\newtheorem{theorem}{Theorem}
\newtheorem{property}[theorem]{Property}
\newtheorem{corollary}[theorem]{Corollary}
\newtheorem{lemma}[theorem]{Lemma}
\newtheorem{proposition}[theorem]{Proposition}
\newtheorem{definition}[theorem]{Definition}
\newtheorem{remark}[theorem]{Remark}
\newtheorem{example}[theorem]{Example}
\newtheorem{conjecture}[theorem]{Conjecture}
\newcommand{\mfc}{{\mathfrak c}}
\newcommand{\mfa}{{\mathfrak a}}
\def \sh {\text{sh}}
\def \tS {\tilde S^0_n}
\def \kcocharge {k\text{-cocharge}}
\def \ncocharge {n\text{-cocharge}}
\def \ZZ {\mathbb Z}
\def \Z {\mathbb Z}
\def \CC {{\mathbb C}}
\def \Co {{\mathcal C^n}}
\def\kbnd {\mathfrak p}
\def\core {\mathfrak c}
\def \core {{\mathfrak c}}
\def\Gr {\text{Gr}}
\def\weight{ {\rm {weight}}}
\def\cocharge{ {\rm {cocharge}}}
\edef\savecatcodeat{\the\catcode`@}
\def\@IFNEXTCHAR#1#2#3{\let\@tempe #1\def\@tempa{#2}\def\@tempb{#3}\futurelet
    \@tempc\@IFNCH}
\def\@IFNCH{\ifx \@tempc \@sptoken \let\@tempd\@xifnch
      \else \ifx \@tempc \@tempe\let\@tempd\@tempa\else\let\@tempd\@tempb\fi
      \fi \@tempd}
\def\tb@ifSpecChars#1#2{#1}
\def\tb@ifNoSpecChars#1#2{#2}
\def\tableau{%
  \bgroup
  \@ifstar{\let\Tif\tb@ifNoSpecChars\tb@tableauB}
          {\let\Tif\tb@ifSpecChars\tb@tableauB}}
\def\tb@tableauB{%
  \@IFNEXTCHAR[{\tb@tableauC}{\tb@tableauC[]}}     
\def\tb@tableauC[#1]{\hbox\bgroup%
    \let\\=\cr
    \def\bl{\global\let\tbcellF\tb@cellNF}%
    \def\tf{\global\let\tbcellF\tb@cellH}
%
    \dimen2=\ht\strutbox \advance\dimen2 by\dp\strutbox%
    \ifx\baselinestretch\undefined\relax%
    \else%
       \dimen0=100sp \dimen0=\baselinestretch\dimen0%
       \dimen2=100\dimen2 \divide\dimen2 by\dimen0%
    \fi%
    \let\tpos\tb@vcenter
    \tb@initYoung
    \tb@options#1\eoo
    \let\arrow\tb@arrow%
    \dimen0=\Tscale\dimen2%
    \dimen1=\dimen0 \advance\dimen1 by \tb@fframe%
    \lineskip=0pt\baselineskip=0pt
%
    \def\tb@nothing{}%
    \def\endcellno{$\rss\egroup\bss\egroup}
    \def\endcell{\endcellno\kern-\dimen0}
    \def\begincell{\vbox to\dimen0\bgroup\vss\hbox to\dimen0\bgroup\hss$}%
    \let\overlay\tb@overlay%
    \let\fl\tb@fl%
    \let\fr\tb@fr%
    \let\lss\hss\let\rss\hss\let\tss\vss\let\bss\vss
    \def\mkcell##1{
        \let\tbcellF\tb@cellD
        \def\tb@cellarg{##1}
        \ifx\tb@cellarg\tb@nothing\let\tb@cellarg\tb@cellE\fi%
%
	        \begincell\tb@cellarg\endcellno
	        \tbcellF
    }%
    \let\savecellF\tbcellF
     \Tif{\catcode`,=4\catcode`|=\active}{}\tb@tableauD}%
\let\tb@savetableauD\tableauD
\gdef\tableauD#1{%
  \Tif{
    \mathcode`|="8000 \mathcode`*="8000%
    \mathcode`~="8000 \mathcode`@="8000%
    \def@{\bullet}%
    \let|\cr
    \let*\tf
    \let~\sk
  }{}%
  \tpos{\tabskip=0pt\halign{&\mkcell{##}\cr#1\crcr}}%
  \global\let\tbcellF\savecellF
  \egroup
  \egroup}
\let\tb@tableauD\tableauD
\let\tableauD\tb@savetableauD
\let\tb@savetableauD\undefined
\def\tb@options#1{\ifx#1\eoo\relax\else\tb@option#1\expandafter\tb@options\fi}
\def\tb@option#1{%
  \if#1t\let\tpos\tb@vtop\fi
  \if#1c\let\tpos\tb@vcenter\fi
  \if#1b\let\tpos\vbox\fi
  \if#1F\tb@initFerrers\fi
  \if#1Y\tb@initYoung\fi
  \if#1E\tb@initEmpty\fi
  \if#1s\tb@initSmall\fi
  \if#1m\tb@initMedium\fi
  \if#1l\tb@initLarge\fi
  \if#1p\tb@initPartition\fi
  \if#1a\tb@initArrow\fi
}
\def\tb@vcenter#1{\ifmmode\vcenter{#1}\else$\vcenter{#1}$\fi}
\def\tb@vtop#1{\hbox{\raise\ht\strutbox\hbox{\lower\dimen0\vtop{#1}}}}
\def\tb@initPartition{\def\Tscale{.3}}
\def\tb@initSmall{\def\Tscale{1}}
\def\tb@initMedium{\def\Tscale{2}}
\def\tb@initLarge{\def\Tscale{3}}
\def\tb@initArrow{\dimen2=1.25em}
\def\tb@initYoung{%
  \def\tb@cellE{}
  \let\tb@cellD\tb@cellN
  \def\sk{\global\let\tbcellF\tb@cellNF}}
\def\tb@initFerrers{%
  \def\tb@cellE{\bullet}
  \let\tb@cellD\tb@cellNF
  \def\sk{\bullet}}
\def\tb@initEmpty{%
  \def\tb@cellE{}
  \let\tb@cellD\tb@cellNF
  \def\sk{\global\let\tbcellF\tb@cellNF}}
\def\tb@sframe#1{%
  \vbox to0pt{
    \vss
    \hbox to0pt{%
      \hss
      \vbox to\dimen1{
        \hrule depth #1 height0pt
        \vss
        \hbox to\dimen1{
          \vrule width #1 height\dimen1
          \hss
          \vrule width #1
          }%
        \vss
        \hrule height #1 depth 0in
        }%
      \kern-\tb@hframe
      }%
    \kern-\tb@hframe}}
\def\tb@hframe{.2pt}\def\tb@fframe{.4pt}\def\tb@bframe{2pt}
\def\tb@cellH{\tb@sframe{\tb@bframe}}       
\def\tb@cellNF{}                            
\def\tb@cellN{\tb@sframe{\tb@fframe}}       
\let\tbcellF\tb@cellN                       
\def\tb@Fsframe{%
  \vbox to0pt{
    \vss
    \hbox to0pt{%
      \hss
      \vbox to\dimen1{
        \fr@iftop{\hrule depth \fr@width height0pt}{\vskip \fr@width}
        \vss
        \hbox to\dimen1{
	  \fr@ifleft{\vrule width \fr@width height\dimen1}{\hskip \fr@width}
          \hss
          \fr@ifright{\vrule width \fr@width height\dimen1}{\hskip \fr@width}
          }%
        \vss
        \fr@ifbottom{\hrule height \fr@width depth 0in}{\vskip\fr@width}
        }%
      \kern-\tb@hframe
      }%
    \kern-\tb@hframe}}
\def\tb@fr{\@IFNEXTCHAR[{\tb@fra}{\global\let\tbcellF\tb@cellN}}
\def\tb@fra[#1]{%
	\global\let\fr@iftop\tb@IFNO
	\global\let\fr@ifbottom\tb@IFNO%
	\global\let\fr@ifleft\tb@IFNO%
	\global\let\fr@ifright\tb@IFNO%
	\global\let\fr@width\tb@fframe%
	\global\let\tbcellF\tb@Fsframe%
	\froptions#1\eoo
}
\def\froptions#1{\ifx#1\eoo\relax\else\froption#1\expandafter\froptions\fi}
\def\froption#1{
	\if#1t\global\let\fr@iftop\tb@IFYES\fi
	\if#1b\global\let\fr@ifbottom\tb@IFYES\fi
	\if#1l\global\let\fr@ifleft\tb@IFYES\fi
	\if#1r\global\let\fr@ifright\tb@IFYES\fi
	\if#1w\global\let\fr@width\tb@bframe\fi
}
\def\tb@IFYES#1#2{#1}
\def\tb@IFNO#1#2{#2}
\def\tb@rpad{1pt}
\def\tb@lpad{1pt}
\def\tb@tpad{1.8pt}
\def\tb@bpad{1.8pt}
\def\tb@overlay{\endcell\@IFNEXTCHAR[{\tb@overlaya}{\begincell}}
\def\tb@overlaya[#1]{\vbox to\dimen0\bgroup%
  \tb@overlayoptions#1\eoo%
  \tss\hbox to\dimen0\bgroup\lss$}
\def\tb@overlayoptions#1{\ifx#1\eoo\relax\else\tb@overlayoption#1\expandafter\tb@overlayoptions\fi}
\def\tb@overlayoption#1{
  \if#1t\def\tss{\vskip\tb@tpad}\let\bss\vss\fi
  \if#1c\let\tss\vss\let\bss\vss\fi
  \if#1b\def\bss{\vskip\tb@bpad}\let\tss\vss\fi
  \if#1l\def\lss{\hskip\tb@lpad}\let\rss\hss\fi
  \if#1m\let\lss\hss\let\rss\hss\fi
  \if#1r\def\rss{\hskip\tb@rpad}\let\lss\hss\fi
}
\def\tb@fl{\endcell\begincell\vrule depth 0pt width \dimen0 height \dimen0 \endcell\begincell}
\def\tbgobble#1{}
\def\Pscale{1}
\def\skewptn{%
  \@IFNEXTCHAR[{\tb@ptnC}{\tb@ptnC[]}}     
\def\tb@ptnC[#1](#2){%
	{%
    \let\Tscale\Pscale
    \let\\=\cr
   \def\tb@initYoung{%
	\def\tb@cell{\hskip\dimen0\tb@cellN}%
	\def\tb@kernA{\kern.5\dimen0}%
	\def\tb@kernB{\kern-.5\dimen0}%
   }%
   \def\tb@initFerrers{%
	\def\tb@cell{\hbox to\dimen0{\hss$\bullet$\hss}}%
	\def\tb@kernA{}%
	\def\tb@kernB{}%
   }%
%
    \dimen2=\ht\strutbox \advance\dimen2 by\dp\strutbox%
    \ifx\baselinestretch\undefined\relax%
    \else%
       \dimen0=100sp \dimen0=\baselinestretch\dimen0%
       \dimen2=100\dimen2 \divide\dimen2 by\dimen0%
    \fi%
    \let\tpos\tb@vcenter
    \tb@initYoung
    \tb@options#1\eoo
    \dimen0=\Tscale\dimen2%
    \dimen1=\dimen0 \advance\dimen1 by \tb@fframe%
    \lineskip=0pt\baselineskip=0pt
    \tpos{\skewptnDnewline#2|)}%
	}%
}%
\def\skewptnDnewline#1|{\vbox to\dimen0\bgroup\vss\tb@kernA\hbox\bgroup\skewptnEon#1,|}
\def\skewptnDendline|{\egroup\tb@kernB\vss\egroup\@IFNEXTCHAR{)}{\tbgobble}{\skewptnDnewline}}
\def\skewptnEon#1,{%
	\tb@rpN=#1%
	\ifnum#1>0
	        \loop%
		\tb@cell%
	        \ifnum\tb@rpN>1\advance\tb@rpN by-1%
        	\repeat%
	\fi%
	\@IFNEXTCHAR{|}{\skewptnDendline}{\skewptnEoff}}
\def\skewptnEoff#1,{\hskip #1\dimen0%
	\@IFNEXTCHAR{|}{\skewptnDendline}{\skewptnEon}}
\let\savecatcodeat\undefined
\begin{document}
\newcommand{\apnd}{\mathcal{A}}
\newcommand{\rbar}{\bar{R_1}}
\newcommand{\boldx}{\textbf{x}}
\newcommand{\HRule}{\rule{\linewidth}{0.5mm}}
\newcommand{\emptbx}{\mbox{}}
\newcommand{\filbx}{\rule[-0.45mm]{4mm}{4mm}}

\begin{center}
{\Large Quantum and affine Schubert calculus and
Macdonald polynomials}

\bigskip
\bigskip

{\large Avinash J. Dalal
\footnote{Supported by the NSF grants DMS-1001898, DMS--1301695.}
 and Jennifer Morse
\footnote{Supported by the NSF grants DMS-1001898, DMS--1301695, and a Simons Fellowship.}
}

\bigskip

{\it Drexel University

Department of Mathematics

Philadelphia, PA 19104 }
\end{center}


\begin{abstract} 
We definitively establish that the theory of symmetric Macdonald polynomials 
aligns with quantum and affine Schubert calculus using 
a discovery that distinguished weak chains can be identified by
chains in the strong (Bruhat) order poset on the type-$A$ affine Weyl 
group.  We construct two one-parameter families of functions that 
respectively transition positively with Hall-Littlewood and Macdonald's 
$P$-functions, and specialize to the representatives for Schubert classes 
of homology and cohomology of the affine Grassmannian. 
Our approach leads us to conjecture that
all elements in a defining set of 3-point genus 0 
Gromov-Witten invariants
for flag manifolds can be formulated as strong covers.
\end{abstract}

\section{Introduction}

The Macdonald polynomial basis for the ring $\Lambda$ of symmetric functions 
is found at the root of many exciting projects spanning topics
such as double affine Hecke algebras, quantum relativistic 
systems, diagonal harmonics, and Hilbert schemes on points in the
plane.
The transition matrix between Macdonald and Schur functions
has been intensely studied from a combinatorial, representation theoretic, 
and algebraic geometric perspective since the time Macdonald conjectured 
\cite{[M2]} that the 
coefficients in the expansion
\begin{equation}
\label{macdointro}
H_\mu(x;q,t) = \sum_\lambda K_{\lambda\mu}(q,t) \,s_\lambda
\end{equation}
are positive sums of monomials
in $q$ and $t$ -- that is, $K_{\lambda\mu}(q,t)\in\mathbb N[q,t]$.

The Kostka-Foulkes polynomials are the special case  $K_{\lambda\mu}(0,t)$.
These appear in contexts such as Hall-Littlewood polynomials
\cite{Green}, affine Kazhdan-Lusztig theory \cite{Lu}, and affine tensor
product multiplicities~\cite{NY:1997}. Kostka-Foulkes 
polynomials also encode the dimensions of bigraded $S_n$-modules \cite{GP}.  
They were combinatorially characterized by Lascoux and Sch\"utzenberger 
\cite{LSfoulkes} who associated a non-negative integer statistic 
called $\cocharge$ to each semi-standard Young tableau and proved
that
\begin{equation}
\label{chargeintro}
K_{\lambda\mu}(0,t)=
\sum_{T\in SSYT(\lambda,\mu)}
t^{\cocharge(T)}
\,,
\end{equation}
summing over tableaux of shape $\lambda$ and weight $\mu$.
Despite the prevelance of concrete results for the $K_{\lambda\mu}(0,t)$, 
it was a big effort even to establish 
polynomiality for general $K_{\lambda\mu}(q,t)$ 
and the geometry of Hilbert 
schemes was ultimately needed to prove positivity \cite{Haiman}.
A formula in the spirit of \eqref{chargeintro} still remains a complete mystery.

One study of Macdonald polynomials \cite{LLM} uncovered
a manifestly $t$-Schur positive construction for polynomials 
$A_\mu^{(k)}(x;t)$, conjectured to be a basis
for the subspace $\Lambda_{(k)}^t$ 
in a filtration $\Lambda^{t}_{(1)}\subset\Lambda^{t}_{(2)}\subset\dots
\subset\Lambda^t_{(\infty)}$ of $\Lambda$ with the
compelling feature 
that for every partition $\mu$ where $\mu_1\leq k$,
$$
H_\mu(x;q,t)=\sum_{\lambda:\lambda_1\leq k} 
K_{\lambda\mu}^k(q,t)\,A_\lambda^{(k)}(x;t)\,
\quad\text{for some}\quad K_{\lambda\mu}^k(q,t)\in\mathbb N[q,t]\,.
$$
Assuming this conjecture,
since $A_\lambda^{(k)}(x;t)$ is a $t$-positive sum of
Schur functions,  the Macdonald/Schur transition matrices factor
over $\mathbb N[q,t]$.  The construction of $A_\mu^{(k)}(x;t)$ 
is extremely intricate and the conjectures remain unproven as a
consequence.  Nevertheless, their study inspired discoveries 
in representation theory \cite{HaChen} and suggested a connection 
between the theory of Macdonald polynomials and quantum and affine 
Schubert calculus.

The affine Grassmannian of $G=SL(n,\CC)$ is given by 
$\Gr=G(\CC((t)))/G(\CC[[t]])$, where $\CC[[t]]$ is the ring of formal 
power series and $\CC((t))=\CC[[t]][t^{-1}]$ is the ring of formal Laurent
series.  Quillen (unpublished) and Garland and Raghunathan~\cite{GRa} 
showed that $\Gr$ is homotopy-equivalent to the group 
$\Omega\, S\!U(n,\mathbb{C})$ of based loops into $S\!U(n,\mathbb{C})$.  
The homology $H_*(\Gr)$ and cohomology $H^*(\Gr)$ thus
have dual Hopf algebra structures which, using results of \cite{Bott},
can be explicitly identified by a subring $\Lambda^{(n)}$ and a quotient 
$\Lambda_{(n)}$ of $\Lambda$.

On one hand, the algebraic nil-Hecke ring construction of Kostant and 
Kumar~\cite{KK1} and the work of Peterson~\cite{Pet} developed
the study of Schubert bases associated to Schubert cells of 
$\Gr$ in the Bruhat decomposition of $G(\CC((t)))$,
\begin{equation*}
  \{\xi^w\in H^*(\Gr, \Z) \mid w\in \tS\} \quad
\text{and}\quad
  \{\xi_w\in H_*(\Gr, \Z) \mid w\in \tS\}
\,,
\end{equation*}
indexed by Grassmannian elements of the affine Weyl group $\tilde A_{n-1}$.
On the other, inspired by an empirical study of the polynomials 
$A_\lambda^{(k)}(x;t)$ when $t=1$, distinguished bases for $\Lambda^{(n)}$ 
and $\Lambda_{(n)}$ that refine the Schur basis for $\Lambda$ were 
introduced and connected to the quantum cohomology of Grassmannians 
in \cite{[LMhecke],[LMproofs]}.  The two approaches merged when
Lam proved in \cite{Lam} that these {\it $k$-Schur bases}
are sets of representatives for the Schubert classes of $H^*(\Gr)$ 
and $H_*(\Gr)$ (where $k=n-1$).  
Moreover, the $k$-Schur functions 
for $\Lambda_{(k)}=\Lambda_{(k)}^{t=1}$ were conjectured 
\cite{LMcore} to be the parameterless $\{A_\lambda^{(k)}(x;1)\}$,
suggesting a link from the theory of Macdonald polynomials to
quantum and affine Schubert calculus.

Here, we circumvent the problem that the characterization 
for $A_\mu^{(k)}(x;t)$ lacks in mechanism for proof and
definitively establish this link.
Our work relies on 
a remarkable connection between chains in the strong and the weak 
order poset on the type-$A$ affine Weyl group.
From this, we are able to construct one 
parameter families of symmetric functions that transition
positively with $H_\mu(x;0,t)$ and Macdonald's $P$-functions
and that specialize to the Schubert representatives for $H^*(\Gr)$ 
and $H_*(\Gr)$ when $t=1$.   The same approach leads also
to a strong-cover formulation for all elements in a 
defining set of Gromov-Witten invariants for flag manifolds.

Our presentation begins with a fresh look at the product structure 
of $H^*(\Gr)$ and $H_*(\Gr)$.  The ring is determined by 
multiplication of an arbitrary class with a simple class.  Explicit Pieri 
rules for these products were given in \cite{[LMproofs],[LLMS]};
the homology rule is framed using saturated chains in the weak order 
on elements in $\tilde A_{n-1}$ and the cohomology rule is in terms of 
strong order saturated chains.  We distinguish a subset of these 
strong chains by imposing a translation and a horizontality condition 
on Ferrers shapes.  We prove that this subset newly characterizes the 
homology rule, providing a cohesive framework for the 
structure of $H^*(\Gr)$ and $H_*(\Gr)$.
For $w\in \tS$ and $c_{0,m} = s_{m-1}\cdots s_1s_0$,
\begin{equation*}
\xi_{c_{0,m}}\, \xi_w = \sum_{u\in\tS} \xi_{u},
\end{equation*}
where $(\mfa(w),\mfa(u))$ is a horizontal strong $(n-1-m)$-strip.
In essence, a horizontal strong strip is a 
saturated chain in the Bruhat order from $u$ to the
translation of $w$ by $s_{x-1}\cdots s_{x+1}$
(see Definition~\ref{hstrongstrip}).

From the horizontal strong strips, we derive a new combinatorial tool 
called affine Bruhat countertableaux (or $ABC$'s).   We prove that their 
generating functions are representatives for the Schubert basis of $H^*(\Gr)$
and by associating a non-negative integer statistic called $\ncocharge$ 
to each $ABC$, we refine the Kostka-Foulkes polynomials.  
The family of {\it weak Kostka-Foulkes polynomials} are defined,
for partitions $\mu$ and $\lambda$ with parts smaller than $n$,
by
\begin{equation}
\label{kkf}
K_{\lambda\mu}^{n}(t)=\sum_{A\in ABC(\lambda,\mu)}
t^{\ncocharge(A)}\,,
\end{equation}
summing over all $ABC$'s of shape $\lambda$ and weight $\mu$.  
A new family of symmetric functions over $\mathbb Q(t)$ that reduces 
to Schubert representatives for the cohomology of $\Gr$ when $t=1$
can then be drawn by
\begin{equation}
\label{tinvdefintro}
\mathfrak S_\lambda^{(n)}(x;t) = 
\sum_{\mu} K_{\lambda\mu}^{n}(t)\, P_\mu(x;t)
\,,
\end{equation}
where $\{P_\mu(x;t)\}$ are Macdonald's $P$-functions.
A basis that reduces to the Schubert representatives
for $H_*(\Gr)$ when $t=1$ and whose transition matrix 
with Macdonald polynomials $H_\mu(x;0,t)$ has entries in 
$\mathbb N[t]$ is then given by the dual basis
$$
\left\{s_\lambda^{(n)}(x;t)\right\}\,,
$$
with respect to the Hall-inner product on $\Lambda$.
These are conjecturally the $A_\lambda^{(n-1)}(x;t)$.

Another advantage of the strongly formulated homology rule is that it allies 
with the combinatorial backdrop of quantum Schubert calculus.  
The quantum cohomology ring of the complete flag manifold
$FL_n$ (chains of vector spaces in $\mathbb C^n$)
decomposes into Schubert cells indexed by permutations $w\in S_n$.
As a linear space, the quantum cohomology is 
$QH^*(FL_n)= H^*(FL_n)\otimes\mathbb Z[q_1,\ldots,q_{n-1}]$
for parameters $q_1,\ldots, q_{n-1}$ and the appeal
lies in its rich multiplicative structure.
The quantum product
\begin{equation}
\label{qschubintro}
\sigma_u\,*\,\sigma_v = \sum_w \sum_{d} 
q_1^{d_1} \dots q_{n-1}^{d_{n-1}}\,
 \langle u,v,w\rangle_d
\, \sigma_{w_0w}
\end{equation}
is defined by 
3-point Gromov-Witten invariants of genus 0 which count
equivalence classes of certain rational curves in $FL_n$.
The study of Gromov-Witten invariants is ongoing.
Many attempts to gain direct combinatorial access to the structure 
constants have been made, but
formulas are still being pursued even in the simplest case 
when $q_1=\cdots=q_{n-1}=0$.  In this case, if $u$ and $v$ are 
permutations with one descent, the invariants reduce to the 
well-understood Littlewood-Richardson coefficients \cite{LR,SchutzLR}.  
For generic $u$ and $v$ in $S_n$, the invariants are the structure 
constants of Schubert polynomials \cite{LSschub}, mysterious
even when $u$ has only one descent.

Although the construction is not manifestly positive,
all the Gromov-Witten invariants of \eqref{qschubintro} can be 
calculated from the subset
\begin{equation}
\label{simplegwintro}
\left\{
\langle s_r,v,w \rangle_d
: 1\leq r<n\;\text{and}\;v,w\in S_n
\right\}
\,.
\end{equation}
Fomin, Gelfand, and Postnikov \cite{FGP} use quantum Schubert polynomials to
characterize this set as a generalization of Monk's formula.
%
Here, we approach the study by way of the affine Grassmannian.
Peterson asserted that $QH^*(G/P)$ of a flag variety is a quotient 
of the homology $H_*(\Gr_G)$ up to localization (detailed and proven 
in \cite{LStoda}). As a by-product,
the three-point genus zero Gromov-Witten invariants \eqref{qschubintro}
are structure constants of the Schubert basis for $H_*(\Gr)$.
A precise identification of $\langle u,v,w\rangle_d$ 
with coefficients $c_{\mu,\lambda}^\nu$ in
\begin{equation}
\xi_\mu\,\xi_\lambda  = \sum_{\nu}
c_{\mu,\lambda}^\nu\,\xi_\nu
\end{equation}
is made in \cite{LM:flag} (where $\mu,\nu,\lambda$ are
certain Ferres shapes associated to elements of $\tS$) and the
defining set \eqref{simplegwintro} of Gromov-Witten invariants 
is determined to be  a subset of
$$
\{c_{R_r',\lambda}^\nu :\;1\leq r<n\;\text{and}\;\lambda,\nu\in\Co \}\,,
$$
where $R_r'$ is the rectangular Ferrers shape $(r^{n-r})$ with its 
unique corner removed.  

In this context, the set can conjecturally be characterized
simply as strong covers under a rectangular translation;
that is,
$$
\xi_{R_r'} \,\xi_{\lambda} = \sum_{\nu\lessdot_B R(r,\lambda)}
\xi_{\nu} \,,
$$
where $\nu_i<R(r,\lambda)_i$ for some $i$ such that
$(\lambda\cup R_r)_i=r$
and the $q$-parameters are readily extracted from the shape $\nu$.
We extend the definition of horizontal strong strips to a
larger distinguished subset of strong order chains characterized
by a condition involving ribbon shapes (see Definition~\ref{rstrongstrip}).
The ribbon strong strips are inspired by the expansion of 
$\xi_{\mu}\xi_{\lambda}$, where $\mu$ is $(r^{n-r-1},r-a)$ for $1\leq a<r<n$.

Another motivation for this approach is
in its application to an open problem in the study of 
$H^*(\Gr)$.  The problem of expanding a Schubert homology class in the 
affine Grassmannian of $G=SL_{n-1}$ into Schubert homology classes in 
$\Gr$ was settled in \cite{[LLMS2]}, but the cohomological 
picture requires a deeper understanding of the intricacies of strong
strips.  The ribbon strong strips point the study towards
converting between weak and strong chains so that the existing work
on the homology problem can be applied to the cohomology problem
(see \cite{[LLMS2]} for details).

\section{Related work}

The parameterless $k$-Schur function structure constants
contain all the Schubert structure constants in the quantum 
cohomology of flag varieties \cite{LStoda,LM:flag}.  The
search for formulas for these constants is tied to many exciting projects.  

The quantum comology of the Grassmannian can be accessed \cite{BKT} from 
the ordinary cohomology of two-step flags, in which case the Schubert 
structure constants can be computed by an iterative algorithm of Coskun 
\cite{Cos} or by Knutson-Tao puzzles \cite{KnutsonTao:2003} (proved 
in forthcoming work \cite{BKPT,Buch:puz}).  The constants also match 
the structure constants of the Verlinde fusion alegbra for WZW models 
\cite{Verlinde:1988,[TUY]},
efficiently computed by the Kac-Walton formula \cite{[Kac],[Wa]} 
and combinatorially attempted by \cite{[Tu],KoSt,MS:2012} among others.

Formulas in the quantum cohomology of flag varieties have been 
derived only in special cases such as the quantum Monk formula 
\cite{FGP} and quantum Pieri formula \cite{Po:qPieri}.  These 
special constants were connected in \cite{LM:flag} to the $k$-Schur 
expansion of $s_{\mu}^{(k)}s_\lambda^{(k)}$, where $\mu$ is a rectangle 
minus part of a row.  The $k$-Pieri rule was given in \cite{[LMproofs]} 
and a more general result appears in \cite{BBPZ,BSS,BSSgen}. 
The problem currently excites many perspectives including the 
Fomin-Kirillov algebra \cite{MPP}, the affine nil-Coxeter
algebra \cite{BBTZ}, Fomin-Greene monoids \cite{BB},
residue tables \cite{FishK}, and crystal bases \cite{MS}.

The inclusion of a generic $t$-parameter has so far been met with 
limited success.  Most notably, Lapointe and Pinto \cite{LapointePinto} 
introduced a statistic on weak tableaux and proved that it matches the
weight on the poset in \cite{[LLMS2]} that describes the expansion of 
a Schubert homology class in $\Gr$ into Schubert homology classes 
in the affine Grassmannian of $G=SL(n+1,\CC)$.  In \cite{DM}, we 
prove that these match the statistic on $ABC$'s.  
Closely related is work expressing $k$-Schur functions in terms 
of Kirillov-Reshetikhin crystals for type $A_n$ \cite{MS}.

Jing \cite{Jing} introduced vertex operators $B_r$ with the property that
$$
B_r H_\mu(x;0,t)= H_{r,\mu}(x;0,t)\,.
$$
These play a central role \cite{HMZ} in the developing theory of diagonal harmonics
\cite{GHcat}.  Zabrocki \cite{Zab:thesis} determined the action of $B_r$ on 
a Schur function, giving a new proof of the $\cocharge$ formulation
for Kostka-Foulkes polynomials.  In fact, our approach to the homology Pieri 
rule using the strong instead of the weak order came out of a study of 
his work and the action of $B_r$ on a $k$-atom $A_\lambda^{(k)}(x;t)$.
A deeper understanding of the operators will shed light on 
open problems in diagonal harmonics and their connection to
affine Schubert calculus.

\section{Preliminaries}

The type-$A$ affine Weyl group is realized as the 
{\it affine symmetric group} $\tilde S_{n}$ given by 
generators $\{s_0,s_1,\ldots,s_{n-1}\}$ satisfying
the relations
\begin{eqnarray*}
& s_i^2 = 1,\nonumber\\
& s_i s_{i+1} s_i = s_{i+1} s_i s_{i+1}, \label{eq:affsymgens}\\
& s_i s_j = s_j s_i\quad \hbox{ for }i-j \not\equiv 1,n-1 \pmod{n}\nonumber
\end{eqnarray*}
with all indices related $\text{mod }n$.  If 
$w=s_{i_1}\cdots s_{i_\ell}\in \tilde S_n$ and
$\ell$ is minimal among all such expressions for $w$,
then $s_{i_1}\cdots s_{i_\ell}$ is called a reduced word
for $w$ and the length of $w$ is defined by $\ell(w)=\ell$.
The weak order on $\tilde S_n$ is defined by 
the covering relation
$$
w\lessdot z\iff
z=s_iw \;\text{and}\; \ell(z)=\ell(w)+1\,.
$$

Alternatively, $\tilde S_n$ is the group of permutations
of ${\mathbb Z}$ with the property that $w\in {\tilde S}_n$ acts by
$w( i + rn ) = w(i) + rn$, for all $r \in \ZZ$ and
$\sum_{i=1}^n w(i)=\binom{n+1}{2}$.
For $0\le i< n$, the elements $s_i \in {\tilde S}_{n}$ 
act on $\ZZ$ by $s_i( i+rn ) = i+1+rn$, $s_i( i+1+rn ) = i+rn$, and $s_i( j ) = j$ 
for
$j \not\equiv i,i+1 \pmod{n}$.  

Although the simple reflections
$s_i$ generate the group, there is also the notion of a transposition 
$\tau_{i,j}$ defined by its action $\tau_{i,j}( i +rn ) = j + rn$
and $\tau_{i,j}( \ell ) = \ell$ for $\ell \not\equiv i,j~\pmod{n}$.
Take integers $i < j$ with $i \not\equiv j \pmod{n}$ 
and $v = \lfloor (j-i)/n \rfloor$, then $\tau_{i,i+1} = s_i$ and for
$j-i>1$,
\[
\tau_{i,j} = s_i s_{i+1} s_{i+2} \cdots s_{j-v-2} s_{j-v-1} s_{j-v-2} s_{j-v-3} \cdots s_{i+1} s_i
\]
where the indices of simple reflections are taken $\text{mod } n$.  
For $j>i$, we set $\tau_{i,j} = \tau_{j,i}$. 
The strong (Bruhat) order is defined by the covering relation
\begin{equation}
w \lessdot_B u \quad\text{when} \quad u=\tau_{r,s} w
\quad\text{and} \quad \ell(u)=\ell(w)+1\,.
\end{equation}

The symmetric group $S_{n}$ can be viewed as the parabolic subgroup
of $\tilde S_n$ generated by $\{ s_1, s_2, \ldots, s_{n-1} \}$.
The left cosets of ${\tilde S}_{n} / S_{n}$ are called
\textit{affine Grassmannian elements} and they are 
identified by the set $\tilde S_n^0\subset \tilde S_n$
of minimal length coset representatives.
Elements of $\tS$ can be conveniently represented by 
a subset of partitions.
A partition is a non-decreasing vector $\lambda=(\lambda_1,\ldots, \lambda_n)$ 
of positive integers and it is identified by its (Ferrers) shape having 
$\lambda_i$ lattice squares in the $i^{th}$ row, 
from the bottom to top.  For partitions $\lambda$ and $\mu$, 
$\mu \subset \lambda$ when $\mu_i \leq \lambda_i$.  
Given $\mu\subset\lambda$, the skew shape $\lambda/\mu$ is 
defined by deleting the cells of $\mu$ from $\lambda$.
When there is at most one cell in any column of 
$\lambda/\mu$, this skew shape with $m$ cells is called
a \textit{horizontal $m$-strip}.
The cell of a horizontal 1-strip $\lambda/\mu$ is called a
\textit{removable corner} of $\lambda$ and an \textit{addable
corner} of $\mu$.  A cell $(i,j)$ of a partition
$\lambda$ with $(i+1,j+1) \not \in \lambda$ is called an \textit{extremal cell}.

It is the subset of shapes $\mathcal C^n$ called $n$-cores that are 
in bijection with affine Grassmannian permutations.
An \textit{$n$-core} is a partition 
that has no cell whose hook-length is $n$ (the hook-length
of cell $c=(i,j)$ is $\lambda_i-i+1$ plus
the number of cells in colum $j$ above cell $c$).
The \textit{content} of $c$ is $j-i$ and its \textit{$n$-residue}
is $j-i \pmod n$.  
There is a left action of the generator $s_i\in \tilde S_{n}$
on an $n$-core $\lambda$ with at least one addable corner of 
residue $i$; it is defined by letting $s_i \lambda$ be the shape 
where all corners of residue $i$ have been added to $\lambda$.
This extends to a bijection \cite{LasOrder,LMcore}
$$
\mfa: \tS \longrightarrow \mathcal C^n \,,
$$
where $\lambda=\mfa(w)=s_{i_1}\cdots s_{i_\ell}\emptyset$
for any reduced word ${i_1}\cdots {i_\ell}$ of $w$.
We use $w_\lambda$ to denote the preimage of $\lambda$
under $\mfa$.  From this, we define the {\it degree} of the
$n$-core $\lambda$, $deg(\lambda)$,  to be $\ell=\ell(w_\lambda)$.
An $n$-core $\lambda$ has an addable corner of residue $i$
if and only if 
\begin{equation}
\label{addres}
\ell(w_{s_i\lambda})=
\ell(w_\lambda)+1 \,.
\end{equation}

\begin{property}
\cite{LMcore}
\label{thecoreprop}
For an $n$-core $\lambda$ with extremal cells 
$c$ and $c'$ of the same $n$-residue, given that
$c$ is weakly north-west of $c'$,
if $c$ is at the end of its row, then so is $c'$.
If $c$ has a cell above it, then so does $c'$.
\end{property}

The strong order on the subset $\tS$ is characterized on
elements of $\mathcal C^n$ by the containment of
diagrams and its covering relation is
\begin{displaymath}
  \mu \lessdot_B \lambda \Longleftrightarrow \mu \subset \lambda
\text{ and } deg(\lambda) = deg(\mu)+1\,.
\end{displaymath}
Given a pair $\mu~\lessdot_B~\lambda$, the shape $\lambda/\mu$
can be described by ribbons.
For $r \geq 0$, an \textit{$r$-ribbon} $R$ is a 
skew diagram $\lambda/\mu$ consisting of $r$ rookwise connected 
cells such that there is no $2 \times 2$ shape contained in $R$.  
In a ribbon, the southeasternmost cell is called its \textit{head}
and the northweasternmost cell is its \textit{tail}.
The height of a ribbon is the number of rows it occupies.
\begin{lemma}\cite{[LLMS]}
\label{lem:rib}
If $w\lessdot_B \tau_{r,s}w$ in $\tS$, then the skew 
$\mfa(\tau_{r,s}w)/\mfa(w)$
is made up of copies of one fixed ribbon such that the head of each
copy has residue $s-1$ and the tail has residue $r$.
\end{lemma}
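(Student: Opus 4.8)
The plan is to pass from the affine symmetric group to the abacus (Maya diagram) model of $n$-cores, where the action of $\tau_{r,s}$ becomes transparent, and then read the skew shape off of a single elementary bead move. First I would fix the standard encoding of an $n$-core $\mfa(w)$ as a bi-infinite $0/1$ boundary sequence indexed by contents, arranged so that a cell of content $c$ records an inversion (a bead lying above a gap) and so that the $n$-residue of a cell is its content modulo $n$. In this language the $n$-core condition says precisely that, within each residue class $a \pmod n$, all beads occur at smaller content than all gaps; thus a core is recorded by one cutoff $t_a$ per residue. I would then check that left multiplication by $\tau_{r,s}$ acts by swapping the two arithmetic progressions of contents $\equiv r$ and $\equiv s$, pairing $r+kn$ with $s+kn$; on cutoffs this is the transposition $t_r \leftrightarrow t_s$, which immediately reconfirms that cores go to cores.

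Next I would translate the hypothesis $w \lessdot_B \tau_{r,s}w$. By the characterization of strong order on $\Co$ recalled above, this cover gives $\mfa(w)\subset \mfa(\tau_{r,s}w)$ with $\deg$ increasing by $1$, so the skew is genuinely an addition of cells. The crucial point is that the degree-one (cover) condition is exactly what forces the swap $t_r\leftrightarrow t_s$ to be elementary: all beads of the intermediate residues lying between the relevant content levels must already be balanced, so that the only new inversions are those created as beads of the residue-$s$ progression slide down onto the residue-$r$ progression. Because the swap is performed simultaneously in every period $k$, these new inversions come in families that are exact translates of one another by $n$ in content; this periodicity is the source of the phrase ``copies of one fixed ribbon.''

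I would then identify each family as a translate of a single ribbon and pin down its extreme cells. Tracking which cells are added in one period, the newly created inversions occupy a rookwise-connected strip with no $2\times 2$ block (a ribbon), its northwesternmost cell sitting at the content governed by the residue-$r$ end of the move and its southeasternmost cell at the content governed by the residue-$s$ end; a short content-to-residue computation then yields tail residue $r$ and head residue $s-1$ (the shift by one arising in the passage between bead positions and boundary cells). To guarantee that different periods produce the \emph{same} ribbon, rather than merely congruent skew pieces, I would invoke Property~\ref{thecoreprop}: since the extremal cells bounding the added strip in different periods share a residue, they agree on whether they end a row or carry a cell above, so the runs are genuine $n$-translates of one shape.

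The hard part will be the bookkeeping in the middle step: making ``cover $\Leftrightarrow$ elementary move'' precise and showing that the cover condition rules out any extra, differently shaped ribbons coming from the intermediate residues. An alternative I would keep in reserve is an induction on $s-r$ using the conjugation recursion $\tau_{r,s}=s_{s-1}\,\tau_{r,s-1}\,s_{s-1}$ (or the palindromic expression for $\tau_{r,s}$ recalled before Lemma~\ref{lem:rib}), peeling off one simple reflection at a time and using that $s_i$ adds precisely the addable corners of residue $i$; here Property~\ref{thecoreprop} again supplies the uniformity needed to maintain the single-ribbon form, while the delicate issue becomes controlling the length along the conjugation so that the cover hypothesis is preserved at each stage.
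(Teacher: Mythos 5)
The paper does not prove this lemma; it is imported verbatim from \cite{[LLMS]}, so there is no in-paper argument to measure you against. On its own terms, your abacus strategy is sound and is essentially the standard route. The pieces you have in place are all correct: an $n$-core is recorded by one cutoff per runner, left multiplication by $\tau_{r,s}$ permutes the boundary word by $\tau_{r,s}$ and therefore swaps runners $r$ and $s$ with the pairing $r+kn\leftrightarrow s+kn$, flushness of those two runners forces every changed period to move its bead in the same direction (so the containment $\mfa(w)\subset\mfa(\tau_{r,s}w)$ orients all the moves as additions), and each changed period contributes a connected boundary strip spanning a content window that is an $n$-translate of the others, whence the residues of heads and tails are constant and equal to $s-1$ and $r$ up to the convention shift you already anticipate.

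The step you flag as the hard part is genuinely the crux, and you should not expect it to dissolve: the \emph{shape} of the piece added at period $k$ is governed by the boundary letters at the intermediate positions $r+kn+j$, $0<j<s-r$, and those letters live on the \emph{other} runners. They agree across two changed periods only if no other runner has its bead-to-gap transition inside the relevant window, and that is precisely the affine Bruhat cover criterion for $\ell(\tau_{r,s}w)=\ell(w)+1$. Without the cover hypothesis the conclusion is false: for $n=3$, $w=s_0$ (so $\mfa(w)=(1)$) and $\tau_{0,4}$, the skew $(5,3,1)/(1)$ decomposes into a horizontal $4$-ribbon and a hook-shaped $4$-ribbon, two non-congruent pieces (here $\deg$ jumps by $4$). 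So your proof must convert ``length increases by exactly one'' into ``no interfering transition on any intermediate runner,'' either by the inversion count for affine transpositions or, as you suggest, by applying Property~\ref{thecoreprop} to the extremal cells of equal residue in $\mfa(\tau_{r,s}w)$, which transports the local boundary structure from one copy to the next. Once that is written down the argument closes; I would stay with the abacus version rather than the fallback induction on $s-r$, since conjugating by $s_{s-1}$ need not preserve membership in $\tS$ or the cover condition at intermediate stages.
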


\begin{lemma}
\label{lem:tau}
Given $w\lessdot_B \tau_{r,s}w$ in $\tS$, 
$$
\mfa(\tau_{r,s} w) = \mfa(w) + \,\text{all addable ribbons
with a head of residue $r-1$ and tail of residue $s$}
\,.
$$
\end{lemma}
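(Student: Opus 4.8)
The plan is to obtain Lemma~\ref{lem:tau} from Lemma~\ref{lem:rib} together with the core monotonicity recorded in Property~\ref{thecoreprop}. The first observation is that the two residue descriptions differ only by the symmetry $\tau_{r,s}=\tau_{s,r}$ built into the definition of transpositions: applying Lemma~\ref{lem:rib} to the transposition written as $\tau_{s,r}$ shows that every ribbon in the skew $\mfa(\tau_{r,s}w)/\mfa(w)$ has head of residue $r-1$ and tail of residue $s$. This already gives one inclusion, namely that each connected component created in passing from $\mfa(w)$ to $\mfa(\tau_{r,s}w)$ is an addable ribbon of the type named in the statement, and that all of these components are translates of one fixed ribbon $R_0$. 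What remains is to justify the word \emph{all}: I must show that no addable ribbon of $\mfa(w)$ with head residue $r-1$ and tail residue $s$ is left out, and, en route, that every such addable ribbon is forced to be a copy of $R_0$.

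For the shape, I would argue that an addable ribbon of the prescribed type is completely determined by the outer boundary of $\mfa(w)$ read off between an addable corner of residue $s$ (its tail) and the first admissible cell of residue $r-1$ (its head). Property~\ref{thecoreprop} compares extremal cells of a common residue: scanning from north-west to south-east, the properties ``sits at the end of its row'' and ``has a cell directly above it'' can only turn on, never off. Applying this to the residue classes traversed by the ribbon shows that the boundary profile between consecutive addable corners of residue $s$ repeats, so every addable ribbon of type $(r-1,s)$ is indeed a translate of the single ribbon $R_0$ furnished by Lemma~\ref{lem:rib}.

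For completeness I would argue by contradiction. Suppose some addable ribbon $R$ of type $(r-1,s)$ were \emph{not} among the components of $\mfa(\tau_{r,s}w)/\mfa(w)$; then the addable corner of residue $s$ serving as its tail survives as an addable corner of $\lambda:=\mfa(\tau_{r,s}w)$, whereas the analogous corners carrying the added copies of $R_0$ have been filled. Comparing a filled and an unfilled corner of equal residue through Property~\ref{thecoreprop}, now applied inside the core $\lambda$, produces two extremal cells of the same residue whose ``end of row'' or ``cell above'' types violate the required monotonicity, a contradiction. Conceptually this reflects the $n$-periodicity of a core's boundary in the residue coordinate: since each simple reflection fills \emph{every} addable corner of its residue at once and $\ell(\tau_{r,s}w)=\ell(w)+1$ confines the new cells to a single residue orbit, the reflection cannot fill one residue-$s$ corner while skipping another, so no admissible corner is left behind.

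The routine parts are the opening reduction and the shape determination, both of which are immediate once Property~\ref{thecoreprop} is in hand. I expect the completeness step to be the main obstacle: Lemma~\ref{lem:rib} only characterizes the ribbons that \emph{are} added and says nothing a priori about addable ribbons that are absent, so the real work is to show that the residue pair $(r-1,s)$ together with addability already determines both the shape and the full set of positions. The most delicate bookkeeping will be tracking residues along a ribbon under reduction mod $n$, so as to be certain that the cell I call the head genuinely has residue $r-1$ rather than a neighbouring value, and that every addable corner of residue $s$ that admits such a head is actually used.
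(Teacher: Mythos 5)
Your use of Lemma~\ref{lem:rib} for the containment ``every added component is a ribbon of the stated type'' matches the paper, but the half you correctly identify as the real content --- that \emph{every} addable ribbon of that type actually gets filled --- is where your argument breaks down, and it is exactly the half the paper disposes of by a different, much more direct device. The paper simply writes $\tau_{r,s}=s_rs_{r+1}\cdots s_{s-1}\cdots s_r$ and invokes the fact that the left action of each generator $s_i$ on an $n$-core adds \emph{all} addable corners of residue $i$ simultaneously; since every letter of this word acts uniformly across the whole shape, no admissible position can be filled while another of the same residue is skipped, and Lemma~\ref{lem:rib} then pins down the conclusion. You never actually use this factorization (you only allude to it as a ``conceptual reflection'' at the end), and the substitute you propose does not hold up.

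Concretely, two steps fail. First, you claim Property~\ref{thecoreprop} shows ``the boundary profile between consecutive addable corners of residue $s$ repeats,'' so that all addable ribbons of the given type are translates of one fixed ribbon. Property~\ref{thecoreprop} is a \emph{one-directional monotonicity} --- a property of an extremal cell propagates only from north-west to south-east, never back --- so it cannot yield periodicity of the boundary; in a general $n$-core the extremal cells of a fixed residue do change character as one moves south-east. Second, the completeness step is asserted rather than proved: you say that comparing a filled corner with an unfilled corner of equal residue ``violates the required monotonicity,'' but you never name which extremal cells of $\lambda=\mfa(\tau_{r,s}w)$ are being compared, never verify that they are extremal and of the same residue, and never address the fact that the contradiction must be extracted differently depending on whether the skipped position lies north-west or south-east of a filled one (again because the Property is one-directional). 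A correct argument along your lines amounts to proving that an $n$-core cannot simultaneously carry a removable ribbon and an addable ribbon with the same head and tail residues --- a true statement, but one requiring a careful identification of cells that you have not supplied. Your opening observation that the residues in the statement (head $r-1$, tail $s$) versus those in Lemma~\ref{lem:rib} (head $s-1$, tail $r$) are reconciled through $\tau_{r,s}=\tau_{s,r}$ is reasonable and consistent with the paper's own proof, which silently switches to the latter convention.
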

\begin{proof}
Since $\tau_{r,s} = s_rs_{r+1}\cdots s_{s-1}\cdots s_r$,
any addable ribbon of $\mfa(w)$ with a head of residue $s-1$ and
a tail of residue $r$ is added to $\mfa(w)$ under multiplication
by $\tau_{r,s}$.  The result then follows from Lemma~\ref{lem:rib}.
\end{proof}

%


\section{Affine Pieri rules}

The main discovery is that there is a fundamental connection between 
weak order chains from $u$ to $v$ in $\tS$ and strong order 
chains from $v$ to a translation of $u$.  We start by 
addressing the case that applies to Pieri rules, in which case the
translation of an element $u_\lambda\in\tS$ amounts to 
$s_{x-1}s_{x-2}\cdots s_{x+1}u_\lambda$
where $x=\lambda_1-1\pmod n$.

\subsection{Strong and weak Pieri rules}

The Pieri rules for the $H^*(\Gr)$ and $H_*(\Gr)$ are given in
\cite{[LMproofs],[LLMS]}.  
The affine homology rule is framed
using saturated chains in the weak order on $\tS$, whereas
the cohomology rule is in terms of strong order saturated chains.

A word $a_1a_2 \cdots a_{\ell}$ with letters in $\mathbb Z/n\mathbb Z$
is called {\it cyclically decreasing} if each letter occurs at most
once and $i+1$ precedes $i$ whenever $i$ and $i+1$ both occur in the word.
An affine permutation is called cyclically decreasing
if it has a cyclically decreasing reduced word.
The affine homology Pieri rule for $H_*(\Gr)$ is given,
for $w\in\tS$ and $c_{0,m} = s_{m-1}\cdots s_1s_0 \in \tS$,
by
\begin{equation}
\label{weakpieri}
\xi_{c_{0,m}}\, \xi_w = \sum_{v} \xi_{vw},
\end{equation}
over all cyclically decreasing $v$ of length $m$ such that $vw\in\tS$
and $\ell(vw)=\ell(w)+m$.  Thus, for $u\in\tS$,
the term $\xi_u$ occurs in the product of $\xi_{c_{0,m}}\xi_w$ 
only when $uw^{-1}$ is cyclically decreasing and 
there is a saturated chain
$$
w=w^{(0)}\lessdot w^{(1)}\lessdot\cdots\lessdot w^{(m)}=u\,.
$$
Alternatively, the rule can be formulated in the language of shapes using
the action of the $s_i$-generators on $n$-cores.
\begin{lemma}\cite{[LLMS]}
\label{lem:cychor}
For $u,w \in \tS$ where $\ell(uw^{-1})=m$, $uw^{-1}$ is cyclically 
decreasing with reduced word $j_1\cdots j_m$ if and only if
$\mfa(u)/\mfa(w)$ is a horizontal strip such that the set of residues
labelling its cells is $\{j_1,\ldots,j_m\}$.
\end{lemma}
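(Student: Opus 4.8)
The plan is to translate everything into the left action of $\tilde S_n$ on $n$-cores, under which $\mfa(w)=w\cdot\emptyset$ and a generator $s_i$ adds (respectively removes) all addable (removable) corners of residue $i$, changing the degree by $+1$ (resp. $-1$). In the situation of the homology Pieri rule the product is length-additive, so I may write $\mfa(u)=s_{j_1}\cdots s_{j_m}\,\mfa(w)$ and compute it by applying the generators right-to-left to $\mfa(w)$, each step adding every addable corner of the current residue. Because each such step strictly raises the degree, it contributes at least one cell, so the residues occurring in the skew $\mfa(u)/\mfa(w)$ are exactly the letters occurring in the word.

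For the forward direction, assume $uw^{-1}=s_{j_1}\cdots s_{j_m}$ is cyclically decreasing and show the added region is a horizontal strip. The crux is a residue computation: a cell $(i,j)$ has residue $j-i\pmod n$, so the cell directly above it has residue one smaller. Thus a violation of the horizontal-strip condition is a pair of \emph{new} cells stacked in one column, the lower of residue $a$ and the upper of residue $a-1\pmod n$; creating both as new cells forces residue $a$ to be added strictly before residue $a-1$, since the upper cell becomes addable only once the lower one is present. But cyclically decreasing means that whenever both $a$ and $a-1$ occur, $a$ precedes $a-1$ in the word, which under right-to-left application adds residue $a-1$ before residue $a$; and if only one of the two residues occurs, the stacked pair cannot arise at all. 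In every case the forbidden configuration is excluded (the cyclic case, lower residue $0$ under upper residue $n-1$, is handled identically), so $\mfa(u)/\mfa(w)$ has at most one cell per column and is a horizontal strip whose residue set is $\{j_1,\dots,j_m\}$.

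For the converse, assume $\mfa(u)/\mfa(w)$ is a horizontal strip and let $S$ be the set of residues of its cells. I would first argue that $S$ is a proper subset of $\mathbb Z/n\mathbb Z$, so that there is a unique cyclically decreasing word $j_1\cdots j_m$ with support $S$, and then prove by induction on $|S|$ that applying the corresponding generators right-to-left to $\mfa(w)$ reproduces $\mfa(u)$ exactly, adding precisely the strip cells of each residue and no others. The structural input is Property~\ref{thecoreprop} on extremal cells of equal residue: together with the interlacing inequalities defining a horizontal strip, it guarantees that at the scheduled step the addable corners of the current residue are exactly the strip cells of that residue, and that processing one residue neither destroys a later strip cell nor exposes a spurious addable corner. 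Matching degrees then yields $uw^{-1}=s_{j_1}\cdots s_{j_m}$.

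The main obstacle is the bookkeeping in the converse: showing the cyclically decreasing schedule adds \emph{exactly} the strip and nothing more. The forward stacking argument only forbids new vertical dominoes, whereas the converse must also certify that no unscheduled addable corner of the current residue lies outside the strip and that every strip cell of that residue is addable when its turn comes. Ruling out these spurious or missing additions, and confirming $S\subsetneq\mathbb Z/n\mathbb Z$, is where the core combinatorics (Property~\ref{thecoreprop} and Lemma~\ref{lem:rib}) must be pushed hardest.
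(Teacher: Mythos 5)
First, a point of reference: the paper does not prove Lemma~\ref{lem:cychor} at all --- it is imported from \cite{[LLMS]} --- so there is no internal proof to compare your argument against; I can only judge it on its own terms.

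Your forward direction is sound and essentially complete. The reduction of a horizontal-strip violation to two vertically adjacent new cells is valid (new cells in a column of a partition form a contiguous block at the top), the residue computation (the cell above a residue-$a$ cell has residue $a-1 \pmod n$) is correct, and the order-of-application contradiction with the cyclically decreasing condition, including the wrap-around case, works. The only small imprecisions are that you should note that each intermediate $s_{j_i}\cdots s_{j_m}w$ remains in $\tS$ so that the core action applies at every step, and that the cyclically decreasing \emph{word} on a support set is unique only up to commutations (the permutation is what is unique).

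The converse, however, contains a genuine gap, which you candidly flag yourself. The two claims you defer --- (i) that the residue set $S$ of the strip is a proper subset of $\mathbb{Z}/n\mathbb{Z}$, and (ii) that applying the generators in the cyclically decreasing schedule adds \emph{exactly} the strip cells of each residue, i.e.\ that no addable corner of the current residue lies outside $\mfa(u)$ and that every strip cell of that residue is addable at its scheduled turn --- are not side conditions; they \emph{are} the content of the converse. Invoking Property~\ref{thecoreprop} by name does not discharge them: that property constrains extremal cells of equal residue in a single $n$-core, and one must still argue that the intermediate shapes are $n$-cores to which it applies, and then derive from the interlacing inequalities of the horizontal strip that the addable corners of residue $j$ at the relevant moment coincide with the strip cells of residue $j$. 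Until that bookkeeping is carried out (or the statement is cited from \cite{[LLMS]} as the paper itself does), the proposal is an accurate plan for a proof rather than a proof.
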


While the weak order determines the affine homology rule
for $H_*(\Gr)$, the affine cohomology Pieri rule 
is given as the sum over certain multisets of chains in the 
Bruhat (strong) order.  The multisets arise by imposing a marking on 
strong covers $\rho\lessdot_B\gamma$ in $\mathcal C^{n}$.  
Define $(\gamma,c)$ to be a {\it marked strong cover} of $\rho$
if $\rho \lessdot_B \gamma$ and $c$ is the content
of the head of a ribbon in $\gamma/\rho$ 
(recall that Lemma~\ref{lem:rib} assures the skew shape
is made up of ribbons).
Then, for $0 \leq m < n$ and $n$-cores $\nu$ and $\gamma$,
a {\it strong $m$-strip} from $\nu$ to $\gamma$ is a saturated chain of cores
\begin{displaymath}
\nu= \gamma^{(0)} \lessdot_B \gamma^{(1)} \lessdot_B \ldots
\lessdot_B \gamma^{(m)} = \gamma\,,
\end{displaymath}
together with an increasing ``content vector" $c =
(c_1,c_2,\cdots,c_{m})$, such that
$(\gamma^{(i)},c_i)$ is a marked strong cover of $\gamma^{(i-1)}$ for $1
\leq i \leq m$.
\begin{example}
\label{ex:strong}
For $n=4$, there are 2 saturated chains from $\nu = (3)$ to
$\gamma=(4,1,1)$,
\begin{equation}
\label{ex:strongchain}
\text{\tiny\tableau*[sbY]{ & & }} \lessdot_B
\text{\tiny\tableau*[sbY]{ \fr[w,l,t,r] \cr \fr[w,l,b,r] \cr & & }}
\lessdot_B \text{\tiny\tableau*[sbY]{ \cr \cr & & & \tf}}
\qquad
\text{\tiny\tableau*[sbY]{ & & }} \lessdot_B
\text{\tiny\tableau*[sbY]{ \tf \cr & & & \tf}} \lessdot_B
\text{\tiny\tableau*[sbY]{ \tf \cr \cr & & & }}
\,.
\end{equation}
The first chain with content vector $c=(-1,3)$ is thus the only
strong 2-strip from $\nu$ to $\gamma$.
\end{example}
The affine cohomology Pieri rule is
\begin{equation}
\label{strongpieri}
\xi^{c_{0,m}}\, \xi^w = \sum_{S} \xi^{z}\,,
\end{equation}
where the sum runs over strong $m$-strips $S$ from
$\mfa(w)$ to $\mfa(z)$.  In contrast to the Pieri rule
for $H_*(\Gr)$, a given term $\xi^z$ here
may occur with multiplicity greater than $1$.

\subsection{Horizontal strong strips}
\label{sec:pieri}

As with the affine Pieri rule for the cohomology $H^*(\Gr)$,
the Pieri rule~\cite{BS} and the quantum Pieri rule~\cite{FGP}
for (quantum) cohomology of the flag manifold are also determined 
by chains in the Bruhat order (see \eqref{quantummonk}).
However, it is the homology of $\Gr$, not the cohomology,
that is algebraically tied to the quantum cohomology  of the flag manifold
(detailed in \S~\ref{sec:gw}).  To align the combinatorics with the algebra,
we introduce a distinguished subclass of strong order chains that
characterize the affine homology Pieri rule.
The fundamental observation is that the translation of an $n$-core  
$\lambda$ to the $n$-core $R(n-1,\lambda)=(\lambda_1+n-1,\lambda)$ 
plays a crucial role.

\begin{definition}\label{hstrongstrip}
A pair of $n$-cores $(\lambda,\nu)$
is a horizontal strong $m$-strip if $\lambda\subset\nu$
and there is a saturated chain of cores
\begin{equation}
\label{schain}
\nu=\nu^{(0)}\lessdot_B\nu^{(1)}
\lessdot_B \cdots \lessdot_B \nu^{(m)}=R(n-1,\lambda)
\end{equation}
such that the bottom row of $\nu^{(i)}$ is longer than the bottom
row of $\nu^{(i-1)}$, for $1 \leq i \leq m$ where $m=n-1+deg(\lambda)-deg(\nu)$.
\end{definition}

\begin{example}
\label{ex:stronghs}
For $n=4$, $\lambda = (1,1),$ and $\nu = (3)$,
$(\lambda,\nu)$ is not a horizontal strong strip
since neither of the strong chains from $\nu$ to $R(n-1,\lambda)$
shown in \eqref{ex:strongchain} have strictly growing bottom 
rows.  

\noindent
For $\lambda = (3,1,1)$, the $4$-cores $\nu$ 
such that $(\lambda,\nu)$ is a horizontal strong $2$-strip
begin with the chains to $R(3,\lambda)$:
$$
{\substack{{\text{\tiny\tableau*[sbY]{ \cr \cr \cr & & }}} ~\lessdot_B~
{\text{\tiny\tableau*[sbY]{\cr \cr \cr & & & \tf}}} ~\lessdot_B~
{\text{\tiny\tableau*[sbY]{ \cr \cr & \fr[w,l,t,b] & \fr[w,t,b,r] \cr & & & &
\fr[w,l,t,b] & \fr[w,t,b,r]}}} 
}}
\hspace{0.23in}
{\substack{{\text{\tiny\tableau*[sbY]{ \cr \cr & & & }}} ~\lessdot_B~
{\text{\tiny\tableau*[sbY]{\cr & \tf \cr & & & & \tf}}} ~\lessdot_B~
{\text{\tiny\tableau*[sbY]{ \tf \cr \cr & & \tf \cr & & & & & \tf}}}
}}
\hspace{0.23in}
{\substack{{\text{\tiny\tableau*[sbY]{ \cr & \cr & &}}} ~\lessdot_B~
{\text{\tiny\tableau*[sbY]{\cr & \cr & & & \fr[w,l,t,b] & \fr[w,t,b,r]}}}
~\lessdot_B~ {\text{\tiny\tableau*[sbY]{ \tf \cr \cr & & \tf \cr & & & & & \tf}}}
}}
$$
\end{example}

We have chosen the terminology horizontal strong strip
because, although not immediately obvious, there always exists a strong strip 
from $\nu$ to $R(n-1,\lambda)$ of shapes that differ 
by ribbons of height one when $(\lambda,\nu)$ is a horizontal strong strip. 
The following lemma associates horizontal strong strips to
the horizontality condition and we then connect to strong strips.

\begin{lemma}
\label{lem:hstrip}
Given $n$-cores $\lambda\subset\nu$ and a saturated chain
of shapes \eqref{schain} whose bottom rows strictly 
increase, there are adjacent ribbons $S^1,\ldots,S^m$ in the 
bottom row of $R(n-1,\lambda)/\lambda$ such that
the shape $\nu^{(j)}/\nu^{(j-1)}$ is comprised 
of all copies of $S^j$ that can be removed from $\nu^{(j)}$,
for each $1\leq j\leq m$.
\end{lemma}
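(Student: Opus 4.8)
The plan is to read the chain \eqref{schain} one cover at a time, to extract from each cover the fixed ribbon supplied by Lemma~\ref{lem:rib}, and then to use the bottom-row hypothesis together with the rigid shape of the translate $R(n-1,\lambda)$ to force that ribbon to be horizontal and to sit at the right end of the bottom row. First I would fix $j$ and look at $\nu^{(j-1)}\lessdot_B\nu^{(j)}$. By Lemma~\ref{lem:rib} the skew $\nu^{(j)}/\nu^{(j-1)}$ is a disjoint union of copies of one ribbon, which I call $S^j$, and all of its copies share a common head residue. By Lemma~\ref{lem:tau} the cover adjoins \emph{every} addable copy of $S^j$; since the transposition realizing the cover is an involution, reading this in reverse shows that $\nu^{(j)}/\nu^{(j-1)}$ is exactly the set of copies of $S^j$ removable from $\nu^{(j)}$. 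Hence the last assertion of the lemma is automatic once $S^j$ is identified, and the real work is to prove that each $S^j$ is a horizontal ribbon lying in the bottom row and that the $S^j$ are adjacent.

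Next I would locate these ribbons. Write $L_i$ for the length of the bottom row of $\nu^{(i)}$, so the hypothesis reads $L_0<L_1<\cdots<L_m$ with $L_0=\nu_1$ and $L_m=\lambda_1+n-1$. The cells adjoined to the bottom row along the whole chain therefore occupy columns $\nu_1+1,\dots,\lambda_1+n-1$, a block of $\lambda_1+n-1-\nu_1\le n-1$ consecutive columns, so these cells carry pairwise distinct residues. Because the bottom row of a partition can only grow at its right end, the row-$1$ cells created at step $j$ are precisely the consecutive block of columns $L_{j-1}+1,\dots,L_j$, and this block abuts the one created at step $j-1$; this already yields the adjacency claim, and since $\lambda_1\le\nu_1$ the whole block of columns $\nu_1+1,\dots,\lambda_1+n-1$ sits inside the bottom row $\lambda_1+1,\dots,\lambda_1+n-1$ of $R(n-1,\lambda)/\lambda$. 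Two distinct copies of $S^j$ meeting row $1$ would contribute two heads of equal residue (Lemma~\ref{lem:rib}) at distinct columns inside this block, whose width is less than $n$, which is impossible; so exactly one copy of $S^j$ meets the bottom row, and its row-$1$ cells are exactly that block.

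The crux is to rule out height. Suppose $S^j$ had height at least two. Its bottom-row copy is rookwise connected and contains no $2\times2$, so it enters row $1$ by a down-step and therefore contains a cell of the second row directly above the left end of its row-$1$ block, namely in column $L_{j-1}+1$. Since $L_{j-1}\ge L_0=\nu_1\ge\lambda_1$ (using $\lambda\subset\nu$), this forces the second row of $\nu^{(j)}$ to have length at least $\lambda_1+1$. But $\nu^{(j)}\subset R(n-1,\lambda)=(\lambda_1+n-1,\lambda)$, whose second row has length only $\lambda_1$ --- a contradiction. Thus $S^j$ has height one and is the horizontal ribbon filling the block $L_{j-1}+1,\dots,L_j$. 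I expect this height step to be the main obstacle, and it is resolved entirely by the rigidity of the translate: the second row of $R(n-1,\lambda)$ is no longer than the bottom row of $\lambda$, while every newly created bottom-row cell lies strictly to the right of column $\lambda_1$, so no cover in a bottom-row-increasing chain can climb out of the first row. Assembling the three steps --- the per-cover ribbon from Lemma~\ref{lem:rib}, the uniqueness and adjacency of its bottom-row copy, and the height-one conclusion --- gives the adjacent horizontal ribbons $S^1,\dots,S^m$ with $\nu^{(j)}/\nu^{(j-1)}$ equal to all removable copies of $S^j$, as required.
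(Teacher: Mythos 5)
Your proof is correct and takes essentially the same route as the paper's: Lemmas~\ref{lem:rib} and \ref{lem:tau} supply the ribbon $S^j$ and the ``all removable copies'' conclusion, the strictly increasing bottom rows place the head of $S^j$ in row one, and the containment $\nu^{(j)}\subset R(n-1,\lambda)$ forces height one. The only cosmetic difference is that the paper deduces height one by noting that $\nu^{(j)}/\nu^{(j-1)}\subseteq R(n-1,\lambda)/\lambda$ is a horizontal strip, whereas you argue via the length of the second row of $R(n-1,\lambda)$ --- the same rigidity, phrased differently.
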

\begin{proof}
Consider $\lambda\subset\nu$ and a chain  of $n$-cores
\eqref{schain} where the bottom rows increase in size.
Let $S^j$ denote the lowest ribbon in $\nu^{(j)}/\nu^{(j-1)}$.
Since the bottom row of $\nu^{(j)}$ is strictly longer
than the bottom of $\nu^{(j-1)}$, the head of $S^j$
lies in the bottom row of $\nu^{(j)}$.  Moreover, $S^j$ has height 
one since $\lambda\subset\nu^{(j)}\subset
(n-1+\lambda_1,\lambda)$ and $(n-1+\lambda_1,\lambda)/\lambda$ 
is a horizontal strip.
Therefore, $S^j$ is a removable ribbon lying entirely in the bottom 
row of $\nu^{(j)}$.
Lemma~\ref{lem:tau} then implies that
$\nu^{(j)}/\nu^{(j-1)}$ consists of all copies of $S^j$
that can be removed from $\nu^{(j)}$.
\end{proof}

\begin{proposition}
\label{prop:ss2rss}
For $n$-cores $\lambda\subset\nu$,
the pair $(\lambda,\nu)$ is a horizontal strong $m$-strip
if and only if there exists a strong
$m$-strip from $\nu$ to $R(n-1,\lambda)$
whose content vector $c$ satisfies $c_1\geq \lambda_1$.
\end{proposition}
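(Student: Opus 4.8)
The plan is to extract one geometric fact about the target core $R(n-1,\lambda)=(\lambda_1+n-1,\lambda)$ and deduce both implications from it. Reading $R(n-1,\lambda)$ as the vector $(\lambda_1+n-1,\lambda_1,\lambda_2,\dots,\lambda_n)$, its $i$-th row with $i\ge 2$ has length $\lambda_{i-1}$, which is at most the largest part $\lambda_1$; hence a cell $(i,j)$ with $i\ge 2$ has content $j-i\le\lambda_{i-1}-i\le\lambda_1-2<\lambda_1$. So \emph{every cell of $R(n-1,\lambda)$ of content at least $\lambda_1$ lies in the bottom row}. I would record this as the key observation, after which each direction is short.

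For the implication $(\Leftarrow)$, suppose we are given a strong $m$-strip $\nu=\gamma^{(0)}\lessdot_B\cdots\lessdot_B\gamma^{(m)}=R(n-1,\lambda)$ with increasing content vector $c$ satisfying $c_1\ge\lambda_1$. Monotonicity forces $c_i\ge c_1\ge\lambda_1$ for every $i$. By definition $c_i$ is the content of the head of a ribbon in $\gamma^{(i)}/\gamma^{(i-1)}$, so this head is a cell of $\gamma^{(i)}\subseteq R(n-1,\lambda)$ of content at least $\lambda_1$; by the key observation it sits in the bottom row. Since it belongs to $\gamma^{(i)}$ but not $\gamma^{(i-1)}$ and both diagrams are left-justified, the bottom row of $\gamma^{(i)}$ is strictly longer than that of $\gamma^{(i-1)}$. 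As this holds for all $i$, the very same chain shows that $(\lambda,\nu)$ is a horizontal strong $m$-strip.

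For $(\Rightarrow)$, start from a horizontal strong $m$-strip $\nu=\nu^{(0)}\lessdot_B\cdots\lessdot_B\nu^{(m)}=R(n-1,\lambda)$ whose bottom rows strictly increase. I would apply Lemma~\ref{lem:hstrip} to obtain the adjacent height-one ribbons $S^1,\dots,S^m$ in the bottom row, and set $c_j$ equal to the content of the head of $S^j$, that is $c_j=\nu^{(j)}_1-1$. Then $(\nu^{(j)},c_j)$ is a marked strong cover of $\nu^{(j-1)}$ directly from the definition; the $c_j$ strictly increase because the bottom-row lengths $\nu^{(j)}_1$ do; and $c_1=\nu^{(1)}_1-1\ge\nu_1\ge\lambda_1$, the last inequality from $\lambda\subseteq\nu$. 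This produces a strong $m$-strip with $c_1\ge\lambda_1$, as required. (The two notions involve saturated chains between the same endpoints $\nu$ and $R(n-1,\lambda)$, so they carry the same length $m$ and there is no ambiguity there.)

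I expect essentially all the work to be in the key observation and in noticing that the marked head is forced into the bottom row; the rest is bookkeeping. The one subtlety to respect is that a single cover $\gamma^{(i-1)}\lessdot_B\gamma^{(i)}$ can attach several copies of its ribbon, some in higher rows, so the argument must use the content label $c_i$ to select the bottom-row copy rather than assert that the whole skew $\gamma^{(i)}/\gamma^{(i-1)}$ lies in the bottom row. Once the content label is used in this way, no case analysis remains.
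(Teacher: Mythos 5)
Your proof is correct and follows essentially the same route as the paper's: the forward direction invokes Lemma~\ref{lem:hstrip} to read off the content vector from the bottom-row ribbons, and the reverse direction rests on the observation that every cell of $R(n-1,\lambda)$ of content at least $\lambda_1$ lies in the bottom row (which the paper phrases via the last $n-1$ cells of that row sitting atop their columns, and you verify by a direct row-length computation). Your closing remark about using the content label to select the bottom-row copy of the ribbon, rather than assuming the whole skew lies there, is a point the paper handles the same way.
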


\begin{proof}
Given any horizontal strong $m$-strip $(\lambda,\nu)$,
we have a chain \eqref{schain} that is characterized
by ribbons $S^1,\ldots,S^m$ lying in the bottom row 
of $R(n-1,\lambda)/\nu$ by Lemma~\ref{lem:hstrip}.
We can obtain a strong strip by associating it to 
the content vector $(c_1,\ldots,c_m)$, where $c_i$ is 
the content of the head of ribbon $S^i$.
Then $c_1 \geq \lambda_1$ since $\lambda\subset\nu$.

On the other hand, consider cores $\nu=\nu^{(0)}\lessdot_B\nu^{(1)}
\lessdot_B \cdots \lessdot_B \nu^{(m)}=R(n-1,\lambda)$
such that the head $h_i$ of a ribbon in $\nu^{(i)}/\nu^{(i-1)}$
has content $c_i$ and $\lambda_1\leq c_1<\cdots <c_m$.
The last $n-1$ cells in the bottom row of
$R(n-1,\lambda)$ lie at the top of their column and 
therefore they are the only cells with content greater than $\lambda_1-1$.
Therefore, $h_i$ must lie in the bottom row of
$\nu^{(i)}\subset R(n-1,\lambda)$ implying that
bottom rows are strictly growing.
\end{proof}

\begin{remark}
\label{rem:unique}
The proof of Theorem~\ref{prop:main} will establish
a claim stronger than Proposition~\ref{prop:ss2rss}:
each horizontal strong strip $(\lambda,\nu)$ corresponds uniquely 
to a strong strip from $\nu$ to $R(n-1,\lambda)$ with $c_1\geq \lambda_1$.
\end{remark}

\begin{example}\label{stronghsEx}
For $n=4$ and $\lambda = (3,1,1)$, the $n$-cores $\nu$ 
such that $(\lambda,\nu)$ is a horizontal strong $2$-strip
are given in Example~\ref{ex:stronghs} and each corresponds to 
a unique strong 2-strip from $\nu$ to $R(3,\lambda)$ 
with $c_1\geq 3$: their content vectors are 
$(3,5)$, $(4,5)$, and $(4,5)$, respectively.
\end{example}

Horizontal strong strips in hand, we now discuss their
correspondence with weak order cyclically decreasing chains.
For a fixed $x\in [n]=\{0,\ldots,n-1\}$ and $y \in \{0,\ldots,n-1\}\backslash\{x\}$, 
it will be inferred that $x+1 \leq y \leq x-1$ is taken with
respect to the total order defined by
$$
x+1 < x+2 < \cdots < 0 < n-1 < \cdots < x-1.
$$
For $n$-cores $\lambda$ and $\nu$,
a simple construction produces a cyclically decreasing 
word for $w_\nu w_\lambda^{-1}$ from a relevant strong chain 
from $\nu$ to $R(n-1,\lambda)$.  For $x=\lambda_1-1\pmod n$,
define the map
$$
\psi:
\nu=\nu^{(0)}\lessdot_B\nu^{(1)}\lessdot_B\cdots\lessdot_B\nu^{(m)}=R(n-1,\lambda)
\quad
\longmapsto \quad s_{j_1}\cdots s_{j_{n-1-m}}
$$
where the elements $j_1>\cdots >j_{n-1-m}$ of
$\{x-1,\ldots,x+1\}\backslash\{a_1,\ldots,a_m\}$ are
obtained by taking $a_{m-i}$ to be the residue of 
the leftmost cell in the
bottom row of $\nu^{(i+1)}/\nu^{(i)}$, for $0\leq i<m$.
In reverse, a strong chain arises from a reduced 
expression for $w_\nu w_{\lambda}^{-1}$ with the map
$$
\phi:
s_{j_1}\cdots s_{j_{n-1-m}}\quad\longmapsto\quad
\nu=\nu^{(0)}\subset\nu^{(1)}\subset\cdots\subset\nu^{(m)} =R(n-1,\lambda)
\,,
$$
where $\nu^{(i)}$ is obtained from $\nu^{(i+1)}$ by deleting 
all removable copies of the ribbon whose tail has residue $a_{m-i}$
and lies in the bottom row,
where $x+1 \leq a_m < \cdots < a_1 \leq x-1$ are the elements
of $\{x-1,\ldots,x+1\}\backslash\{j_1,\ldots,j_{n-1-m}\}$.

Several lemmas are first needed to prove that $\phi$ and $\psi$ 
give the desired bijection.
Horizontal strong strips $(\lambda,\nu)$ are defined on the level 
of cores where the key idea is to study strong chains from 
$\nu$ to the $n$-translation of $\lambda$ defined by $R(n-1,\lambda)$.
A preliminary result puts the idea of this translation into the 
framework of the affine Weyl group.

\begin{lemma}
\label{lem:wRkw}
For $w_\lambda\in \tS$, the length $\ell(w_{R(n-1,\lambda)})=n-1+\ell(w_\lambda)$
and
$$
w_{R(n-1,\lambda)} = s_{x-1}\cdots s_{x+1}\,w_\lambda\,,
$$
where $x=\lambda_1-1\pmod n$.
\end{lemma}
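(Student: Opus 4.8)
The plan is to show that both assertions fall out once the skew shape $R(n-1,\lambda)/\lambda$ is understood as a horizontal strip and the residues of its cells are read off. Throughout set $x\equiv\lambda_1-1\pmod n$ and let $v=s_{x-1}s_{x-2}\cdots s_{x+1}$ be the cyclically decreasing word using each residue in $\{0,\dots,n-1\}\setminus\{x\}$ exactly once. First I would record the shape of the translate: writing $R(n-1,\lambda)=(\lambda_1+n-1,\lambda_1,\lambda_2,\dots)$, the cells of $R(n-1,\lambda)/\lambda$ in row $r$ occupy exactly the columns $\lambda_r+1,\dots,\lambda_{r-1}$ (with $\lambda_0:=\lambda_1+n-1$). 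These column ranges are consecutive and disjoint, so the skew has precisely one cell in each of the columns $1,\dots,\lambda_1+n-1$; in particular it is a horizontal strip.

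The heart of the argument is to prove that the set of residues occurring in $R(n-1,\lambda)/\lambda$ is exactly $\{0,\dots,n-1\}\setminus\{x\}$. The $n-1$ bottom-row cells lie in columns $\lambda_1+1,\dots,\lambda_1+n-1$, hence have contents $\lambda_1,\dots,\lambda_1+n-2$ and realize all residues $x+1,\dots,x+n-1$; so the residue set already contains $\{0,\dots,n-1\}\setminus\{x\}$. The real work is to check that \emph{no} cell of the strip has residue $x$. Every cell in a row $r\ge 2$ has content at most $\lambda_{r-1}-r\le\lambda_1-2<x$, so a residue-$x$ cell could only have content $x-kn$ with $k\ge 1$; setting $a_r=x-(\lambda_r-r)$, a short rearrangement shows such a cell exists exactly when some multiple of $n$ lies strictly between $a_{r-1}$ and $a_r$. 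I expect this bookkeeping to be the main obstacle. It is resolved by the $n$-core structure: the beta-set $\{\lambda_i-i\}$ of $\lambda$ is closed under subtracting $n$, and since $\lambda_1-1=x$ lies in it, every $x-kn$ ($k\ge 0$) lies in it as well; equivalently every nonnegative multiple of $n$ equals some $a_r$. As the $a_r$ are strictly increasing with $a_1=0$, no multiple of $n$ is skipped, so residue $x$ never appears. If one prefers to stay inside the excerpt, the same ``no skipped multiple'' statement can be phrased via Property~\ref{thecoreprop} applied to the residue-$x$ extremal cells of $\lambda$.

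With the strip and its residue set established, I would invoke Lemma~\ref{lem:cychor} with $u=w_{R(n-1,\lambda)}$ and $w=w_\lambda$ (both Grassmannian, and $\lambda\subset R(n-1,\lambda)$). Since $\mfa(u)/\mfa(w)=R(n-1,\lambda)/\lambda$ is a horizontal strip whose residue set is $\{0,\dots,n-1\}\setminus\{x\}$, the lemma forces $w_{R(n-1,\lambda)}\,w_\lambda^{-1}$ to be cyclically decreasing with that residue set. Because a cyclically decreasing element is uniquely determined by its (distinct) residues, this element must be $v$, giving $w_{R(n-1,\lambda)}=s_{x-1}\cdots s_{x+1}\,w_\lambda$.

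Finally I would settle the length statement. Reading $\mu^{(j)}=s_{x+j}\cdots s_{x+1}\lambda$ as an iterated action on cores, a simple induction shows the bottom row of $\mu^{(j-1)}$ has length $\lambda_1+j-1$, so the cell at the end of that (longest) row is an addable corner of residue $x+j$, and it is the only bottom-row cell added at that step. By \eqref{addres} each left multiplication then raises the degree by exactly one, so $deg(\mu^{(n-1)})=deg(\lambda)+(n-1)$. Since the previous paragraph identifies $\mu^{(n-1)}=\mfa(v\,w_\lambda)=R(n-1,\lambda)$, this yields $\ell(w_{R(n-1,\lambda)})=deg(R(n-1,\lambda))=deg(\lambda)+(n-1)=\ell(w_\lambda)+n-1$, completing the proof of both assertions.
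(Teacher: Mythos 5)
Your direct analysis of the skew shape $R(n-1,\lambda)/\lambda$ --- showing it has exactly one cell in each column $1,\dots,\lambda_1+n-1$ and, via the closure of the beta-set $\{\lambda_i-i\}$ under subtracting $n$, that residue $x$ never occurs --- is correct and is genuinely different from (and more explicit than) what the paper does; the paper never examines $R(n-1,\lambda)/\lambda$ at all. Likewise your final paragraph's degree count, where each $s_{x+j}$ finds an addable corner at the end of the bottom row so that \eqref{addres} raises the length by one at each step, is exactly the paper's argument. The problem is the order in which you assemble these pieces. To invoke the ``if'' direction of Lemma~\ref{lem:cychor} with $u=w_{R(n-1,\lambda)}$ and $w=w_\lambda$ you must already know the standing hypothesis $\ell(uw^{-1})=m$ with $m=n-1$, i.e.\ essentially that $\deg R(n-1,\lambda)=\deg\lambda+n-1$. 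That is half of the statement being proved, and you only obtain it in the last paragraph --- where the identification $\mu^{(n-1)}=\mfa(v\,w_\lambda)=R(n-1,\lambda)$ is justified by citing ``the previous paragraph,'' i.e.\ the very application of Lemma~\ref{lem:cychor} in question. As written, the argument is circular.

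The fix requires an extra step that you currently skip (and that the paper itself only asserts, as the claim that $R(n-1,\lambda)$ is the \emph{unique} core obtained by adding a horizontal strip to $\lambda$ while raising the degree by $n-1$). Run the degree computation first: it gives $\ell(v w_\lambda)=\ell(w_\lambda)+n-1$ with no reference to $R(n-1,\lambda)$, so the \emph{forward} direction of Lemma~\ref{lem:cychor} legitimately applies to $u=v w_\lambda$ and shows $\mu^{(n-1)}/\lambda$ is a horizontal strip with residue set $\{0,\dots,n-1\}\setminus\{x\}$. You must then prove $\mu^{(n-1)}=R(n-1,\lambda)$ outright. Containment $\mu^{(n-1)}\subseteq R(n-1,\lambda)$ is easy (the bottom row of $\mu^{(n-1)}$ has length $\lambda_1+n-1$, and the horizontal-strip interlacing gives $\mu^{(n-1)}_r\le\lambda_{r-1}=R(n-1,\lambda)_r$ for $r\ge 2$), but equality needs an argument --- for instance that an $n$-core with bottom row $\lambda_1+n-1$ containing $\lambda$ is forced, again by beta-set closure applied to $\mu_1-1-kn$, to contain every cell of $R(n-1,\lambda)/\lambda$ identified in your first paragraph. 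Until that uniqueness step is supplied, the equality $w_{R(n-1,\lambda)}=s_{x-1}\cdots s_{x+1}w_\lambda$ has not been established.
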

\begin{proof}
It suffices to prove that $R(n-1,\lambda)=\mfa(s_{x-1}\cdots s_{x+1}w)$.
Since the lowest addable corner of $\lambda$ has residue $x+1$,
$s_{x+1}$ acts on $\lambda$ by adding all corners of residue $x+1$.
Similarly, $s_{x+2}$ adds corners of
residue $x+2$ and by iteration, the degree of $\lambda$
increases by $n-1$ under the action of $s_{x-1}\cdots s_{x+1}$.
Since $s_{x-1}\cdots s_{x+1}$ is cyclically decreasing,
Lemma~\ref{lem:cychor} implies that it
acts on $\lambda$ by adding a horizontal strip.
The result follows by noting that
$R(n-1,\lambda)$ is the unique core obtained
by adding a horizontal strip to $\lambda$ and increasing degree by $n-1$.
\end{proof}

For $x \in \{0,1,\ldots,n-1\}$, let
$S_{\hat{x}} = \langle s_0,\ldots,\hat s_{x},\ldots, s_{n-1}\rangle
\subset\tS$ be the subgroup generated by all simple reflections
except $s_x$.

\begin{lemma}
\label{lem:tildev}
Given $w_\lambda,u\in\tS\,$ where
$u w_\lambda^{-1}$ is a cyclically decreasing permutation
and $\ell(uw_\lambda^{-1})=\ell(u)-\ell(w_\lambda)$,
then $uw_\lambda^{-1}\in S_{\hat x}\,$ for $x=\lambda_1-1\pmod n$.
\end{lemma}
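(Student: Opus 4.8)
The plan is to translate the statement about affine permutations into the language of $n$-cores and reduce to a containment of shapes. Since $v:=uw_\lambda^{-1}$ is cyclically decreasing with $\ell(v)=\ell(u)-\ell(w_\lambda)$, Lemma~\ref{lem:cychor} applies with $w=w_\lambda$: the skew $\mfa(u)/\mfa(w_\lambda)$ is a horizontal strip and the set of residues labelling its cells is exactly the set of letters of a (cyclically decreasing) reduced word for $v$. Because membership in $S_{\hat x}$ only requires that $v$ be expressible without the generator $s_x$, it therefore suffices to show that \emph{no cell of the horizontal strip $\mfa(u)/\lambda$ has residue $x=\lambda_1-1 \pmod n$}, where I write $\mu:=\mfa(u)$ and $\lambda=\mfa(w_\lambda)$.

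The key idea is to compare $\mu$ with the translate $R(n-1,\lambda)=(\lambda_1+n-1,\lambda)$, and to prove the containment $\mu\subseteq R(n-1,\lambda)$. For this I would use two inputs. First, since $\mu/\lambda$ is a horizontal strip with $\lambda\subseteq\mu$, the parts interlace, giving $\mu_i\le\lambda_{i-1}$ for all $i\ge 2$; this is precisely the inequality $\mu_i\le R(n-1,\lambda)_i$ for every row $i\ge 2$, since $R(n-1,\lambda)$ has $i$-th part $\lambda_{i-1}$ by construction. Second, I would bound the bottom row by $\mu_1\le\lambda_1+n-1=R(n-1,\lambda)_1$. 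This bottom-row bound is the one nontrivial point: if instead $\mu_1\ge\lambda_1+n$, then the cell $(1,\mu_1-n+1)$ of $\mu$ has arm $n-1$ and, because interlacing forces $\mu_2\le\lambda_1<\mu_1-n+1$, leg $0$, hence hook length exactly $n$ — contradicting that $\mu$ is an $n$-core. Combining the two inputs yields $\mu_i\le R(n-1,\lambda)_i$ for all $i$, i.e. $\mu\subseteq R(n-1,\lambda)$, and therefore $\mu/\lambda\subseteq R(n-1,\lambda)/\lambda$.

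To finish, I would invoke Lemma~\ref{lem:wRkw}, which identifies $w_{R(n-1,\lambda)}=s_{x-1}\cdots s_{x+1}\,w_\lambda$ with $s_{x-1}\cdots s_{x+1}$ cyclically decreasing. Applying Lemma~\ref{lem:cychor} to this factorization shows that the residues occurring in $R(n-1,\lambda)/\lambda$ are exactly $\{x+1,x+2,\dots,x-1\}$, that is, every residue except $x$. Since $\mu/\lambda$ sits inside $R(n-1,\lambda)/\lambda$, none of its cells can carry residue $x$ either, which is exactly what was needed; hence $v\in S_{\hat x}$. I expect the main obstacle to be pinning down the containment $\mu\subseteq R(n-1,\lambda)$ — specifically the bottom-row estimate $\mu_1\le\lambda_1+n-1$, where the $n$-core hypothesis (via the length-$n$ hook) must be combined with the interlacing coming from the horizontal-strip condition; once that inequality is secured, the remaining rows follow directly from interlacing and the conclusion is immediate.
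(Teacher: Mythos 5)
Your proof is correct, but it takes a genuinely different route from the paper's. Both arguments begin the same way, invoking Lemma~\ref{lem:cychor} to reduce the claim to showing that no cell of the horizontal strip $\mfa(u)/\lambda$ carries residue $x$. The paper then stays entirely inside the core $\lambda$: it observes that each residue $j_t$ occurring in the strip forces an extremal cell of $\lambda$ of residue $j_t$ that is not at the end of its row, while Property~\ref{thecoreprop} (propagating from the last cell of the bottom row, which has residue $x$) shows that every extremal cell of $\lambda$ of residue $x$ \emph{is} at the end of its row; hence $x\neq j_t$. You instead prove the containment $\mfa(u)\subseteq R(n-1,\lambda)$ --- the interlacing inequalities handle rows $i\ge 2$, and your hook-length-$n$ argument correctly bounds the bottom row, since $\mu_1\ge\lambda_1+n$ would put a cell of arm $n-1$ and leg $0$ in row one --- and then read off the residues of $R(n-1,\lambda)/\lambda$ from Lemma~\ref{lem:wRkw} together with Lemma~\ref{lem:cychor}. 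Your route has the appeal of making explicit the structural fact that any cyclically decreasing extension of $w_\lambda$ stays inside the translate $R(n-1,\lambda)$, which foreshadows the horizontal strong strip machinery of Theorem~\ref{prop:main}, and the bottom-row estimate is elementary and self-contained; the paper's route is more local, needing no mention of the translation at all and relying only on the extremal-cell rigidity of $n$-cores. Both are complete proofs.
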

\begin{proof}
Let $v=s_{j_1}\cdots s_{j_m}$ be a reduced expression for $uw_\lambda^{-1}$.
By the definition of $\mfa$, the residues labelling the cells 
in $D=\mfa(vw_\lambda)/\lambda$ come from the set $\{j_1,\ldots,j_m\}$.
In fact, since $\ell(vw_\lambda)=\ell(w_\lambda)+m$,
the cells of $D$ are labelled by precisely the set $\{j_1,\ldots,j_m\}$.
Since $v$ is cyclically decreasing, we also have that $D$ is a horizontal
strip by Lemma~\ref{lem:cychor}.  Therefore, an extremal
cell of residue $j_t$ that does not lie at the end of its row
occurs in $\lambda$ for every $1\leq t\leq m$.
Since $x$ is the residue of the last cell in the bottom row
of $\lambda$, Property~\ref{thecoreprop} implies that
every extremal cell of $\lambda$ with residue $x$ lies at
the end of its row.  In particular, $x\neq j_t$ and
we have $v\in S_{\hat x}$.
\end{proof}

\begin{theorem}
\label{prop:main}
For $n$-cores $\lambda$ and $\nu$,
$(\lambda,\nu)$ is a horizontal strong strip if and only if
$w_\nu w_{\lambda}^{-1}$ is a cyclically decreasing permutation 
where $\ell(w_\nu)=\ell(w_\lambda)+\ell(w_\nu w_\lambda^{-1})$.
\end{theorem}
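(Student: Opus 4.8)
The plan is to prove the equivalence by verifying that the maps $\psi$ and $\phi$ defined above are mutually inverse bijections, sending a horizontal strong strip $(\lambda,\nu)$ to a reduced cyclically decreasing word for $w_\nu w_\lambda^{-1}$ and back. The organizing principle is Lemma~\ref{lem:wRkw}, which identifies $w_{R(n-1,\lambda)}$ with the full cyclically decreasing product $s_{x-1}\cdots s_{x+1}\,w_\lambda$ for $x=\lambda_1-1\pmod n$. A horizontal strong strip should thus be read as a factorization of this product in which the tails of the bottom-row ribbons are consumed by the strong covers while the complementary residues assemble the weak word $w_\nu w_\lambda^{-1}$.

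For the forward implication, suppose $(\lambda,\nu)$ is a horizontal strong strip. The saturated chain forces $\lambda\subset\nu\subset R(n-1,\lambda)$, and by Lemmas~\ref{lem:wRkw} and~\ref{lem:cychor} the skew $R(n-1,\lambda)/\lambda$ is a horizontal strip. Hence the sub-skew $\nu/\lambda$ is again a horizontal strip, and Lemma~\ref{lem:cychor} yields that $w_\nu w_\lambda^{-1}$ is cyclically decreasing with $\ell(w_\nu)=\ell(w_\lambda)+\ell(w_\nu w_\lambda^{-1})$. To obtain the precise residue bookkeeping demanded by $\psi$ (and by Remark~\ref{rem:unique}), I would apply Lemma~\ref{lem:hstrip} to produce the adjacent bottom-row ribbons $S^1,\dots,S^m$, record their tail residues as $a_1,\dots,a_m$, and note that the bottom row of $R(n-1,\lambda)/\lambda$ carries each residue $\neq x$ exactly once; the tails account for precisely $\{a_1,\dots,a_m\}$, so the residue set of $\nu/\lambda$ is the complement $\{x-1,\dots,x+1\}\setminus\{a_1,\dots,a_m\}$, which is the word produced by $\psi$.

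The harder direction is the converse. Assume $w_\nu w_\lambda^{-1}$ is cyclically decreasing with additive length. Lemma~\ref{lem:tildev} places it in $S_{\hat x}$, so its reduced word avoids the residue $x$ and, being cyclically decreasing, is a decreasing subword $s_{j_1}\cdots s_{j_k}$ of $s_{x-1}\cdots s_{x+1}$. Since both $w_\nu=s_{j_1}\cdots s_{j_k}w_\lambda$ and $w_{R(n-1,\lambda)}=s_{x-1}\cdots s_{x+1}w_\lambda$ are reduced, the subword property of Bruhat order gives $w_\nu\le_B w_{R(n-1,\lambda)}$, that is, $\nu\subset R(n-1,\lambda)$ with degree gap $m=n-1-k$. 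It then remains to realize the chain prescribed by $\phi$: reading off the complementary residues $a$, one peels from $R(n-1,\lambda)$ the removable ribbon whose bottom-row tail has residue $a$, in order of decreasing content, and must check that each peel is a genuine strong cover (via Lemma~\ref{lem:tau}) that strictly shortens the bottom row and that the procedure terminates exactly at $\nu$. I would carry this out by downward induction on $m$, using that the final $n-1$ cells of the bottom row of $R(n-1,\lambda)$ lie at the tops of their columns---so, as in the proof of Proposition~\ref{prop:ss2rss}, they are the only cells of content exceeding $\lambda_1-1$---which forces the head of each peeled ribbon into the bottom row and hence guarantees the growth condition; Proposition~\ref{prop:ss2rss} then certifies a horizontal strong $m$-strip.

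The main obstacle is exactly this construction of $\phi$'s chain: controlling, at each stage, that removing all copies of the bottom-row tail-$a$ ribbon keeps us inside the interval from $\nu$ to $R(n-1,\lambda)$ and never overshoots $\nu$. I expect Property~\ref{thecoreprop}, which pins down where extremal cells of a fixed residue can occur, together with Lemmas~\ref{lem:rib} and~\ref{lem:tau}, to supply the needed control. Verifying that $\psi$ and $\phi$ are mutually inverse then delivers not only the stated equivalence but also the sharper claim of Remark~\ref{rem:unique}, namely that each horizontal strong strip corresponds to a unique strong strip from $\nu$ to $R(n-1,\lambda)$ with $c_1\ge\lambda_1$.
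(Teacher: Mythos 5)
Your overall architecture (the maps $\psi$ and $\phi$ together with Lemmas~\ref{lem:wRkw}, \ref{lem:cychor}, \ref{lem:hstrip}, \ref{lem:tildev}) matches the paper's, but the forward direction as you argue it rests on a false inference. From a horizontal strong strip you extract only the containments $\lambda\subset\nu\subset R(n-1,\lambda)$ and the fact that $\nu/\lambda$ is a horizontal strip, and you then invoke Lemma~\ref{lem:cychor} to conclude that $w_\nu w_\lambda^{-1}$ is cyclically decreasing of additive length. That implication fails: Lemma~\ref{lem:cychor} needs the number of distinct residues on the strip to match $\ell(w_\nu w_\lambda^{-1})$, and containment in $R(n-1,\lambda)$ does not supply this. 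Concretely, for $n=3$, $\lambda=(1,1)$, $\nu=(3,1)$ one has $\lambda\subset\nu\subset R(2,\lambda)=(3,1,1)$ and $\nu/\lambda$ a horizontal strip, yet $w_\nu w_\lambda^{-1}=s_2s_1s_0\cdot s_0s_2=s_2s_1s_2$ has length $3$ while $deg(\nu)-deg(\lambda)=1$; this pair is not a horizontal strong strip, and your argument uses no feature of the chain that would exclude it. Your fallback --- that the residue set of $\nu/\lambda$ is $\{x-1,\dots,x+1\}\setminus\{a_1,\dots,a_m\}$ because ``the tails account for precisely $\{a_1,\ldots,a_m\}$'' --- is also unjustified: the ribbons $S^j$ occupy bottom-row cells of \emph{every} residue from the tail of $S^1$ through $x-1$, not just the tail residues, so you would still need to show that each non-tail residue reappears as a cell of $\nu/\lambda$ in a higher row, and then to break the circularity of applying Lemma~\ref{lem:cychor} before $\ell(w_\nu w_\lambda^{-1})$ is known. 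The paper sidesteps all of this by computing in the group: Lemma~\ref{lem:rib} gives $w_{\nu^{(i)}}=\tau_{a_{m-i},a_{m-i-1}}w_{\nu^{(i+1)}}$, and the telescoping product $\tau_{a_m,a_{m-1}}\cdots\tau_{a_1,x}\,(s_{x-1}\cdots s_{x+1})$ collapses to the reduced cyclically decreasing word on the complementary residues.

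In the converse direction your use of the subword property to get $\nu\subset R(n-1,\lambda)$ is a fine (and slightly slicker) opening, but the step you yourself flag as ``the main obstacle'' --- that each peel of the tail-$a_i$ ribbon is a genuine strong cover and that the chain terminates exactly at $\nu$ --- is precisely the substantive content of the paper's proof, and ``I expect Property~\ref{thecoreprop}\dots to supply the needed control'' is not an argument. The paper settles it by setting $w_{\eta^{(m-i)}}=\tau_{a_i,a_{i-1}}w_{\eta^{(m-i+1)}}=(s_{x-1}\cdots\hat{s}_{a_1}\cdots\hat{s}_{a_i}\cdots s_{x+1})w_\lambda$, proving that the length rises by exactly one at each step via the observation that the lowest addable corner of the intermediate core has residue $a_i$, and then identifying $\eta^{(m-i)}$ with $\nu^{(m-i)}$ through Lemma~\ref{lem:tau}. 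Until you supply that length computation (or an equivalent), both directions of your proof remain open.
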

\begin{proof}
From a horizontal strong $m$-strip $(\lambda,\nu)$, Lemma~\ref{lem:hstrip} 
gaurantees us a chain
$\nu=\nu^{(0)}\lessdot_B\nu^{(1)}\lessdot_B\cdots\lessdot_B\nu^{(m)}=R(n-1,\lambda)
\,,$
such that a ribbon $S^{i+1}$ of $\nu^{(i+1)}/\nu^{(i)}$ has
height one and is a removable ribbon in the bottom row of $\nu^{(i+1)}$.
It suffices to prove that the image $s_{j_1}\cdots s_{j_{t}}$
of this chain under $\psi$ is a
cyclically decreasing word for $w_\nu w_{\lambda}^{-1}$
of length $n-1-m$
since the definition of horizontal $m$-strip implies that
$\ell(w_\nu)=n-1-m+\ell(w_\lambda)$.

The definition of $\psi$ uses $a_{m-i}$ 
to denote the residue of the tail of $S^{i+1}$ and thus 
the residue of the head of $S^{i}$ must be $a_{m-i}-1$.
By Lemma~\ref{lem:rib}, we have
$w_{\nu^{(i)}}=\tau_{a_{m-i},a_{m-i-1}}w_{\nu^{(i+1)}}$ 
for $0 \leq i < m$, where $a_0 = \lambda_1 - 1 \pmod n$.
In particular, $w_\nu=\tau_{a_m,a_{m-1}}
\cdots \tau_{a_1,a_0} w_{R(n-1,\lambda)}$.
Since $\lambda \subset \nu$, we have that $\lambda_1\leq a_m$ and 
therefore $x+1 \leq a_m < \cdots < a_1 \leq x-1$ for $x=\lambda_1-1\pmod n$.
It follows from Lemma~\ref{lem:wRkw} that
$$
w_\nu w_\lambda^{-1}=\tau_{a_m,a_{m-1}}
\cdots \tau_{a_1,x}\left(s_{x-1}\cdots s_{x+1}\right)\,,
$$
or $w_\nu w_\lambda^{-1}=s_{j_1}\cdots s_{j_{n-1-m}}$
where $j_1>\cdots >j_{n-1-m}$ are the 
elements of $\{x-1,\ldots,x+1\}\backslash\{a_1,\ldots,a_m\}$.
Since these are $n-1-m$ distinct elements, the expression is reduced.

Before proving the reverse direction, note that 
$s_{j_1}\cdots s_{j_{n-1-m}}$ is the unique reduced expression for 
$w_\nu w_\lambda^{-1}$ that is ordered by 
$x-1\geq j_1>\cdots >j_{n-1-m}\geq x+1$ and it is determined
uniquely from ribbon tails in the given chain.  
Since a given chain under consideration is determined uniquely 
by its ribbon tails, the uniqueness claim of Remark~\ref{rem:unique} 
follows.

Suppose now that $j_1\cdots j_{n-1-m}$ is a reduced word for
a cyclically decreasing permutation $w_\nu w_{\lambda}^{-1}$
where $\ell(w_\nu)=\ell(w_\lambda)+n-1-m$.  By Lemma~\ref{lem:tildev},
$w_\nu w_\lambda^{-1}\in S_{\hat x}$ for $x=\lambda_1-1\pmod n$
and therefore there are $m$ elements $x-1\geq a_1>a_2>\cdots >a_m\geq x+1$
in the set $\{x-1,\ldots,x+1\}/\{j_1,\ldots,j_{n-1-m}\}$.
The $n-1$ removable cells in the bottom row of $R(n-1,\lambda)$,
of residues $x-1,\ldots,x+1$ from right to left,
can thus be tiled uniquely into ribbons whose tails are
$a_1,\ldots,a_m$, from right to left.  Therefore,
the shapes in the image of $j_1\cdots j_{n-1-m}$
under $\phi$
$$
\nu=\nu^{(0)}\subset\nu^{(1)}\subset\cdots\subset\nu^{(m)}
=R(n-1,\lambda)
$$
have increasing bottom rows. We claim
this is a strong saturated chain and $\lambda\subset\nu$.

Let $\eta^{(m)}=\nu^{(m)}$ so that by
Lemma~\ref{lem:wRkw}, $w_{\eta^{(m)}}=(s_{x-1}\cdots s_{x+1})w_\lambda$.
For $1\leq i\leq m$, define
$$
w_{\eta^{(m-i)}} = \tau_{a_i,a_{i-1}}
w_{\eta^{(m-i+1)}} =
(s_{x-1}\cdots \hat{s}_{a_1} \cdots \hat{s}_{a_2} 
\cdots \cdots  \hat{s}_{a_i} \cdots s_{x+1})w_{\lambda}\,,
$$
where $a_0=x$. 
Since $w_{\eta^{(0)}}=s_{j_1}\cdots s_{j_{n-1-m}}w_\lambda$,
we have that
$\lambda\subset \eta^{(0)}$ by Lemma~\ref{lem:cychor}.
If $w_{\eta^{(m-i)}}\lessdot_B w_{\eta^{(m-i+1)}}$,
then $\eta^{(m-i)}=\nu^{(m-i)}$ by Lemma~\ref{lem:tau}
and the claim follows.  To ensure that 
$w_{\eta^{(m-i)}}\lessdot_B w_{\eta^{(m-i+1)}}$,
it suffices to show that $w_{\eta^{(m-i+1)}}$
has length $n-i+\ell(w_\lambda)$.
Note that $\ell(w_{\eta^{(0)}})=n-1-m+\ell(w_\lambda)$
and consider $w_{\eta^{(m-i)}}$
of length $n-1-i+\ell(w_\lambda)$.  
By commuting relations,
$w_{\eta^{(m-i)}} = 
(\hat{s}_{a_{i-1}} \cdots  \hat{s}_{a_{i}}) w_{\mu}$
for
$w_\mu= (s_{x-1}\cdots \hat{s}_{a_1} \cdots \cdots \hat{s}_{a_{i-1}})
(\hat s_{a_i}\cdots s_{x+1})w_\lambda$.
Since the lowest addable corner of $\lambda$ has residue $x+1$,
the lowest addable corner of $\mu$ has residue $a_i$.
Therefore,
$w_{\eta^{(m-i+1)}} = 
(\hat{s}_{a_{i-1}} \cdots s_{a_i+1}  s_{a_i}) w_{\mu}$
has length $n-i+\ell(w_\lambda)$.  
\end{proof}

\begin{corollary}
\label{thm:Pieristrong}
For $1\leq m< n$ and $w\in \tS$,
\begin{equation}
\label{Pieristrong}
\xi_{c_{0,m}}\, \xi_w = \sum_{u\in \tS} \xi_{u}\,,
\end{equation}
where the sum is over $u$ such that $(\mfa(w),\mfa(u))$
is a horizontal strong $(n-1-m)$-strip.
\end{corollary}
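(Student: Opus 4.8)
The plan is to read Corollary~\ref{thm:Pieristrong} as nothing more than the homology Pieri rule \eqref{weakpieri} translated through the dictionary supplied by Theorem~\ref{prop:main}; essentially no new combinatorics is required once that theorem is in hand. First I would write the known weak rule as
$$
\xi_{c_{0,m}}\,\xi_w = \sum_{v}\xi_{vw}\,,
$$
where $v$ ranges over cyclically decreasing permutations with $\ell(v)=m$, $vw\in\tS$, and $\ell(vw)=\ell(w)+m$. Reindexing the sum by $u=vw$ (so that $v=uw^{-1}$), the coefficient of $\xi_u$ is nonzero exactly for those $u\in\tS$ such that $uw^{-1}$ is cyclically decreasing, $\ell(uw^{-1})=m$, and $\ell(u)=\ell(w)+m$.

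Next I would invoke Theorem~\ref{prop:main} with $\lambda=\mfa(w)$ and $\nu=\mfa(u)$, so that $w_\lambda=w$, $w_\nu=u$, and $w_\nu w_\lambda^{-1}=uw^{-1}$. The theorem asserts that $(\mfa(w),\mfa(u))$ is a horizontal strong strip precisely when $uw^{-1}$ is cyclically decreasing and $\ell(u)=\ell(w)+\ell(uw^{-1})$. These are exactly the two conditions isolated above, the remaining constraint $\ell(uw^{-1})=m$ serving only to fix which strong strip appears. (Containment $\mfa(w)\subset\mfa(u)$, required by the notion of a horizontal strong strip, comes for free from Lemma~\ref{lem:cychor} applied to the cyclically decreasing chain.) Thus the index set of \eqref{weakpieri} coincides, term by term, with the set of $u$ for which $(\mfa(w),\mfa(u))$ is a horizontal strong strip of the prescribed size.

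To pin down that size, I would use that $\mfa$ sends Grassmannian elements to $n$-cores with $deg(\mfa(w))=\ell(w)$ and $deg(\mfa(u))=\ell(u)=\ell(w)+m$. Substituting into the parameter prescribed by Definition~\ref{hstrongstrip} gives $n-1+deg(\mfa(w))-deg(\mfa(u))=n-1-m$, so the relevant strips are exactly the horizontal strong $(n-1-m)$-strips, matching the statement. The only genuine obstacle in this argument is the equivalence between cyclically decreasing weak chains and horizontal strong strips, and that is precisely the content of Theorem~\ref{prop:main}; everything else is the bookkeeping of the length-to-degree correspondence and the reindexing $u=vw$. I would finish by remarking that multiplicities cause no trouble here: unlike the strong Pieri rule \eqref{strongpieri}, each admissible $u$ contributes a single term, consistent with the multiplicity-free nature of the weak rule.
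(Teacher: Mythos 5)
Your argument is correct and is essentially identical to the paper's own proof: both reduce the corollary to the weak Pieri rule \eqref{weakpieri} reindexed by $u=vw$ and then apply Theorem~\ref{prop:main} to identify the admissible $u$ with horizontal strong $(n-1-m)$-strips. Your additional bookkeeping (the degree computation and the containment via Lemma~\ref{lem:cychor}) simply makes explicit what the paper leaves implicit.
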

\begin{proof}
A term $v=w_\nu w_\lambda^{-1}$
occurs in the summand of \eqref{weakpieri}
if and only if it is cyclically decreasing
of length $m$ and $\ell(w_\nu)=\ell(w_\lambda)+m$.
That is, if and only if $(\lambda,\nu)$ is
a horizontal strong $n-1-m$-strip by
Theorem~\ref{prop:main}.
\end{proof}

\begin{example}\label{strongprod}
The expansion $\xi_{c_{0,2}}\xi_{(3,1,1)}=
\xi_{(3,1,1,1)}+\xi_{(4,1,1)}+\xi_{(3,2,1)}$
follows from Example~\ref{ex:stronghs}
and Corollary~\ref{thm:Pieristrong}.
\end{example}

\section{Explicit representatives for Schubert classes}

The perspective of horizontal strong strips applies to the study of 
the (co)homology classes of the affine Grassmannian.
Here we derive a new combinatorial object with which to study the 
representatives for Schubert classes of $H^*(\Gr)$ and $H_*(\Gr)$.

\subsection{Polynomial realization of $H^*(\Gr)$ and $H_*(\Gr)$}

Quillen (unpublished) and Garland and Raghunathan~\cite{GRa} showed that $\Gr$ 
is homotopy-equivalent to the group $\Omega\, S\!U(n,\mathbb{C})$ of based
loops into $S\!U(n,\mathbb{C})$.  Results from \cite{Bott} can be
used to obtain a polynomial identification of $H^*(\Gr)$ and $H_*(\Gr)$ 
inside the ring of symmetric functions 
$\Lambda=\mathbb Z[h_1,h_2,\ldots,]$, where
$h_r=\sum_{1\leq i_1\leq \cdots\leq i_r} x_{i_1}\cdots x_{i_r}$.

Traditionally, bases for the space of symmetric function are
indexed by partitions.  Descriptions of the homology and 
cohomology ring are most natural in terms of 
the functions defined by setting $h_\lambda=h_{\lambda_1}\cdots h_{\lambda_\ell}$
and the monomial 
symmetric functions $m_\lambda$, defined for each 
partition $\lambda$ as the sum over $x^\alpha$ for
each distinct rearrangement $\alpha$ of the parts of $\lambda$.
The homology $H_*(\Gr)$ 
is identified by the subring $\Lambda_{(n)}$ of $\Lambda$
and the cohomology $H^*(\Gr)$ can be identified by the 
quotient $\Lambda^{(n)}$ where
$$
\Lambda_{(n)} = \mathbb Z[h_1,\ldots,h_{n-1}]
\quad \text{and}\quad
\Lambda^{(n)}=
\Lambda/\langle m_\lambda : \lambda_1\geq n\rangle\,.
$$
These spaces are naturally paired under the
Hall-inner product on $\Lambda$, defined by setting
$$
\langle h_\lambda,m_\mu\rangle =
\delta_{\lambda\mu}
$$
where $\delta_{\lambda\mu}=0$ when $\lambda\neq \mu$ and is 1 otherwise.

The Schur function basis for $\Lambda$ is self-dual with respect to $\langle,\rangle$.
Recall that this basis is a fundamental combinatorial tool to study tensor products 
of irreducible representations and intersections in the geomety of the
Grassmannian variety.  Schur functions are the
generating functions of tableaux.  A semi-standard
tableau  of weight $\mu=(\mu_1,\ldots,\mu_r)$ is a nested 
sequence of partitions 
\begin{equation}
\label{tab}
\emptyset=
\lambda^{(0)}\subset\lambda^{(1)}\subset\cdots \subset\lambda^{(r)}
\end{equation}
such that
$\lambda^{(i)}/\lambda^{(i-1)}$ is a horizontal $\mu_i$-strip.
It is generally represented with a filling of shape $\lambda^{(r)}$
by placing $i$ in the cells of the skew $\lambda^{(i)}/\lambda^{(i-1)}$.
When the weight of a tableau is $(1,1,\ldots,1)$
it is called {\it standard}.
$SSYT(\lambda,\mu)$ denotes 
the set of semi-standard tableaux of shape $\lambda$ and weight $\mu$
and the union over all weights is $SSYT(\lambda)$.
For any partition $\lambda$, the Schur function is
$$
s_\lambda = \sum_{T\in SSYT(\lambda)} x^{\weight(T)}\,.
$$

Refinements of the Schur basis for $\Lambda$
to bases for $\Lambda^{(n)}$ and $\Lambda_{(n)}$ give a 
combinatorial framework that can be applied to the cohomology 
and homology of $\Gr$.  Let $k=n-1$ throughout.
The basis of $k$-Schur functions for $\Lambda_{(n)}$ was
introduced in \cite{LMcore}, inspired by the Macdonald polynomial 
study of \cite{LLM} summarized in the introduction.
The basis 
for $\Lambda^{(n)}$ that is dual to the $k$-Schur basis with 
respect to the Hall-inner product
arose in the context of the quantum cohomology of Grassmannians 
in \cite{[LMhecke]}.  
Appealing to the algebraic 
nil-Hecke ring construction of Kostant and Kumar~\cite{KK1} and the work 
of Peterson~\cite{Pet}, 
Lam~\cite{Lam} proved that the Schubert classes $\xi_w$ and $\xi^w$ 
can be represented explicitly by the $k$-Schur functions
in $\Lambda_{(n)}$ and $\Lambda^{(n)}$, respectively. 

For our purposes, we define the $k$-Schur functions of $H^*(\Gr)$ as
the weight generating functions of a combinatorial object called {\it affine 
factorizations} and then introduce the homology $k$-Schur functions by duality.  
For any composition $\alpha\in \mathbb N^\ell$ with parts smaller than $n$ 
and $w\in \tilde S_n$ of length $|\alpha|$, an affine factorization 
for $w$ of weight $\alpha$ is a decomposition 
$$
w = v^{\ell}\cdots v^{1}\,,
$$
where $v^{i}$ is a cyclically decreasing permutation of length $\alpha_i$.
The representatives for the Schubert classes of $H^*(\Gr)$
are then defined, for $\lambda\in\Co$, by
\begin{equation}
\label{dualkschur}
\mathfrak S_\lambda^{(n)} = 
\sum_{w_\lambda=v^r\cdots v^1} x_1^{\ell(v^1)}\cdots x_r^{\ell(v^r)}\,,  
\end{equation}
over all affine factorizations $v^r\cdots v^1$ of $w$
(In \cite{LamStan}, 
by dropping the condition that $w_\lambda$ is affine Grassmannian,
these are extended to a more general family of 
functions that relate to the stable limits of Schubert polynomials
\cite{LSschub,[St]}).
The set $\{\mathfrak S_\lambda^{(n)}\}_{\lambda\in\Co}$ is a basis for $\Lambda^{(n)}$
and we take the $k$-Schur representatives for Schubert classes of $H_*(\Gr)$
to be the dual basis $\{s_\nu^{(n)}\}_{\nu\in \Co}$
with respect to the Hall-inner product.
That is, the $k$-Schur functions are defined by the relation
\begin{equation}
\label{def:kschur}
\langle \mathfrak S_{\lambda}^{(n)},s_\nu^{(n)}\rangle =\delta_{\lambda\nu}\,.
\end{equation}

\subsection{Affine Bruhat countertableaux}

Here we derive a new combinatorial object with which to study 
$H^*(\Gr)$ and $H_*(\Gr)$ by considering the 
association between cyclically decreasing permutations
and horizontal strong strips that was made in Section~\ref{sec:pieri}.
Recall that the sequence \eqref{tab} can be represented by 
its \textit{countertableau} filling,
derived by placing an $r+1-i$ in $\lambda^{(i)}/\lambda^{(i-1)}$.

\begin{definition}
\label{def:abc}
Fix composition $\alpha=(\alpha_1,\ldots,\alpha_r)$ 
with $\alpha_i<n$ and $n$-core $\lambda^{(r)}$ 
of degree $|\alpha|$.  An \textit{affine Bruhat countertableau} 
of shape $\lambda^{(r)}$ and weight $\alpha$ is a skew tableau
$\lambda^{(r)}=\mu^{(0)}\subset\cdots\subset\mu^{(r)}$
such that 
\begin{equation}
\label{abc2weak}
\mu^{(x)}=(\mu^{(1)}_1,\ldots,\mu^{(x-1)}_{x-1},
\lambda_1^{(r-x)}+n-1,\lambda^{(r-x)})\,,
\end{equation}
where $(\lambda^{(x-1)},\lambda^{(x)})$ is
a horizontal strong $(n-1-\alpha_x)$-strip 
for $1\leq x\leq r$ and $\lambda^{(0)}=\emptyset$.
%
\end{definition}

An affine Bruhat countertableau (or $ABC$) is represented by 
its skew countertableau filling where $r-x+1$ is placed in 
the cells of $\mu^{(x)}/\mu^{(x-1)}$.
We denote the set of $ABC$'s of shape $\lambda$ and weight $\alpha$ 
by $ABC(\lambda,\alpha)$ and let $ABC(\lambda)$ be their union 
over all weights $\alpha$.

\begin{example}
\label{ex1abc} 
For $n=6$, $\mu^{(0)}=(4,3,0)\subset (9,4,2)\subset (9,8,3)
\subset (9,8,5)=\mu^{(3)}$ is an $ABC$ of shape $(4,3)$ and 
weight $(3,3,1)$, represented by its countertableau filling
\begin{displaymath} A =
\text{\footnotesize
\tableau*[scY]{3&3&2&1&1\cr&&&3&
2& 2& 2&2 
\cr &&&&3&3& 3& 3&3
}}
\,.
\end{displaymath}
The corresponding strong strips are
\begin{displaymath}
\lambda^{(3)}=
\text{\tiny\tableau*[sbY]{&&\cr&&&}}
\lessdot_B
\text{\tiny\tableau*[sbY]{3\cr&&\cr &&&&3}}
\lessdot_B
\text{\tiny\tableau*[sbY]{3&3\cr&&\cr&&&&3&3}}
\lessdot_B
\text{\tiny\tableau*[sbY]{3&3\cr&&\cr&&&&3&3& \fr[l,t,b] 3 & \fr[t,b,r] 3}}
\lessdot_B
\text{\tiny\tableau*[sbY]{3&3\cr&&&3\cr&&&&3&3& \fr[l,t,b] 3 & \fr[t,b,r]
3 & 3}}.
=(\lambda_1^{(2)}+5,\lambda^{(2)})
\,.
\end{displaymath}
\begin{displaymath}
\lambda^{(2)}=
\text{\tiny\tableau*[sbY]{&\cr&&&}} \lessdot_B
\text{\tiny\tableau*[sbY]{&\cr&&&& \fr[l,t,b] 2 & \fr[t,b] 2 & \fr[t,b,r] 2}}
\lessdot_B
\text{\tiny\tableau*[sbY]{&&2\cr&&&& \fr[l,t,b] 2 & \fr[t,b] 2 &
\fr[t,b,r] 2 &2}}
=(\lambda_1^{(1)}+5,\lambda^{(1)})
\,.
\end{displaymath}
\begin{displaymath}
\lambda^{(1)}=
\text{\tiny\tableau*[sbY]{&&}} \lessdot_B \text{\tiny\tableau*[sbY]{&&&1}}
\lessdot_B \text{\tiny\tableau*[sbY]{&&&1&1}}
=(\lambda^{(0)}+5)
\,.
\end{displaymath}
\end{example}

\begin{lemma}
\label{lem:abcseq}
Relation \eqref{abc2weak} uniquely identifies
the element $\mu^{(0)}\subset\cdots\subset\mu^{(r)}$ in
$ABC(\mu^{(0)},\alpha)$ with the sequence
$\emptyset=\lambda^{(0)} \subset \lambda^{(1)} \subset \cdots \subset
\lambda^{(r)}=\mu^{(0)}$
where $(\lambda^{(p-1)},\lambda^{(p)})$ is a horizontal 
strong $(n-1-\alpha_{p})$-strip,
\end{lemma}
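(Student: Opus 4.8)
The plan is to present \eqref{abc2weak} as a bijection by writing down its inverse explicitly and checking that the two maps compose to the identity; essentially all of the content sits in the observation that \eqref{abc2weak} stores the parts of $\lambda^{(r-x)}$ in the tail of $\mu^{(x)}$. First I would unpack \eqref{abc2weak} entrywise: reading each $\mu^{(x)}$ in non-increasing order, the relation asserts $\mu^{(x)}_i=\mu^{(i)}_i$ for $1\le i\le x-1$, $\mu^{(x)}_x=\lambda^{(r-x)}_1+n-1$, and $\mu^{(x)}_{x+j}=\lambda^{(r-x)}_j$ for every $j\ge 1$. Thus the first $x$ parts of $\mu^{(x)}$ are the ``frozen'' diagonal values $\mu^{(1)}_1,\dots,\mu^{(x)}_x$, and deleting them returns $\lambda^{(r-x)}=(\mu^{(x)}_{x+1},\mu^{(x)}_{x+2},\dots)$ exactly. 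This furnishes the candidate inverse $\mu^{(\bullet)}\mapsto\lambda^{(\bullet)}$ and at the same time proves the uniqueness asserted by the lemma: since $\lambda^{(r-x)}$ is read off from $\mu^{(x)}$ alone, at most one sequence of strong strips can produce a given chain $\mu^{(0)}\subset\cdots\subset\mu^{(r)}$.

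Next I would check that the forward assignment \eqref{abc2weak} really lands in $ABC(\mu^{(0)},\alpha)$, i.e. that each $\mu^{(x)}$ is a partition and that $\mu^{(0)}\subset\mu^{(1)}\subset\cdots\subset\mu^{(r)}$. Both facts reduce to the containments supplied by the strips, $\lambda^{(p-1)}\subset\lambda^{(p)}\subset R(n-1,\lambda^{(p-1)})$, where $R(n-1,\lambda^{(p-1)})=(\lambda^{(p-1)}_1+n-1,\lambda^{(p-1)})$ and the right-hand containment comes from the chain \eqref{schain} running from $\lambda^{(p)}$ up to $R(n-1,\lambda^{(p-1)})$. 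The non-increasing property of $\mu^{(x)}$ follows because the diagonal values $\mu^{(i)}_i=\lambda^{(r-i)}_1+n-1$ are non-increasing in $i$ (the $\lambda$'s being nested) and dominate the tail $\lambda^{(r-x)}$. For the chain, comparing $\mu^{(x-1)}$ with $\mu^{(x)}$ and setting $p=r-x+1$, the rows $i\le x-1$ agree by the frozen-diagonal identity, while $\mu^{(x-1)}_x=\lambda^{(p)}_1\le\lambda^{(p-1)}_1+n-1=\mu^{(x)}_x$ and $\mu^{(x-1)}_{x+j}=\lambda^{(p)}_{j+1}\le\lambda^{(p-1)}_j=\mu^{(x)}_{x+j}$ are precisely the defining inequalities of $\lambda^{(p)}\subset R(n-1,\lambda^{(p-1)})$.

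Finally I would confirm the two maps are mutually inverse: \eqref{abc2weak} followed by tail-extraction returns the original $\lambda$-sequence by the identity $\mu^{(x)}_{x+j}=\lambda^{(r-x)}_j$, and the reverse composition returns the original chain because an $ABC$ is \emph{defined} to satisfy \eqref{abc2weak} for its strong strips, which tail-extraction recovers; the horizontal strong strip condition on the recovered $\lambda^{(p)}$ is then inherited directly from the $ABC$. I expect the one genuinely delicate point to be the row-by-row containment check for $\mu^{(x-1)}\subset\mu^{(x)}$, where one must track how the single long part $\lambda^{(p-1)}_1+n-1$ of $R(n-1,\lambda^{(p-1)})$ absorbs the new bottom row created in passing from $\mu^{(x-1)}$ to $\mu^{(x)}$; everything else is bookkeeping dictated by \eqref{abc2weak}.
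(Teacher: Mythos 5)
Your route is essentially the paper's: read each $\lambda^{(r-x)}$ off the tail of $\mu^{(x)}$ via \eqref{abc2weak} (which gives uniqueness and the inverse map at once), and then check that the chain of $\mu^{(x)}$'s built from a sequence of horizontal strong strips really is an element of $ABC(\mu^{(0)},\alpha)$. Your entrywise bookkeeping is correct, and your identification of the inequalities for $\mu^{(x-1)}\subset\mu^{(x)}$ with those of $\lambda^{(p)}\subset R(n-1,\lambda^{(p-1)})$, $p=r-x+1$, is exactly the right reduction: rows $1,\dots,x-1$ agree and the remaining rows of $\mu^{(x)}$ and $\mu^{(x-1)}$ are $R(n-1,\lambda^{(p-1)})$ and $\lambda^{(p)}$ respectively.

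There is one omission, and it happens to be the only thing the paper's proof actually verifies. By Definition~\ref{def:abc} an $ABC$ is a skew \emph{tableau}, so membership in $ABC(\mu^{(0)},\alpha)$ requires each $\mu^{(x)}/\mu^{(x-1)}$ to be a \emph{horizontal strip} (at most one cell per column), not merely that the $\mu^{(x)}$ are nested partitions; your checklist stops at containment. The repair is short given what you have already set up: your row computation shows $\mu^{(x)}/\mu^{(x-1)}$ is a vertical translate of $R(n-1,\lambda^{(p-1)})/\lambda^{(p)}$, and since $\lambda^{(p-1)}\subset\lambda^{(p)}$ this skew shape sits inside $R(n-1,\lambda^{(p-1)})/\lambda^{(p-1)}$, which is a horizontal strip (one cell added atop every column of $\lambda^{(p-1)}$ together with $n-1$ new cells in the bottom row). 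You should add that sentence; without it the forward map has not been shown to land in $ABC(\mu^{(0)},\alpha)$, which is the substantive half of the lemma.
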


\begin{proof}
The forward direction is immediate from the definition of $ABC$.
On the other hand, consider a sequence
$\emptyset=\lambda^{(0)} \subset \lambda^{(1)} \subset \cdots \subset
\lambda^{(r)}$ where $(\lambda^{(p-1)},\lambda^{(p)})$ is a horizontal 
strong $(n-1-\alpha_{p})$-strip for $1\leq p \leq r$.
Let $\mu^{(0)}=\lambda^{(r)}$, and for $1\leq p\leq r$, define
$\mu^{(p)}$ by Relation~\eqref{abc2weak}.
From this, we find that
$$
\mu^{(p)}=(\lambda_1^{(r-1)}+n-1,\ldots,
\lambda_1^{(r-p+1)}+n-1,R(n-1,\lambda^{(r-p)}))
\,,
$$
and need only to confirm that $\mu^{(p)}/\mu^{(p-1)}$ is a horizontal strip
for $1\leq p\leq r$.  Note that
$$
\mu^{(p-1)}=(\lambda_1^{(r-1)}+n-1,\ldots,
\lambda_1^{(r-p+1)}+n-1,\lambda^{(r-p+1)})\,.
$$
The claim follows by recalling that when
$(\lambda^{(r-p)},\lambda^{(r-p+1)})$ is a horizontal strong strip,
$R(n-1,\lambda^{(r-p)})/\lambda^{(r-p+1)}$ is a horizontal strip.
\end{proof}

\begin{theorem}
\label{dualkSchurABC}
For any $n$-core $\lambda$,
\begin{equation}
\label{mainresultindual}
\mathfrak{S}_{\lambda}^{(n)} =
\sum_{A\in ABC(\lambda)}
x^{\text{weight}(A)}
\,.
\end{equation}
\end{theorem}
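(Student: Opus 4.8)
The plan is to unwind both sides of \eqref{mainresultindual} into generating functions over the same combinatorial data and then match them, with Theorem~\ref{prop:main} serving as the dictionary between horizontal strong strips and cyclically decreasing permutations. First I would use Lemma~\ref{lem:abcseq} to discard the skew-countertableau packaging and keep only its content: an element of $ABC(\lambda,\alpha)$ is the same thing as a nested sequence of $n$-cores $\emptyset=\lambda^{(0)}\subset\lambda^{(1)}\subset\cdots\subset\lambda^{(r)}=\lambda$ in which each pair $(\lambda^{(p-1)},\lambda^{(p)})$ is a horizontal strong $(n-1-\alpha_p)$-strip. Hence the right-hand side of \eqref{mainresultindual} is $\sum_\alpha\sum x^\alpha$ over all such core sequences, while the left-hand side \eqref{dualkschur} is $\sum x^{(\ell(v^1),\dots,\ell(v^r))}$ over affine factorizations $w_\lambda=v^r\cdots v^1$ into cyclically decreasing pieces. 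So it suffices to exhibit a weight-preserving bijection between affine factorizations of $w_\lambda$ and these core sequences.

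The bridge is Theorem~\ref{prop:main}: $(\mu,\eta)$ is a horizontal strong strip exactly when $w_\eta w_\mu^{-1}$ is cyclically decreasing with $\ell(w_\eta)=\ell(w_\mu)+\ell(w_\eta w_\mu^{-1})$, and tracking the strip size through Definition~\ref{hstrongstrip} (equivalently Corollary~\ref{thm:Pieristrong}) shows a horizontal strong $(n-1-a)$-strip corresponds precisely to a cyclically decreasing factor of length $a$. Given a core sequence I set $v^p=w_{\lambda^{(p)}}w_{\lambda^{(p-1)}}^{-1}$: each is cyclically decreasing of length $\alpha_p$, the lengths add along the chain, and the product telescopes to $w_{\lambda^{(r)}}w_{\lambda^{(0)}}^{-1}=w_\lambda$, giving a reduced affine factorization of weight $\alpha$. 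Conversely, from $w_\lambda=v^r\cdots v^1$ I would set $\lambda^{(p)}=\mfa(v^p\cdots v^1)$ and recover the horizontal strong strips via Theorem~\ref{prop:main}. Since the two maps are manifestly mutually inverse and both send weight $\alpha$ to the monomial $x^\alpha$, summing yields \eqref{mainresultindual}.

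The one point requiring care — and the step I expect to be the crux — is that the partial products $u_p=v^p\cdots v^1$ really are affine Grassmannian, so that $\mfa(u_p)$ is defined and lands in $\Co$; otherwise the reverse map is ill-posed and the two sums need not range over comparable objects. I would dispatch this with a short auxiliary lemma: if $w\in\tS$ and $w=ab$ is length-additive, then the suffix $b$ is again a minimal coset representative, since $b s_i<b$ for some $i\in\{1,\dots,n-1\}$ would force $\ell(ws_i)\le\ell(a)+\ell(b s_i)=\ell(w)-1$, hence $ws_i<w$, contradicting $w\in\tS$. Applying this to $w_\lambda=(v^r\cdots v^{p+1})(v^p\cdots v^1)$ makes every $u_p$ Grassmannian, and reading lengths off suffixes of a reduced word gives $\ell(u_p)=\ell(u_{p-1})+\ell(v^p)$, which is exactly the length hypothesis Theorem~\ref{prop:main} requires. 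With that in place, the bijection is clean and the remaining verifications (telescoping, weight matching) are routine bookkeeping.
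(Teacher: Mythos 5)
Your proposal is correct and follows essentially the same route as the paper: both reduce an $ABC$ to its core sequence via Lemma~\ref{lem:abcseq}, define the factorization by $v^i=w_{\lambda^{(i)}}w_{\lambda^{(i-1)}}^{-1}$, and invoke Theorem~\ref{prop:main} as the equivalence between horizontal strong strips and length-additive cyclically decreasing factors (the paper organizes this as an induction on the number of parts rather than a direct telescoping bijection). Your auxiliary observation that length-additive suffixes of a Grassmannian element remain Grassmannian is a point the paper leaves implicit in its induction, and it is a worthwhile addition rather than a divergence.
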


\begin{proof}
Fix a composition $\alpha$ of length $r$ with parts smaller than $n$ 
and an $n$-core $\lambda$ such that $deg(\lambda)=|\alpha|$.  
Define a map on domain $ABC(\lambda,\alpha)$ by
sending
$$
\Theta: A \longmapsto v^r\cdots v^1
\quad\text{where $v^i=w_{\lambda^{(i)}}w_{\lambda^{(i-1)}}^{-1}$}\,,
$$
for the unique sequence 
$\lambda^{(0)}\subset\cdots \subset\lambda^{(r)}$ 
associated to $A$ via Lemma~\ref{lem:abcseq}.
We claim that $\Theta$ is a bijection whose image is the
set of affine factorizations of $w_\lambda$ of weight $\alpha$.
For this, we must prove that
$v^{i}$ is a cyclically decreasing permutation of length $\alpha_i$, 
$\ell(w_\lambda)=|\alpha|$ and that $\Theta$ is a bijection.  

When $\alpha=(\alpha_1)$ has only one part,
the unique element of $ABC(\lambda,\alpha)$
corresponds to the sequence $\emptyset\subset\lambda$ where $\lambda=(\alpha_1)$.
Its image under $\Theta$ is $v^1=w_{(\alpha_1)}=s_{\alpha_1-1}\cdots s_0$,
the only decomposition of $w_\lambda$ into one cyclically decreasing permutation
$v^1$ of length $\alpha_1$.  
By induction, assume that the sequence $\emptyset=\lambda^{(0)} 
\subset \cdots \subset \lambda^{(r-1)}$,
where $(\lambda^{(j-1)},\lambda^{(j)})$ is a horizontal
strong $(n-1-\alpha_j)$-strip,
corresponds uniquely to a decomposition of 
$w_{\lambda^{(r-1)}} = v^{r-1}\cdots v^1$
into cyclically decreasing permutations $v^j$ of length
$\alpha_j$, for $j<r$.
Since
$v^r=w_{\lambda^{(r)}}w_{\lambda^{(r-1)}}^{-1}$ is cyclically decreasing of length 
$\alpha_r$ if and only if $(\lambda^{(r-1)},\lambda^{(r)})$ is a horizontal strong
$(n-1-\alpha_r)$-strip by Theorem~\ref{prop:main}, the result
follows by induction.
\end{proof}

Because $\mathfrak S_\lambda^{(n)}$ is a symmetric
function, the coefficient of $x^\alpha$ in \eqref{mainresultindual}
equals the coefficient of $m_\mu$, where $\mu$ is the
non-increasing rearrangement of the parts of $\alpha$.
The set of monomial symmetric functions indexed by elements in 
$\mathcal P^n = \{ \lambda \in \mathcal P : \lambda_1 < n \}$ is
a basis for $\Lambda^{(n)}$ and in fact,
the transition matrix from $\{\mathfrak S_\lambda^{(n)}\}_{\lambda
\in\Co}$ to $\{m_\mu\}_{\mu\in\mathcal P^n}$ is unitriangular.
The unitriangularity relation is described by an identification 
from \cite{LMcore} of 
$n$-cores with partitions of $\mathcal P^n$, given by
the map 
$$\mfc: \mathcal P^n \longrightarrow \Co\,,$$ 
where $\mfc^{-1}(\gamma)=(\lambda_1,\ldots,\lambda_\ell)$ 
and $\lambda_i$ is the 
number of cells in row $i$ of $\gamma$ with hook-length
smaller than $n$.
The unitriangularity relation is taken with
respect to the dominance order on partitions of the same degree, 
where $\lambda\lhd\mu$ 
when $\lambda_1+\cdots+\lambda_s\leq \mu_1+\cdots+\mu_s$ for
all $s$.  It was proven in \cite{LMcore,LamStan} that for 
any $\lambda\in\mathcal P^n$,
\begin{equation}
\label{uni}
\mathfrak S_{\core(\lambda)}^{(n)} = 
m_{\lambda}+ \sum_{\substack{\mu\in\mathcal P^n \\\
\mu\,\, \not\!\!\rhd \lambda}} K_{\lambda\mu}^n\,m_\mu
\,.
\end{equation}

\begin{corollary}
\label{cor:uni}
For $\lambda,\mu\in\mathcal P^n$,
$K_{\lambda\mu}^{n}$ is the number of
$ABC$'s of shape $\core(\lambda)$ and weight $\mu$.
In particular, there is a unique
$ABC$ of shape $\core(\lambda)$ and weight $\lambda$
and $ABC(\core(\lambda),\mu)=\emptyset$ when
$\mu\not\!\!\lhd\lambda$.
\end{corollary}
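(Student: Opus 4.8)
The plan is to obtain all three assertions by comparing two monomial-basis expansions of the single symmetric function $\mathfrak S^{(n)}_{\core(\lambda)}$: the combinatorial one furnished by Theorem~\ref{dualkSchurABC} and the unitriangular one recorded in \eqref{uni}. First I would convert the $x$-monomial generating function \eqref{mainresultindual} into an expansion in the monomial symmetric functions. Since $\mathfrak S^{(n)}_{\core(\lambda)}$ is symmetric, the coefficient of $x^\alpha$ depends only on the partition rearrangement of $\alpha$, so for a partition $\mu\in\mathcal P^n$ the coefficient of $m_\mu$ is precisely the number of $A\in ABC(\core(\lambda))$ with $\weight(A)=\mu$, that is $|ABC(\core(\lambda),\mu)|$. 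This is the one genuinely computational step, and it is just the standard passage from a weight generating function to its monomial coefficients.

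Next I would match coefficients against \eqref{uni}, which presents the same function as $m_\lambda+\sum_{\mu\not\rhd\lambda}K_{\lambda\mu}^n\,m_\mu$. Because $\{m_\mu\}_{\mu\in\mathcal P^n}$ is a basis of $\Lambda^{(n)}$, the two monomial expansions must agree term by term, yielding $K_{\lambda\mu}^n=|ABC(\core(\lambda),\mu)|$ for every $\mu$, with the leading coefficient $K_{\lambda\lambda}^n=1$ recording the $m_\lambda$ term. This already establishes the first assertion of the corollary.

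The two remaining ``in particular'' statements then fall out of the unitriangular shape of \eqref{uni}. The coefficient of $m_\lambda$ equals $1$, whence $|ABC(\core(\lambda),\lambda)|=1$, giving the uniqueness claim; and since the transition matrix $\{\mathfrak S^{(n)}_\lambda\}\to\{m_\mu\}$ is unitriangular with respect to the dominance order, $K_{\lambda\mu}^n$, and therefore $|ABC(\core(\lambda),\mu)|$, vanishes whenever $\mu\not\lhd\lambda$. I do not anticipate a serious obstacle here: the corollary is essentially a repackaging of Theorem~\ref{dualkSchurABC} together with the already-cited expansion \eqref{uni}. The only point demanding care is keeping the dictionary between the $x$-weights carried by the $ABC$'s and the $\{m_\mu\}$ basis correct, and—relatedly—keeping the direction of the (reflexive) dominance order $\lhd$ straight, so that the support one reads off from \eqref{uni} matches exactly the stated range $\mu\not\lhd\lambda$ rather than its reverse.
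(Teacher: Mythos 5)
Your proposal is correct and is essentially the paper's own (implicit) argument: the corollary is stated without a separate proof precisely because it amounts to reading off the $m_\mu$-coefficients of $\mathfrak S^{(n)}_{\core(\lambda)}$ from Theorem~\ref{dualkSchurABC} and matching them against the unitriangular expansion \eqref{uni}, exactly as you do. The one point you rightly flag---reconciling the range $\mu\not\rhd\lambda$ appearing in \eqref{uni} with the range $\mu\not\lhd\lambda$ asserted in the corollary---is the only place where the cited triangularity result must be invoked in its full dominance-order form, and your treatment of it matches the paper's intent.
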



\begin{proposition}
\label{prop:abckinf}
\cite{[LMproofs]}
For an $n$-core $\lambda$ where $deg(\lambda)<n$,
we have $\mathfrak S_\lambda^{(n)}= s_\lambda$.
\end{proposition}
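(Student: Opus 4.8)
The plan is to prove the identity at the level of generating functions by matching the combinatorial chains that index the two sides. By Theorem~\ref{dualkSchurABC}, $\mathfrak S_\lambda^{(n)}=\sum_{A\in ABC(\lambda)}x^{\weight(A)}$, and by Lemma~\ref{lem:abcseq} each $A$ of weight $\alpha$ is recorded by a nested sequence $\emptyset=\lambda^{(0)}\subset\cdots\subset\lambda^{(r)}=\lambda$ of $n$-cores in which $(\lambda^{(p-1)},\lambda^{(p)})$ is a horizontal strong $(n-1-\alpha_p)$-strip. By Theorem~\ref{prop:main} together with Lemma~\ref{lem:cychor}, this condition says precisely that $\lambda^{(p)}/\lambda^{(p-1)}$ is a horizontal strip whose cells carry $\alpha_p$ \emph{pairwise distinct} $n$-residues, the weight recording the number of distinct residues. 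On the other hand $s_\lambda=\sum_{T\in SSYT(\lambda)}x^{\weight(T)}$ is indexed by nested sequences $\emptyset=\lambda^{(0)}\subset\cdots\subset\lambda^{(r)}=\lambda$ of \emph{partitions} in which each $\lambda^{(p)}/\lambda^{(p-1)}$ is a horizontal strip, the weight now recording the number of cells. Thus it suffices to show that, when $deg(\lambda)<n$, these two families of chains coincide and their weights agree step by step.

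The key reduction is the following claim: an $n$-core $\lambda$ with $deg(\lambda)<n$ satisfies $|\lambda|<n$. Granting it, two consequences follow at once. First, every partition contained in $\lambda$ has fewer than $n$ cells, hence all its hook-lengths are smaller than $n$ and it is automatically an $n$-core; so the intermediate shapes of an $SSYT$ chain are legitimate shapes for the $ABC$ construction. Second, the corner hook-length $\lambda_1+\ell(\lambda)-1$ is at most $|\lambda|<n$, so the contents of the cells of $\lambda$ occupy an integer interval of width strictly less than $n$. Since the cells of any skew horizontal strip have pairwise distinct contents -- if two cells of a horizontal strip $\nu/\mu$ lay on one diagonal, the interlacing inequality $\mu_i\ge\nu_{i+1}$ would be violated -- it follows that the cells of every horizontal strip inside $\lambda$ have pairwise distinct residues, and their number equals the number of distinct residues among them. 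Hence the distinct-residue requirement in the $ABC$ description becomes vacuous: every $SSYT$ chain is an $ABC$ chain and conversely, with the two weights equal in each step. Comparing generating functions then gives $\mathfrak S_\lambda^{(n)}=s_\lambda$.

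It remains to prove the claim, which I expect to be the main obstacle. I would argue through the bijection $\mfc\colon\mathcal P^n\to\Co$ of \cite{LMcore} and the fact that $deg(\lambda)=|\mfc^{-1}(\lambda)|$. Writing $\beta=\mfc^{-1}(\lambda)$, the hypothesis $deg(\lambda)<n$ gives $|\beta|<n$, so every hook-length of $\beta$ is smaller than $n$; thus $\beta$ is itself an $n$-core and each of its cells is counted in forming $\mfc^{-1}(\beta)$, whence $\mfc^{-1}(\beta)=\beta$. Since $\mfc^{-1}$ is injective on $\Co$ and $\mfc^{-1}(\mfc(\beta))=\beta=\mfc^{-1}(\beta)$, we conclude $\lambda=\mfc(\beta)=\beta$ and therefore $|\lambda|=|\beta|<n$. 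The delicate point to verify carefully is exactly that the passage from the $(n-1)$-bounded partition $\beta$ back to its $n$-core fixes partitions with fewer than $n$ cells; once this is secured, the remainder of the argument is bookkeeping with the correspondences already established in Theorem~\ref{prop:main} and Lemma~\ref{lem:cychor}.
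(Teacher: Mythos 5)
Your proposal is correct and follows essentially the same route as the paper: both sides are reduced to chains of shapes via Lemma~\ref{lem:abcseq}, and Theorem~\ref{prop:main} together with Lemma~\ref{lem:cychor} identifies horizontal strong strips with horizontal strips carrying distinct residues, so that $ABC(\lambda,\mu)$ and $SSYT(\lambda,\mu)$ coincide when $deg(\lambda)<n$. The only difference is cosmetic: where the paper simply asserts that no two top-of-column cells of $\lambda^{(i)}\subset\lambda$ share a residue, you first establish $|\lambda|<n$ via the $\mfc$ bijection and deduce the distinct-residue property from the content interval having width less than $n$, which is a slightly more explicit justification of the same step.
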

\begin{proof}
Given an $n$-core $\lambda$ where $deg(\lambda)<n$
and a partition $\mu$ of length $r$ where
$|\mu|=deg(\lambda)$, we shall
prove that there is a bijection between 
$ABC(\lambda,\mu)$ and $SSYT(\lambda,\mu)$.

By Lemma~\ref{lem:abcseq}, $A\in ABC(\lambda,\mu)$ is 
defined by a sequence of $n$-cores
\begin{equation}
\label{lambdaseq}
\emptyset\subset\lambda^{(1)}\subset\cdots\subset\lambda^{(r)}=\lambda
\end{equation}
where $(\lambda^{(i-1)},\lambda^{(i)})$ is a horizontal strong 
$(n-1-\mu_i)$-strip.   Note in particular that
$n-1-\mu_i=n-1+deg(\lambda^{(i-1)})-deg(\lambda^{(i)})$.
Since Theorem~\ref{prop:main} implies that
$w_{\lambda^{(i)}}w_{\lambda^{(i-1)}}^{-1}$ is cyclically decreasing
of length $\ell(w_{\lambda^{(i)}})-\ell(w_{\lambda^{(i-1)}})=\mu_i$,
there are $\mu_i$ distinct residues 
labelling the cells of the horizontal strip $\lambda^{(i)}/\lambda^{(i-1)}$
by Lemma~\ref{lem:cychor}.
If $\lambda\in\Co$ with $deg(\lambda)<n$, then no two cells that
lie at the top of their column in $\lambda^{(i)}\subset\lambda$ 
can have the same $n$-residue.  Therefore, 
$\lambda^{(i)}/\lambda^{(i-1)}$ is a horizontal $\mu_i$-strip,
implying that \eqref{lambdaseq} is an element of $SSYT(\lambda,\mu)$.

On the other hand, given a semi-standard tableau $T\in SSYT(\lambda,\mu)$
defined by \eqref{lambdaseq}, it suffices to show that
$(\lambda^{(i-1)},\lambda^{(i)})$ is a horizontal strong 
$n-1-\mu_i$-strip for all $i$.   By definition of semi-standard tableau,
$\lambda^{(i)}/\lambda^{(i-1)}$ is a horizontal $\mu_i$-strip for $1 \leq
i \leq r$.  Since $deg(\lambda) < n$, $\mu_i$ distinct residues label the 
cells of $\lambda^{(i)}/\lambda^{(i-1)}$ and $deg(\lambda^{(i)})-deg(\lambda^{(i-1)})
=\mu_i$.  Therefore, by Lemma~\ref{lem:cychor},
$w_{\lambda^{(i)}}w_{\lambda^{(i-1)}}^{-1}$ is
cyclically decreasing with length $\mu_i$.  
Theorem \ref{prop:main} then implies the result.
\end{proof}

An $ABC$ countertableau $A$ comes equipped with a ribbon tiling
specified by its defining strong strips.
Let the {\it column residue} of every cell in column $c$ 
be $c-1\pmod n$.  Recall that the bijection $\phi$ identifies
a horizontal strong $(n-1-m)$-strip $(\lambda^{(i)},\lambda^{(i+1)})$ 
with a reduced word $j_1\cdots j_m$ for
$w_{\lambda^{(i+1)}}w_{\lambda^{(i)}}^{-1}$
by a tiling of $R(n-1,\lambda^{(i)})/\lambda^{(i+1)}$ with
ribbons of height one that are determined by placing a tail
in the rightmost cell of the bottom row with 
residue $t \in \{x-1,\ldots,x+1\}\backslash\{j_1,\ldots,j_{m}\}$
for $x = \lambda_1^{(i)}-1 \pmod n$.
Therefore, since $A$ is given by $\mu^{(0)}\subset\cdots \subset \mu^{(r)}$
where
$\mu^{(r+1-i)}=(\mu_1^{(r-i)},\ldots,\mu_{r-i}^{(r-i)},R(n-1,\lambda^{(i)}))$,
these ribbons tile row $r+1-i$ of $A$ where the residues of their tails are
now column residues and ribbons containing letter $j>i$ in row $i$ 
are copies of the ribbons specified in row $j$.

A diagram derived from an $ABC$ $A$ of weight $\mu\in\mathcal P^n$
called the {\it extension}  $ext(A)$ is a useful tool to convert
between affine factorizations and $ABC$'s.  It is formed by appending a ribbon 
of length $\lambda^{(x)}_1-\lambda^{(x-1)}_1+1$ to the end of row $x$,
and then deleting any letter larger than $x$ in row $x$
and the tail of every ribbon containing $x$. 

\begin{example}\label{ssExtABC}
For $n=6$ and an $ABC$ $A$ of weight $(3,3,3,1)$, we have
$$
\text{\footnotesize{
\tableau*[scY]{ &4&2&1&1 \cr &&4&3& \fr[l,t,b] 2& \fr[t,b,] 2
& \fr[t,b,r] 2&2 \cr & & & \fr[l,t,b] 4& \fr[t,b,r] 4&4& \fr[l,t,b] 3& \fr[t,b,r] 3&3 \cr & & & & & &4&4& \fr[l,t,b] 4& \fr[t,b,r] 4&4}}}
\; \longmapsto \;
\text{\footnotesize{\tableau*[scY]{ &&&&& & 1& 1& 1\cr  
&&&& &  2&  2&& &  2\cr
 & & &&  &&& 3&& &  3& 3\cr  
& & & & & &&&  & 4}}}
$$
\end{example}


\begin{lemma}
\label{lem:abcres}
For $A\in ABC(\lambda)$, $j_1\cdots j_\ell$ is a reduced word for
$v^i$ in the affine factorization $\Theta(A)=v^r\cdots v^1$ if 
and only if the cells containing $i$ in 
$ext(A)$ have column residues $\{j_1,\ldots,j_\ell\}$.
\end{lemma}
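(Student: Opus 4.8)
The plan is to turn the statement into a single set equality of residues and then extract it from the ribbon tiling that underlies $ext(A)$. By Lemma~\ref{lem:abcseq} the $ABC$ $A$ is the sequence $\emptyset=\lambda^{(0)}\subset\cdots\subset\lambda^{(r)}$, and $\Theta(A)=v^r\cdots v^1$ with $v^i=w_{\lambda^{(i)}}w_{\lambda^{(i-1)}}^{-1}$. Theorem~\ref{prop:main} guarantees each $v^i$ is cyclically decreasing, so its reduced words are governed entirely by the set of residues occurring in them; hence verifying that $j_1\cdots j_\ell$ is a reduced word for $v^i$ amounts to verifying that $\{j_1,\dots,j_\ell\}$ is exactly that residue set. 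By Lemma~\ref{lem:cychor} this set is the collection of $n$-residues carried by the cells of the horizontal strip $\lambda^{(i)}/\lambda^{(i-1)}$. Thus it suffices to prove that the column residues of the cells labelled $i$ in $ext(A)$ form a set of the correct size $\ell=\alpha_i$ equal to the $n$-residues of $\lambda^{(i)}/\lambda^{(i-1)}$.

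First I would locate letter $i$ inside $A$ and record its tiling. From \eqref{abc2weak} one computes $\mu^{(r+1-i)}/\mu^{(r-i)}=R(n-1,\lambda^{(i-1)})/\lambda^{(i)}$, placed so that the bottom row of $R(n-1,\lambda^{(i-1)})$ occupies the corresponding row of $A$; this is where the $i$'s sit. The bijection $\phi$ tiles this skew by height-one ribbons whose tails lie in that bottom row, and since a cell in the bottom row has column residue equal to its $n$-residue, the tail residues are precisely the complementary set $\{x-1,\dots,x+1\}\setminus\{j_1,\dots,j_\ell\}$ with $x=\lambda^{(i-1)}_1-1\pmod n$. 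The remaining cells of the band are copies of ribbons belonging to the larger letters $j>i$, recorded recursively by the rows above, as noted before the lemma.

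The heart of the argument is to show that the three operations defining $ext(A)$ turn this complementary, ribbon-tiled picture into one whose surviving $i$-cells read off $\{j_1,\dots,j_\ell\}$ as column residues. I would proceed by induction on $r$, peeling off the topmost letter: deleting every letter larger than $x$ in row $x$ strips away the copied sub-ribbons of the larger factors, leaving a single letter per row; deleting the tail of each ribbon removes one cell of each complementary residue; and appending the ribbon of length $\lambda^{(x)}_1-\lambda^{(x-1)}_1+1$ reinstates the bottom-row portion of the genuine strip $\lambda^{(x)}/\lambda^{(x-1)}$. Because column residue is unaffected by the vertical placement of a row inside $A$ and is invariant under sliding a row by multiples of $n$, tracking the columns through these steps shows that the $\ell$ surviving $i$-cells occupy columns whose residues are exactly the complement of the tail residues inside $\{x-1,\dots,x+1\}$, namely the $n$-residues of $\lambda^{(i)}/\lambda^{(i-1)}$. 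Lemmas~\ref{lem:rib}, \ref{lem:tau}, and \ref{lem:wRkw} supply the control over ribbon heads, tails, and the translation $R(n-1,\cdot)$ needed to make each step precise.

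The main obstacle is exactly this residue bookkeeping. One must reconcile two genuinely different diagrams: the horizontal strip $\lambda^{(i)}/\lambda^{(i-1)}$ sitting at arbitrary heights in the core, where residues are true contents modulo $n$, against the flattened row of $ext(A)$, where only column residues are visible. The delicate points are to confirm that the complementation produced by deleting ribbon tails, combined with the appended ribbon, recovers $\{j_1,\dots,j_\ell\}$ and yields $\ell$ \emph{distinct} column residues, and to keep the recursive ``copies of ribbons for larger letters'' consistent across rows so that the induction on $r$ closes. Once that accounting is in place, the set equality holds and both directions of the stated equivalence follow.
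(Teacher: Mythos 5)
Your proposal is correct and follows essentially the same route as the paper: locate the letter $i$ via \eqref{abc2weak} as the skew $R(n-1,\lambda^{(i-1)})/\lambda^{(i)}$, use the tiling from $\phi$ (in the proof of Theorem~\ref{prop:main}) whose ribbon tails carry the residues complementary to $\{j_1,\ldots,j_\ell\}$ in $\{x+1,\ldots,x-1\}$, and observe that appending the extra ribbon and deleting all tails leaves exactly the cells of column residues $\{j_1,\ldots,j_\ell\}$. The residue bookkeeping you flag as the main obstacle is precisely what the paper's final paragraph carries out (the $n$ consecutive columns $\lambda_1^{(i)}+1,\ldots,\lambda_1^{(i)}+n$ minus the $m+1$ tails), so no new idea is missing.
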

\begin{proof}
Consider the $ABC$ given by $\mu^{(0)}\subset\cdots\subset\mu^{(r)}$
where
$$
\mu^{(r-i+1)}=(\lambda_1^{(r-1)}+n-1,\ldots,
\lambda_1^{(i)}+n-1,R(n-1,\lambda^{(i-1)}))
$$
and
$$
\mu^{(r-i)}=(\lambda_1^{(r-1)}+n-1,\ldots,
\lambda_1^{(i)}+n-1,\lambda^{(i)})\,.
$$
The parts of $\mu^{(r-i+1)}$ and $\mu^{(r-i)}$ differ only in 
the top $i$ rows where we find the cells of $A$ containing an $i$
forming the horizontal strip $D=R(n-1,\lambda^{(i-1)})/\lambda^{(i)}$.
In particular, the bottom row of $D$ in $A$ has an $i$ in columns 
$\lambda_1^{(i)}+1,\ldots,\lambda_1^{(i-1)}+n-1$.
To determine which cells of $ext(A)$ contain $i$,
a ribbon of length $\lambda^{(i)}-\lambda^{(i-1)}+1$ is appended
to the end of this row and we must appeal to the strong strips 
to delete the tails.

Theorem~\ref{dualkSchurABC} implies that $\Theta(A)=v^r\cdots v^1$ is an
affine factorization for $w_\lambda$ where
$v^i=w_{\lambda^{(i)}}w_{\lambda^{(i-1)}}^{-1}$.
For each $i$, the proof of Theorem~\ref{mainresultindual}
uniquely identifies a reduced word $j_1\cdots j_{n-1-m}$ for $v^i$
with the strong chain 
$$
\lambda^{(i)}=\nu^{(0)}\lessdot_B\cdots\lessdot_B
\nu^{(m)} =R(n-1,\lambda^{(i-1)})
\,,
$$
where $\nu^{(i-1)}$ is obtained from $\nu^{(i)}$ by deleting 
all removable copies of the ribbon whose tail has residue $a_{m-i+1}$
and lies in the bottom row, for the elements
$x+1 \leq a_m < \cdots < a_1 \leq x-1$ 
of $\{x-1,\ldots,x+1\}\backslash\{j_1,\ldots,j_{n-1-m}\}$
and $x=\lambda^{(i-1)}-1\pmod n$.  Appending a ribbon of 
length $\lambda_1^{(i)}-\lambda^{(i-1)}_1+1$ 
to the end of the bottom row of $R(n-1,\lambda^{(i-1)})/\lambda^{(i)}$
gives a skew shape where $\{j_1,\ldots,j_{n-1-m}\}$ is the set of
column residues labeling the cells in the bottom row excluding 
ribbon tails.
\end{proof}

\section{$t$-generalized affine Schubert polynomials}
\label{sec:mac}

Macdonald's basis of $P$-functions 
(see \cite{Macbook}) are defined by
\begin{equation}
\label{Pdef}
P_\lambda(x;t) =\frac{1}{v_\lambda(t) }
\sum_{w\in S_n} w( x^\lambda \prod_{i<j} \frac{x_i-tx_j}{x_i-x_j})
\end{equation}
where $v_\lambda(t)=\prod_{j\geq 0} \prod_{i=1}^{m_j}\frac{1-t^i}{1-t}$ 
for $m_j$ the multiplicity of $j$ in $\lambda$.
For convenience, we work with the deformation
$\tilde P_\lambda(x;t)=t^{-n(\lambda)}P_\lambda(x;t^{-1})$
where $n(\lambda)=\sum_i (i-1)\lambda_i$.
The set of $\tilde P$-functions forms a basis for $\Lambda$ that
generalizes the monomial basis; when $t=1$, $\tilde P_\mu(x;1)=m_\mu$.
One of the most important features of this basis is that
the Kostka-Foulkes polynomials are 
inscribed in the Schur to $\tilde P$-function transition matrix:
\begin{equation}
\label{sintP}
s_\lambda=\sum_\mu K_{\lambda\mu}(t)\,\tilde P_\mu(x;t)\,.
\end{equation}
Moreover, the $q=0$ case of the Macdonald polynomials
$\{H_\mu(x;0,t)\}$
arise as the the dual basis to $\{\tilde P_\mu\}$ with respect
to the Hall-inner product.

Lascoux and Sch\"utzenberger found an intrinsically positive
formula for the Kostka-Foulkes polynomials
by associating
a statistic (non-negative integer) called $\cocharge$ to 
each semi-standard tableaux and proving that
\begin{equation}
\label{kostkafoulkes}
K_{\lambda\mu}(t)=\sum_{T\in SSYT(\lambda,\mu)}
t^{\cocharge(T)}\,.
\end{equation}
The $\cocharge$ of a standard tableau $T$ is the sum of the 
entries in the {\it index vector} 
$I(T)=[0,I_2,\ldots,I_m]$ which is defined by setting
$I_{r}=I_{r-1}$ when the content of $r$ is larger than
the content of $r-1$ and otherwise setting
$I_{r}=I_{r-1}+1$.
%
The notion is extended to give the $\cocharge$ of a semi-standard
tableau with generic weight by successively computing the index 
of an appropriate subset of $i$ cells containing the letters $1,2,\ldots,i$.

\begin{definition}
\label{convchoice}
From a specific $x$ in cell $c$ of a tableau $T$,
the desired choice of $x+1$ is the south-easternmost
one lying above $c$.  If there are none above $c$,
the choice is the south-easternmost $x+1$ in all of $T$.
\end{definition}

Consider now any semi-standard tableau $T$  with partition weight.
Starting from the rightmost 1 in $T$, use Definition~\ref{convchoice}
to distinguish a standard sequence of $i$ cells containing $1,2,\ldots,i$.
Compute the index and then delete all cells in this sequence.
Repeat the process on the remaining cells.
The total {\it cocharge} is defined to be the sum of all the index vectors.

\begin{example}\label{ex:convcharge}
The $\cocharge$ of the following tableau is 25:
$$
\text{\footnotesize{
\tableau[sbY]
{\tf 6\cr\tf 4&5 \cr\tf 3&4
\cr2&\tf 2&3&\tf 5\cr1&1&\tf 1&2&3&\tf 7\cr}}}
\qquad
\qquad
\text{\footnotesize{
\tableau[sbY]
{\cr&\tf 5 \cr&\tf 4
\cr\tf 2&&3&\cr1&\tf 1&& 2&\tf 3&\cr}}}
\qquad
\qquad
\text{\footnotesize{
\tableau[sbY]
{\cr& \cr&
\cr&&\tf 3&\cr \tf 1&&& \tf 2&&\cr}}}
\qquad
\qquad
$$
$$
\hspace{-2.5cm}
I=[0,1,2,3,3,4,4]\qquad
\quad
I=[0,1,1,2,3]\qquad
\qquad\quad
I=[0,0,1]
$$
\end{example}

The $\ncocharge$ of an $ABC$ depends on computing an index vector
in a similar spirit.  However, the role of $n$ brings forth an additional concept.
Any ribbon in an $ABC$ that is filled with letter $i$ but does 
not lie in row $i$ is called an {\it offset}.  The number of cells 
that are not tails in all the offsets,
$$
off(A) =
\sum_{\substack{\text{R: offset in }A}}\hspace{-0.15in}(\text{size}(R) - 1)\,,
$$
is one of the two components needed when computing $\ncocharge$.
The index is the other, computed on the extension of $A$.
Our construction of the index considers only the cells 
in $ext(A)$.  For $A$ of weight $1^m$, $ext(A)$ is standard;
there is exactly one cell in each row $i$ (coming from the 
single ribbon head with an $i$ in row $i$ of $A$).  
In this case, the $\kcocharge$ is defined by computing an index
vector $I=[0,I_2,\ldots,I_m]$ defined by
$$
I_{r+1} = \begin{cases}
I_r & \text{ when $r+1$ is east of $r$}\\
I_r+1 & \text{ when $r+1$ is west of $r$}\,.
\end{cases}
$$

\begin{example}
From the $ABC$ of weight $(1^7)$:
$$
\text{\footnotesize{
A = \tableau*[scY]{2&1&1 \cr 5&3&2&2 \cr  &4& \fr[l,t,b] 3& \fr[t,b,r] 3&3 \cr  &6&5&4&4 \cr & \fr[l,t,b] 7& \fr[t,b,r] 7& \fr[l,t,b] 5& \fr[t,b,r] 5&5 \cr  & & &6& \fr[l,t,b] 6& \fr[t,b,r] 6 \cr & & & \fr[l,t,b] 7& \fr[t,b,r] 7&7}}}
\hspace{0.1in}
\;
\longmapsto
\;
\text{\footnotesize{
ext(A) = \tableau*[scY]{&&& &  1\cr &&&& & 2 \cr
&& & 3\cr &&&&&&4 \cr 
& & & & 5 \cr  & & && & 6 \cr
& & & &  7}}}
\hspace{0.1in}
\longmapsto
\hspace{0.1in}
I=[0,0,1,1,2,2,3]
$$
\end{example}
Equipped with a method to obtain the index when $ext(A)$
has a single $i$ in row $i$, we describe a method for 
extracting standard fillings
from an $ABC$ of arbitrary weight $\mu\in\mathcal P^n$.

\begin{definition}\label{wordABC}
\label{wierdchoice}
Given an $ABC$ $A$ of weight $\mu\in\mathcal P^n$, consider
its labelling by column residues.  Iteratively earmark a standard 
sequence starting with the rightmost 1. From an $x$ (of column residue $i$)
the appropriate choice of $x+1$ will be
determined by choosing its column residue from the set 
$\mathcal B$ of all column residues labelling the $x+1$'s.  
Reading counter-clockwise from $i$, this choice is the 
closest $j\in \mathcal B$ on a circle labelled clockwise
with $0,1,\ldots,n-1$.
\end{definition}

\begin{example}
$$
\text{\footnotesize{
\tableau*[scY]{ &&&&& & 1& 1&\tf 1\cr  
&&&& &  2& \tf 2&& &  2\cr
 & & &&  &&& 3&& &  3& \tf 3\cr  
& & & & & &&&  &\tf 4\cr
\bl 0&\bl 1&\bl 2&\bl 3&\bl 4&\bl 5&\bl 0&\bl 1&\bl 2&\bl 3&\bl 4&\bl 5}}}
\quad
\text{\footnotesize{
\tableau*[scY]{ &&&&& & 1&\tf 1& 1\cr  
&&&& & \tf 2&  2&& &  2\cr
 & & &&  &&& 3&& &  \tf 3& 3\cr  
& & & & & &&&  & 4\cr
\bl 0&\bl 1&\bl 2&\bl 3&\bl 4&\bl 5&\bl 0&\bl 1&\bl 2&\bl 3&\bl 4&\bl 5}}}
\quad
\text{\footnotesize{
\tableau*[scY]{ &&&&& &\tf 1& 1& 1\cr  
&&&& &  2&  2&& & \tf 2\cr
 & & &&  &&& \tf 3&& &  3& 3\cr  
& & & & & &&&  & 4\cr
\bl 0&\bl 1&\bl 2&\bl 3&\bl 4&\bl 5&\bl 0&\bl 1&\bl 2&\bl 3&\bl 4&\bl 5}}}
$$
$$
I=[0,1,1,2]
\qquad\qquad \qquad\qquad 
I=[0,1,1]
\qquad \qquad\qquad 
\qquad\qquad 
I=[0,0,1]
\qquad\qquad 
$$
\end{example}

\begin{definition}
For an $ABC$ $A$ of weight $\mu\in\mathcal P^n$, the
$n$-cocharge of $A$ is defined by
$$
\ncocharge(A)= \sum_{r}I_r(A) + off(A)\,.
$$
\end{definition}

We use the Schur expansion in $\tilde P$-functions \eqref{sintP}
as a guide to introduce a new family of symmetric
functions involving the parameter $t$ that play a 
role in affine Schubert calculus and the theory of
Macdonald polynomials.

\begin{definition}
For $\lambda\in\mathcal P^n$, set
\begin{equation}
\label{tinvdef}
\mathfrak S_{\core(\lambda)}^{(n)}(x;t) = 
\sum_{\mu\in \mathcal P^n} 
K_{\lambda\mu}^{n}(t)\, \tilde P_\mu(x;t)
\,,
\end{equation}
where the coefficients are taken to be the
$\ncocharge$ generating functions of $ABC$'s
(or weak Koskta-Foulkes polynomials),
\begin{equation}
\label{genKFPoly}
K_{\lambda\mu}^{n}(t) = \sum_{A\in ABC(\core(\lambda),\mu)}
 t^{\ncocharge(A)}\,.
\end{equation}
\end{definition}

For each $n>1$, consider the restricted linear span of Macdonald polynomials
$H_\lambda(x;0,t)$ and the $\tilde P$-functions defined by
$$
\Lambda_{(n)}^t = \mathcal L\{H_\lambda(x;0,t) : \lambda_1<n\}
\quad\text{and}\quad
\Lambda^{{(n)}^t} = \mathcal L\{\tilde P_\lambda(x;t) : \lambda_1<n\}\,.
$$
When $t=1$,
these reduce to $\Lambda_{(n)}$ and $\Lambda^{(n)}$, respectively.

\begin{proposition} \label{dualtbasis}
For $n>1$, the set $\{\mathfrak S_\lambda^{(n)}(x;t)\}_{\lambda\in\Co}$
forms a basis for $\Lambda^{{(n)}^t}$ that reduces to a set of 
representatives for the Schubert cohomology classes of $\Gr$ when $t=1$.
\end{proposition}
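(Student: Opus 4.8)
The statement splits into two claims: that $\{\mathfrak S_{\core(\lambda)}^{(n)}(x;t)\}_{\lambda\in\Co}$ is a $\mathbb Q(t)$-basis of $\Lambda^{{(n)}^t}$, and that it specializes at $t=1$ to the cohomology Schubert representatives. For the first, the plan is to read \eqref{tinvdef} as a triangular change of basis from the $\tilde P$-functions. Since $\{\tilde P_\mu(x;t)\}_{\mu\in\mathcal P}$ is a basis of $\Lambda$, its restriction $\{\tilde P_\mu(x;t):\mu\in\mathcal P^n\}$ is linearly independent over $\mathbb Q(t)$ and is, by definition of the span, a basis of $\Lambda^{{(n)}^t}$. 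Through the bijection $\mfc:\mathcal P^n\to\Co$ the functions $\mathfrak S_{\core(\lambda)}^{(n)}(x;t)$ are indexed by this same set, and \eqref{tinvdef} presents them as the image of the $\tilde P_\mu$ under the matrix $\bigl(K_{\lambda\mu}^n(t)\bigr)$; it therefore suffices to prove this matrix is invertible over $\mathbb Q(t)$.

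The engine is triangularity with respect to dominance. Since \eqref{genKFPoly} writes $K_{\lambda\mu}^n(t)$ as a nonnegative sum of $t$-powers over the set $ABC(\core(\lambda),\mu)$, it vanishes precisely when that set is empty. Corollary~\ref{cor:uni} then gives $K_{\lambda\mu}^n(t)=0$ unless $\mu\unlhd\lambda$, and supplies a \emph{unique} $ABC$ $A_0$ of weight $\lambda$, so that $K_{\lambda\lambda}^n(t)=t^{\ncocharge(A_0)}$ is a single nonzero power of $t$. Fixing a degree $d$ and listing the finitely many $\mu\in\mathcal P^n$ with $|\mu|=d$ along a linear extension of the dominance order, the block $\bigl(K_{\lambda\mu}^n(t)\bigr)_{|\lambda|=|\mu|=d}$ is triangular with nonzero diagonal, hence invertible over the field $\mathbb Q(t)$. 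As $K_{\lambda\mu}^n(t)=0$ whenever $|\mu|\neq|\lambda|$, the full matrix is block-diagonal in the degree and thus invertible, making $\{\mathfrak S_{\core(\lambda)}^{(n)}(x;t)\}_{\lambda\in\Co}$ a basis of $\Lambda^{{(n)}^t}$.

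For the specialization I set $t=1$. There $\tilde P_\mu(x;1)=m_\mu$, while $K_{\lambda\mu}^n(1)$ is the number $K_{\lambda\mu}^n$ of $ABC$'s of shape $\core(\lambda)$ and weight $\mu$, so \eqref{tinvdef} collapses to $\mathfrak S_{\core(\lambda)}^{(n)}(x;1)=\sum_{\mu}K_{\lambda\mu}^n\,m_\mu$. This is exactly the monomial expansion of the parameterless $\mathfrak S_{\core(\lambda)}^{(n)}$ recorded in \eqref{mainresultindual} and \eqref{uni}. Since $\Lambda^{{(n)}^t}$ reduces to $\Lambda^{(n)}$ at $t=1$ and these parameterless functions are Lam's representatives for the Schubert classes of $H^*(\Gr)$, the second claim follows.

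Everything here is bookkeeping on top of results already in hand, so the proof is short; the one point I would treat with care is that invertibility must hold over all of $\mathbb Q(t)$, not merely at $t=1$. This is guaranteed not by the numerical nonvanishing $K_{\lambda\lambda}^n(1)=1$ but by the sharper structural input from Corollary~\ref{cor:uni}: the diagonal entry comes from a \emph{single} $ABC$ and is therefore the unit $t^{\ncocharge(A_0)}$ of $\mathbb Q(t)$. Verifying that this uniqueness statement is precisely what the triangular argument consumes, and that the degree-grading decouples the matrix, is the only place where I would slow down.
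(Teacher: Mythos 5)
Your proposal is correct and follows essentially the same route as the paper: both reduce the basis claim to invertibility of the transition matrix with $\{\tilde P_\mu(x;t):\mu\in\mathcal P^n\}$, establish that via the dominance-triangularity and uniqueness statement of Corollary~\ref{cor:uni} (so the diagonal entry is a single power of $t$, a unit in $\mathbb Q(t)$), and obtain the $t=1$ specialization from $\tilde P_\mu(x;1)=m_\mu$ together with Theorem~\ref{dualkSchurABC}. Your added care about the degree grading making the matrix block-diagonal and about invertibility holding over all of $\mathbb Q(t)$ rather than merely at $t=1$ is a welcome elaboration of what the paper leaves implicit, not a different argument.
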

\begin{proof}
Since $\tilde P_\mu(x;1)=m_\mu$, we have that
$\{\mathfrak S_\lambda^{(n)}(x;1)\}=\{\mathfrak S_\lambda^{(n)}\}$ 
which gives a set of Schubert representatives $\{\xi^{w_\lambda}\}$ 
in  $H^*(\Gr)$ by Theorem~\ref{dualkSchurABC}.
The set $\{\tilde P_\lambda(x;t): \lambda\in\mathcal P^n\}$ is linearly
independent and therefore a basis for $\Lambda^{{(n)}^t}$. 
Therefore, $\mathfrak S_\lambda^{(n)}(x;t)\in\Lambda^{{(n)}^t}$
implies that 
it suffices to show the transition matrix between
$\tilde P$-functions and $\{\mathfrak S_\lambda^{(n)}(x;t)\}_{\lambda\in\Co}$
is invertible.  The matrix is square since there is a bijection 
between $n$-cores and elements of $\mathcal P^n$ and in fact invertible
since Corollary~\ref{cor:uni} implies
$$
K_{\eta\mu}^n(t) = 
t^{\ncocharge(B)} + 
\sum_{\substack{A\in ABC(\core(\eta),\mu) \\\ \eta\,\,\not\lhd\mu}}
t^{\ncocharge(A)}
$$
where $B$ is the unique $ABC$ of wieght $\eta$ and shape $\core(\eta)$.
\end{proof}


Let the set of functions $\{s_\gamma^{(n)}(x;t)\}_{\gamma\in\Co}$ 
be the basis for $\Lambda_{(n)}^t$ defined by the duality relation,
with respect to the Hall-inner product,
\begin{equation}
\label{eq:innerwt}
\langle \mathfrak S_\mu^{(n)}(x;t),s_\gamma^{(n)}(x;t)\rangle
=\delta_{\mu\gamma}
\,.
\end{equation}
Since the Macdonald polynomial $H_\lambda(x;q,t)$ reduces to
the Hall-Littlewood polynomial $H_\lambda(x;t)$ when $q=0$,
we are now able to prove there is a natural tie between affine
Schubert calculus and Macdonald polynomials.

\begin{corollary}
For each $n$-core $\lambda$, $s_\lambda^{(n)}(x;1)$ represents the
Schubert class $\xi_{w_\lambda}$ in $H_*(\Gr)$ and
for every $\mu\in\mathcal P^n$, the Macdonald polynomial
at $q=0$ satisfies the non-negative expansion
\begin{displaymath}
H_\mu(x;0,t)=
\sum_{\lambda} K_{\lambda\mu}^{n}(t)\, s_{\core(\lambda)}^{(n)}(x;t)
\,.
\end{displaymath}
\end{corollary}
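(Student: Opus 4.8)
The plan is to reduce both assertions to the uniqueness of dual bases, since all of the substantive combinatorics has already been packaged into $K_{\lambda\mu}^n(t)$, into Proposition~\ref{dualtbasis}, and into the defining relation \eqref{eq:innerwt}. I would lean on two duality facts: the relation $\langle H_\mu(x;0,t),\tilde P_\nu(x;t)\rangle=\delta_{\mu\nu}$ recorded after \eqref{sintP}, and the definition \eqref{eq:innerwt} of $s_\gamma^{(n)}(x;t)$ as the Hall-dual basis to $\{\mathfrak S_\mu^{(n)}(x;t)\}$. Restricting indices to $\mathcal P^n$, the first says that $\{\tilde P_\nu(x;t)\}$ and $\{H_\mu(x;0,t)\}$ are dual bases of $\Lambda^{{(n)}^t}$ and $\Lambda_{(n)}^t$; in particular the Hall pairing restricts to a nondegenerate pairing of these two subspaces (of equal rank in each degree), which is the engine for everything below.

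For the first assertion I would specialize the defining relation at $t=1$. Since $\tilde P_\mu(x;1)=m_\mu$, Proposition~\ref{dualtbasis} gives $\mathfrak S_\mu^{(n)}(x;1)=\mathfrak S_\mu^{(n)}$, the cohomology Schubert representatives produced by Theorem~\ref{dualkSchurABC}. Setting $t=1$ in \eqref{eq:innerwt} then yields $\langle \mathfrak S_\mu^{(n)},s_\gamma^{(n)}(x;1)\rangle=\delta_{\mu\gamma}$, so $s_\gamma^{(n)}(x;1)$ satisfies exactly the defining relation \eqref{def:kschur} of the homology $k$-Schur function $s_\gamma^{(n)}$. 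By uniqueness of the dual basis, $s_\gamma^{(n)}(x;1)=s_\gamma^{(n)}$, which by Lam~\cite{Lam} represents $\xi_{w_\gamma}$ in $H_*(\Gr)$.

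For the Macdonald expansion I would verify the identity by pairing its proposed right-hand side against the basis $\{\mathfrak S_{\core(\eta)}^{(n)}(x;t)\}_{\eta\in\mathcal P^n}$ of $\Lambda^{{(n)}^t}$. Expanding and using \eqref{eq:innerwt} gives
\[
\Big\langle \mathfrak S_{\core(\eta)}^{(n)}(x;t),\ \sum_{\lambda}K_{\lambda\mu}^n(t)\,s_{\core(\lambda)}^{(n)}(x;t)\Big\rangle=K_{\eta\mu}^n(t),
\]
while expanding $\mathfrak S_{\core(\eta)}^{(n)}(x;t)$ via \eqref{tinvdef} and using $\langle \tilde P_\nu(x;t),H_\mu(x;0,t)\rangle=\delta_{\nu\mu}$ gives $\langle \mathfrak S_{\core(\eta)}^{(n)}(x;t),H_\mu(x;0,t)\rangle=K_{\eta\mu}^n(t)$ as well. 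Thus the difference of the two sides of the claimed identity pairs to zero with every $\mathfrak S_{\core(\eta)}^{(n)}(x;t)$; since these span $\Lambda^{{(n)}^t}$ (Proposition~\ref{dualtbasis}) and the restricted Hall pairing is nondegenerate, the difference vanishes. Non-negativity is then automatic, as each $K_{\lambda\mu}^n(t)=\sum_{A}t^{\ncocharge(A)}$ lies in $\mathbb N[t]$ by \eqref{genKFPoly}.

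The only point requiring care—what I would treat as the genuine obstacle—is the nondegeneracy of the Hall pairing after restriction to $\Lambda^{{(n)}^t}$ and $\Lambda_{(n)}^t$. One must check that $\{H_\mu(x;0,t):\mu_1<n\}$ is not merely orthonormal to $\{\tilde P_\nu:\nu_1<n\}$ inside all of $\Lambda$, but actually forms a dual pair of bases for these two subspaces; equivalently, that the pairing matrix in these bases is the identity in each graded degree. This follows from the global duality of $\{H_\mu(x;0,t)\}$ and $\{\tilde P_\nu\}$ together with the bijection between $n$-cores and $\mathcal P^n$, which matches the ranks of the two subspaces, so no input beyond Proposition~\ref{dualtbasis} and the standard $q=0$ Macdonald duality is needed.
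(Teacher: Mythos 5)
Your proposal is correct and follows essentially the same route as the paper: the $t=1$ specialization of \eqref{eq:innerwt} reducing to \eqref{def:kschur} via Proposition~\ref{dualtbasis} for the first claim, and the standard dual-basis transpose of \eqref{tinvdef} (using $\langle H_\mu(x;0,t),\tilde P_\nu(x;t)\rangle=\delta_{\mu\nu}$) for the Macdonald expansion. The paper's proof is a single sentence that leaves the second half implicit; you have simply written out the duality bookkeeping, including the nondegeneracy of the restricted pairing, which the paper takes for granted.
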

\begin{proof}
The result follows by noting that
\eqref{eq:innerwt} reduces to \eqref{def:kschur} when $t=1$
by Proposition~\ref{dualtbasis} and that \eqref{def:kschur} defines
the $k$-Schur functions representing the Schubert class 
$\xi_{w_\lambda}$.
\end{proof}

Combinatorial results on $ABC$'s can be used to prove that
when $n>|\lambda|$, 
both $s_\lambda^{(n)}(x;t)$ and $\mathfrak S_\lambda^{(n)}(x;t)$ 
reduce to the Schur function  $s_\lambda$.

\begin{lemma}
Given a partition $\lambda$ where $|\lambda|<n$,
if $T$ is a semi-standard tableau of shape $\lambda$ then
Definition~\ref{wierdchoice} reduces 
to Definition~\ref{convchoice}.
\end{lemma}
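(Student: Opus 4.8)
The plan is to verify that, at every step of the cocharge extraction, the cell holding $x+1$ selected by Definition~\ref{wierdchoice} coincides with the one selected by Definition~\ref{convchoice}. Since both rules are applied iteratively (extract one standard sequence, delete it, and repeat on the remainder, which is again semi-standard), I would argue by induction on the successive passes and, within a pass, by induction on $x$. The first simplification comes from the hypothesis: $|\lambda|<n$ forces $\lambda_1\le|\lambda|<n$, so the column-residue map $c\mapsto c-1\pmod n$ is injective on the columns of $T$ with image contained in $\{0,\dots,\lambda_1-1\}$, leaving the entire arc $\{\lambda_1,\dots,n-1\}$ unoccupied. Consequently, reading counter-clockwise from $i=c_x-1$ meets the occupied residues in the order ``those $\le i$ in decreasing order, then (across the empty arc) those $>i$ in decreasing order.'' Thus Definition~\ref{wierdchoice} selects the available $x+1$ of largest column that is $\le c_x$ when one exists, and otherwise the available $x+1$ of largest column overall.

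The core of the argument is to translate Definition~\ref{convchoice} into this same column statement, using only column-strictness of $T$ together with the fact that the cells of a fixed entry form a horizontal strip (so that within such a strip a larger column means a lower row). The invariant I would carry through the induction is that \emph{each selected cell is the rightmost occurrence of its entry in its own row}: this holds at the start because all $1$'s lie in the bottom row, and it is preserved because the chosen $x+1$ is, in either branch, the rightmost $x+1$ in its row. Granting the invariant, column-strictness shows that no $x+1$ can lie strictly above and strictly to the right of the current cell $c$ (such an $x+1$ would force an $x$ to the right of $c$ in its row, contradicting that $c$ is the rightmost $x$ there), and dually that no $x+1$ in a column $\le c_x$ can lie weakly below $c$. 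Hence the set of $x+1$'s lying strictly above $c$ is \emph{exactly} the set of $x+1$'s in columns $\le c_x$, and within each branch the south-easternmost such cell is the one of largest column --- precisely the cell chosen by the circle in Definition~\ref{wierdchoice}.

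Matching the two branches then gives that the selections agree at every step, so Definition~\ref{wierdchoice} reduces to Definition~\ref{convchoice}; here one also records that for $|\lambda|<n$ the object is a genuine semi-standard tableau by Proposition~\ref{prop:abckinf}, so the circle rule is in fact being applied to the very tableau $T$. The hard part will be the geometric translation of the previous paragraph, and in particular excluding the ``above and to the right'' configuration: this is exactly the arrangement that would let the residue circle and the south-easternmost rule diverge, and it is ruled out only by combining the rightmost-in-row invariant with the no-wraparound feature supplied by $|\lambda|<n$. I expect the remaining bookkeeping --- that deleting an extracted standard sequence preserves semi-standardness and the residue gap --- to be routine.
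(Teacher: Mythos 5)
Your overall strategy is the same as the paper's: use the residue class left unoccupied because $|\lambda|<n$ to cut the circle of Definition~\ref{wierdchoice} into a line, show that the $x+1$'s above $c$ and the $x+1$'s not above $c$ occupy the two arcs on either side of the gap, and identify the counter-clockwise-closest cell with the south-easternmost cell of Definition~\ref{convchoice}. Your explicit ``rightmost occurrence of its entry in its own row'' invariant is a genuine improvement in care: the paper's proof silently needs exactly this to rule out an $x+1$ strictly above and strictly to the right of $c$, and never says so.

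There is, however, a concrete gap: you attach the wrong residues to $T$. The residues that Definition~\ref{wierdchoice} sees are the column residues of the cells of $ext(A)$, and by Lemma~\ref{lem:abcres} these equal the $n$-residues (contents mod $n$) of the corresponding cells of $T$ --- not the column residues $c-1\pmod n$ of $T$ itself, which is what you use. The paper's proof accordingly runs the circle rule on contents (``Consider $x$ of $n$-residue $i$ in $T$''). The difference is not cosmetic. The decisive case for matching Definition~\ref{convchoice} is an $x+1$ sitting directly above $c$, which must be chosen first. With contents, that cell has residue $i-1$, the immediate counter-clockwise neighbour of $i$, so it is unambiguously closest. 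With your column residues it has residue $i$ itself, so whether the circle selects it first or last depends on whether the starting residue counts as ``closest'' --- a convention Definition~\ref{wierdchoice} does not supply and which your phrase ``largest column that is $\le c_x$'' (rather than $<c_x$) silently assumes. Concretely, for $n=5$, $\lambda=(2,2)$ and $T$ with bottom row $1\,1$ and top row $2\,2$: from the $1$ in column $2$, Definition~\ref{convchoice} and the content-residue circle both pick the $2$ in column $2$, while the column-residue circle read strictly counter-clockwise from $i=1$ meets residue $0$ first and picks the $2$ in column $1$. So as written you prove a statement about a different labelling, and its truth there rests on an unjustified tie-break. The repair is to replace columns by contents throughout: every $x+1$ above $c$ then has content strictly smaller than that of $c$, every other $x+1$ strictly larger, the contents of $T$ span at most $\ell(\lambda)+\lambda_1-1\le|\lambda|<n$ consecutive values so no two are congruent mod $n$, and your arc argument (together with your invariant) goes through with no ties.
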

\begin{proof}
Consider $x$ of $n$-residue $i$ in $T$.  Note that $|\lambda|<n$ 
implies that there is a unique cell $c$ of residue $i$ that contains 
$x$.  Let $j$ be the first entry on the circle
reading counter-clockwise from $i$ that is a residue
of a cell containing $x+1$.
If there is an $x+1$ above $c$, then
the south-easternmost cell containing
an $x+1$ that is above $c$ has residue $j$
since there are no $x+1$'s of a residue counter-clockwise
between $i$ and $j$.  If there are none above $c$,
then for the same reason, the south-easternmost cell
containing an $x+1$ has residue $j$.
\end{proof}

\begin{proposition}
For $\lambda\in\Co$ with $deg(\lambda)<n$, 
$\mathfrak S_\lambda^{(n)}(x;t)=s_\lambda^{(n)}(x;t)=s_\lambda$.
\end{proposition}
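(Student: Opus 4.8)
The plan is to establish the two equalities in turn: first $\mathfrak S_\lambda^{(n)}(x;t)=s_\lambda$, and then $s_\lambda^{(n)}(x;t)=s_\lambda$ by duality. I begin with the observation that $deg(\lambda)<n$ forces $|\lambda|=deg(\lambda)$: two addable corners of the same residue have contents differing by a multiple of $n$, hence by at least $n$, which cannot occur while the content range stays below $n$, so every strong cover from $\emptyset$ to $\lambda$ adds a single cell. In particular $|\lambda|<n$, and the small-core results above all apply.

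For the first equality I compare the $\tilde P$-expansion \eqref{tinvdef} with \eqref{sintP}, reducing the claim to $K_{\lambda\mu}^{n}(t)=K_{\lambda\mu}(t)$ for every $\mu$. Proposition~\ref{prop:abckinf} already furnishes a bijection $A\mapsto T$ from $ABC(\lambda,\mu)$ to $SSYT(\lambda,\mu)$, so it suffices to show $\ncocharge(A)=\cocharge(T)$. Writing $\ncocharge(A)=\sum_r I_r(A)+off(A)$, I treat the two terms separately. The term $off(A)$ vanishes: when the content range is below $n$ the $n$-core condition forbids a strong cover from adding a ribbon of size $\ge 2$ (such a ribbon would create a hook of length $n$), so every ribbon in the defining chains is a single cell; every offset is a copy of one of these, hence also a single cell, and $off(A)=\sum_R(\mathrm{size}(R)-1)=0$. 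For the index, the preceding lemma shows that for $|\lambda|<n$ the circle rule of Definition~\ref{wierdchoice} selects precisely the south-easternmost successor prescribed by Definition~\ref{convchoice}, so the standard subsequences extracted from $ext(A)$ correspond to those extracted from $T$; and since by Lemma~\ref{lem:abcres} the column residues labelling the $i$-cells of $ext(A)$ agree as a set with the $n$-residues of the $i$-cells of $T$, the east/west rule on $ext(A)$ reproduces the content rule defining $\cocharge(T)$. This yields $\sum_r I_r(A)=\cocharge(T)$ and therefore $\mathfrak S_\lambda^{(n)}(x;t)=s_\lambda$.

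For the second equality I use the defining duality \eqref{eq:innerwt} together with the self-duality of the Schur basis. First, $s_\lambda\in\Lambda_{(n)}^t$, because $|\lambda|<n$ makes the Hall--Littlewood expansion of $s_\lambda$ involve only $H_\nu(x;0,t)$ with $|\nu|=|\lambda|<n$, so each such $\nu$ has $\nu_1<n$. Next I verify $\langle\mathfrak S_\mu^{(n)}(x;t),s_\lambda\rangle=\delta_{\mu\lambda}$ for all $\mu\in\Co$: if $deg(\mu)<n$ this is $\langle s_\mu,s_\lambda\rangle=\delta_{\mu\lambda}$ by the first equality, while if $deg(\mu)\ge n$ then $\mathfrak S_\mu^{(n)}(x;t)=\sum_\nu K_{\mu\nu}^{n}(t)\tilde P_\nu$ is homogeneous of degree $deg(\mu)\ge n>|\lambda|$ and is therefore orthogonal to $s_\lambda$ under the graded Hall pairing, with $\delta_{\mu\lambda}=0$ as well. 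Uniqueness of the dual basis then gives $s_\lambda^{(n)}(x;t)=s_\lambda$, consistent with \eqref{def:kschur} at $t=1$.

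I expect the statistic-preservation in the first equality to be the main obstacle, and within it the reconciliation of the two index conventions. The preceding lemma settles the choice of successor cell, but one must still confirm that the east/west increments on $ext(A)$ equal the content increments on $T$; this is exactly where $|\lambda|<n$ is essential, since only then do the $n$-residues determine the contents and make the column order in $ext(A)$ track the diagonal order of $T$. By comparison, $off(A)=0$ is routine once the single-cell ribbon claim is secured.
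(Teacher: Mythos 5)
Your overall architecture for the first equality matches the paper's (reduce to $K^n_{\lambda\mu}(t)=K_{\lambda\mu}(t)$ via the bijection of Proposition~\ref{prop:abckinf}, then match statistics), and your duality argument for $s_\lambda^{(n)}(x;t)=s_\lambda$ is a clean complement to what the paper leaves implicit. But the statistic-matching step, which you correctly identify as the crux, contains a genuine error and a genuine omission. The error is in $off(A)=0$: you claim that $deg(\lambda)<n$ forces every ribbon in the defining chains to be a single cell. That is false. The chains defining an $ABC$ run from $\lambda^{(i)}$ up to the translation $R(n-1,\lambda^{(i-1)})$, whose degree is $deg(\lambda^{(i-1)})+n-1$; these cores are not small, and their strong covers routinely add ribbons of size at least two even when $deg(\lambda)<n$. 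For instance, with $n=4$ and $\lambda=(2,1)$ (so $deg(\lambda)=3<4$), the unique $ABC$ of shape $(2,1)$ and weight $(2,1)$ uses the chain $(2,1)\lessdot_B(4,1)\lessdot_B(5,2)=R(3,(2))$, and $(4,1)/(2,1)$ is a size-two ribbon. What is true --- and what the paper actually proves --- is that the \emph{offsets}, i.e., the copies of the bottom-row ribbons that land in higher rows, have size one: a non-tail cell of an offset of size at least two would sit directly above a cell of the horizontal strip in $\lambda^{(i)}/\lambda^{(i-1)}$ carrying the same residues, contradicting that $R(n-1,\lambda^{(i-1)})/\lambda^{(i-1)}$ is a horizontal strip. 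Your argument says nothing about these copies.

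The omission is the index comparison. You assert that because the column residues of the cells containing $i$ in $ext(A)$ agree as a set with the residues of the $i$-cells of $T$, the east/west rule on $ext(A)$ reproduces the content rule defining $\cocharge(T)$ --- and then concede in your closing paragraph that this still has to be confirmed. It does, and it is the bulk of the paper's proof: one must locate the letter-$(i-1)$ cells of $ext(A)$ in columns $[\lambda_1^{(i)}+1,\lambda_1^{(i-1)}+n]$ and the letter-$i$ cells in columns $[\lambda_1^{(i)}+1,\lambda_1^{(i)}+n]$, which requires first ruling out non-tail cells of ribbons containing $i-1$ in columns $[\lambda_1^{(i-1)}+1,\lambda_1^{(i)}]$ (again using $deg(\lambda)<n$), so that both letters sit in a common window of $n$ consecutive columns and ``east of'' becomes equivalent to ``larger residue'' under the cyclic order \eqref{ord}. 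One also needs the preliminary identification of the content rule for $\cocharge$ with the residue rule under that cyclic order. As written, the proposal names the right targets but establishes neither half of $\ncocharge(A)=\cocharge(T)$.
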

\begin{proof}
Let $\lambda\in\Co$ with $deg(\lambda)<n$. 
By Lemma~\ref{lem:abcseq}, $A\in ABC(\lambda,\mu)$ is 
defined by a sequence of $n$-cores
\begin{equation}
\label{lambdaseq1}
\emptyset\subset\lambda^{(1)}\subset\cdots\subset\lambda^{(r)}=\lambda
\end{equation}
where $(\lambda^{(i-1)},\lambda^{(i)})$ is a horizontal strong 
$(n-1-\mu_i)$-strip.   
This sequence is in one to one correspondence with
a unique element $T \in SSYT(\lambda,\mu)$ by
Proposition~\ref{prop:abckinf}. 
Lemmas~\ref{lem:cychor} and \ref{lem:abcres}
imply that the set of residues 
labelling cells of $\lambda^{(i)}/\lambda^{(i-1)}$
is the same as the set of column residues of cells containing 
the letter $i$ in $ext(A)$.
Since Definition~\ref{wierdchoice} depends only 
on residues, it remains to show that the index
vector on a standard sequence matches. 

Note that the cocharge index of a standard sequence of $T$ 
can be defined by ordering the residues of $T$ with respect to
\begin{equation}
\label{ord}
x+1<x+2<\cdots <0<1<\cdots <x-1<x \,,\quad
\text{where $x=\lambda_1^{(i)}-1 \pmod n$,}
\end{equation}
and setting $I_i=I_{i-1}$ when the residue of the cell 
containing $i$ is larger than the residue of $i-1$ and $I_i=I_{i-1}+1$ 
otherwise.  Recall also that the index for an $ABC$ is computed
by geographically comparing a cell containing $i-1$ in $ext(A)$ 
to a cell containing $i$; if $i$ is east of $i-1$ then
$I_i = I_{i-1}$ and $I_i=I_{i-1}+1$ otherwise.
We claim that the colum residue of 
the cell containing $i$ is larger than that containing $i-1$
only when $i$ is east of $i-1$.

First we claim that if $\lambda_1^{(i-1)} < \lambda_1^{(i)}$, then there is no ribbon $S$ containing $i-1$ that has
a non-tail cell in columns $[\lambda_1^{(i-1)}+1, \lambda_1^{(i)}]$ 
of $A$. 
By way of contradiction suppose such a ribbon $S$ containing $i-1$ with a non-tail cell $c$ of column residue $r$ exists in $A$.  By Lemma
\ref{lem:abcres}, $\lambda^{(i-1)}$ has a cell of residue $r$.  Since $c$
is in columns $[\lambda_1^{(i-1)}+1, \lambda_1^{(i)}]$, where $\lambda_1^{(i-1)} < \lambda_1^{(i)}$, then $\lambda^{(i)}/\lambda^{(i-1)}$ has
a cell in the bottom row of residue $r$.  Thus $T$ has two distinct cells, not on the same diagonal and of residue $r$.  This contradicts the assumption that $deg(\lambda) < n$.

Observe that the $(i-1)^{st}$ row of $A$ has its rightmost cell in column $\lambda_1^{(i-2)} + n - 1$.  Since $ext(A)$ is constructed by first appending $\lambda_1^{(i-1)} + \lambda_1^{(i-2)} + 1$ cells to the $(i-1)^{st}$ row of $A$, then the cells containing $i-1$ in $ext(A)$ are within columns
$$
[\lambda_1^{(i-1)}+1,(\lambda_1^{(i-2)}+n-1) + (\lambda_1^{(i-1)} - \lambda_1^{(i-2)} + 1)] = [\lambda_1^{(i-1)}+1,\lambda_1^{(i-1)}+n].
$$  
Similarly, the cells containing $i$ in $ext(A)$ are within columns $[\lambda_1^{(i)}+1,\lambda_1^{(i)}+n]$. Since there are no ribbons containing $i-1$ that have a non-tail cell in columns $[\lambda_1^{(i-1)}+1,\lambda_1^{(i)}]$ of $A$, then the cells containing $i-1$ in $ext(A)$ are actually within columns $[\lambda_1^{(i)}+1,\lambda_1^{(i-1)}+n]$.  Since $[\lambda_1^{(i)}+1,\lambda_1^{(i-1)}+n] \subseteq [\lambda_1^{(i)}+1,\lambda_1^{(i)}+n]$, then when $i$ is east of $i-1$ in $ext(A)$, its column residue is larger with respect the same ordering \eqref{ord}.

It remains to prove that $off(A) = 0$.  
By contradiction, if $off(A) > 0$ then $A$ has a ribbon $O$ of 
length greater than 1 and filled with the letter $i$ that is not 
in the $i^{th}$ row, for some $1 \leq i \leq \ell(\mu)$.  
On the one hand, note that $O$ has cells which are at the top of their columns 
in $R(n-1,\lambda^{(i-1)})$, and not in the bottom row.  There is a 
horizontal ribbon in the bottom row of $R(n-1,\lambda^{(i-1)})$ 
whose cells are at the top of their columns and of the same residue 
as those of $O$.  By Lemmas~\ref{lem:abcres}, the set of 
residues labeling cells of $\lambda^{(i)}/\lambda^{(i-1)}$ 
is the same as the set of column residues of cells containing 
letter $i$ in $ext(A)$.  Thus, in $R(n-1,\lambda^{(i-1)})$, 
the residues of non-tail cells of $O$ are the same as the residues 
labeling a horizontal strip $S$ in $\lambda^{(i)}/\lambda^{(i-1)}$.

On the other hand, if $\lambda \in \Co$ with $deg(\lambda) < n$, then no two cells that lie at the top of their columns in $\lambda^{(i)} \subset \lambda$ can have the same $n$-residue.  Thus the residues labeling cells of $\lambda^{(i)}/\lambda^{(i-1)}$ are unique.  Since $O$ is not in the bottom row of $R(n-1,\lambda^{(i-1)})$, and it is of length greater than 1, then it must be the case that there is a non-tail cell of it which is above a cell of $S$.  This contradicts the fact that $R(n-1,\lambda^{(i-1)})/\lambda^{(i-1)}$ is a horizontal strip; constructed by adding a cell to the top of every column of $\lambda^{(i-1)}$ and $n-1$ cells to its bottom row.
\end{proof}

\section{Quantum cohomology of flags}

\label{sec:gw}

Here we show that the strong formulation of the Pieri rule 
for $H_*(\Gr)$ given in Corollary~\ref{thm:Pieristrong}
can be applied to the problem of computing intersections in the 
(small) quantum cohomology of a flag variety.  The examination 
leads to a distinguished family of strong chains defined by a
notion of translation that generalizes $R(n-1,\lambda)$.

In this section, we switch to representing the indices of Schubert 
basis elements by partitions $\lambda \in \mathcal P^n$.
Recall that $R(n-1,\lambda)$ was defined to be $(\lambda_1+n-1,\lambda)$ when
$\lambda$ is an $n$-core in \S~\ref{sec:pieri}.  Here, we abuse notation 
and instead define $R(n-1,\eta)=(\core(\eta)_1+n-1,\core(\eta))$
for $\eta\in\mathcal P^n$. 

\subsection{An affine Monk formula}

The quantum cohomology ring  $QH^*(X)$ is defined for any K\"ahler algebraic 
manifold $X$, but we consider only the complete flag manifold 
$X=Fl_n$ of chains of vector spaces in $\mathbb C^n$.
The quantum cohomolgy ring is simply 
$QH^*(Fl_n)= H^*(Fl_n)\otimes\mathbb Z[q_1,\ldots,q_{n-1}]$
for parameters $q_1,\ldots, q_{n-1}$ as a linear space but
the multiplicative structure is much richer than the specialization 
of $q_1=\cdots=q_{n-1}=0$.
Cells in the Schubert decomposition of $QH^*(Fl_n)$ are indexed 
by permutations $w\in S_n$, and the quantum product is
defined by
\begin{equation}
\label{qschubin}
\sigma_u\,*\,\sigma_v = \sum_w \sum_{d} 
q_1^{d_1} \dots q_{n-1}^{d_{n-1}}\,
 \langle u,v,w\rangle_d
\, \sigma_{w_0w}
\,,
\end{equation}
where the structure constants are 3-point Gromov-Witten invariants 
of genus 0 which count equivalence classes of certain rational curves 
in $Fl_n$.  The understanding and computation of Gromov-Witten invariants
is a widely studied problem.  

Although the construction is not manifestly positive, 
all 3-point, genus zero Gromov-Witten invariants 
$\langle u,w,v\rangle_{d}$ in \eqref{qschubin}
can be computationally obtained from the subset
\begin{equation}
\label{simplegw}
\left\{
\langle s_r,w,v \rangle_d
: 1\leq r<n\;\text{and}\;w,v\in S_n
\right\}
\end{equation}
since the quantum cohomology 
is generated by the codimension one Schubert classes.
By defining a family of {\it quantum Schubert polynomials},
Fomin, Gelfand, and Postnikov were able to prove that
there is a simple combinatorial characterization for 
this set that generalizes the classical Monk formula \cite{Monk}.

\begin{theorem}\cite{FGP}
(Quantum Monk formula)
For $w\in S_n$ and $1\leq r<n$, the
quantum product of
the Schubert classes $\sigma_{s_r}$ and $\sigma_w$ is
given by
\begin{equation}
\label{quantummonk}
\sigma_{s_r}*\sigma_w = 
\sum
\sigma_{w\tau_{a,b}}
+\sum q_{c}q_{c+1}\cdots q_{d-1} \sigma_{w\tau_{c,d}}
\end{equation}
where the first sum is over all transpositions 
$\tau_{a,b}$ such that $a\leq r<b$ and $\ell(w\tau_{a,b})=\ell(w)+1$,
and the second sum is over all transpositions 
$\tau_{c,d}$ such that $c\leq r<d$ and 
$\ell(w\tau_{c,d}) = \ell(w)-\ell(\tau_{c,d})
=\ell(w)-2(d-c)+1$.
\end{theorem}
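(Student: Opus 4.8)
The plan is to prove the quantum Monk formula by realizing $QH^*(Fl_n)$ as an explicit quotient of a polynomial ring, in which Schubert classes are represented by concrete polynomials, and thereby convert the statement into a polynomial straightening computation. First I would use the known presentation $QH^*(Fl_n)\cong \mathbb Z[q_1,\ldots,q_{n-1}][x_1,\ldots,x_n]/I_q$, where $I_q$ is generated by the quantum elementary symmetric polynomials $E_1,\ldots,E_n$ (the $q$-deformations of $e_1,\ldots,e_n$). In this presentation the divisor class $\sigma_{s_r}$ is represented by $x_1+\cdots+x_r$, while a general class $\sigma_w$ is represented by its quantum Schubert polynomial $\mathfrak S_w^q$, obtained from the ordinary Schubert polynomial by expanding it in the standard elementary monomials $e_{i_1}(x_1)\,e_{i_2}(x_1,x_2)\cdots$ and substituting each $e$ by its quantum analogue $E$, following \cite{FGP}. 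The key structural input is that $\sigma_w\mapsto \mathfrak S_w^q$ is a ring isomorphism onto $QH^*(Fl_n)$, so that the quantum product is computed by ordinary multiplication of quantum Schubert polynomials followed by reduction modulo $I_q$.

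With this dictionary in place, the theorem reduces to computing $(x_1+\cdots+x_r)\,\mathfrak S_w^q$ and rewriting the result in the quantum Schubert basis modulo $I_q$. Setting $q=0$ collapses everything to ordinary cohomology, where multiplication by $x_1+\cdots+x_r$ together with the classical straightening reproduces Monk's formula \cite{Monk}; this accounts for the first sum in \eqref{quantummonk}, over transpositions $\tau_{a,b}$ with $a\le r<b$ and $\ell(w\tau_{a,b})=\ell(w)+1$. Hence the genuine content of the theorem lies entirely in the $q$-corrections.

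The quantum corrections are produced precisely when the straightening invokes a relation from $I_q$: replacing a top-degree quantum elementary symmetric polynomial by its lower-order terms trades a degree-$n$ monomial in the $x$'s for a term carrying a factor $q_cq_{c+1}\cdots q_{d-1}$. I would track these substitutions to show that each surviving correction is indexed by a transposition $\tau_{c,d}$ with $c\le r<d$ and carries exactly the monomial $q_cq_{c+1}\cdots q_{d-1}$. The cohomological grading, in which each $q_i$ has degree $4$, then forces $\deg(q_cq_{c+1}\cdots q_{d-1})=4(d-c)$ and hence exactly the stated length condition $\ell(w\tau_{c,d})=\ell(w)-2(d-c)+1$; contributions that would require more than one application of a quantum relation carry higher powers of $q$ and must be shown to be absent by this same degree bookkeeping.

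The main obstacle is controlling the straightening: one must show that reduction modulo $I_q$ produces no spurious quantum terms and that each genuine term appears with coefficient exactly $1$ and the asserted $q$-monomial. The cleanest way to organize this is to establish, as a lemma, that the operator of quantum multiplication by $x_1+\cdots+x_r$ on the span of quantum Schubert polynomials is given directly by the right-hand side of \eqref{quantummonk}; the commutation relations among the $x_i$ and the recursive structure of the $E_i$ then reduce the verification to a finite collection of base cases. An alternative route, more in the spirit of the present paper, would be to pass through Peterson's identification of $QH^*(Fl_n)$ with a localization of $H_*(\Gr)$ and read the formula off the affine homology Pieri rule of Corollary~\ref{thm:Pieristrong}, where the divisor class corresponds to a distinguished Schubert homology class and the quantum parameters are recovered from the shapes of the resulting cores.
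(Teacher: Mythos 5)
The paper does not prove this statement: it is quoted verbatim from Fomin--Gelfand--Postnikov \cite{FGP} and used as an external input, so there is no internal proof to compare your attempt against. Judged on its own terms, your proposal is a faithful roadmap of the strategy in \cite{FGP} --- present $QH^*(Fl_n)$ as $\mathbb Z[q][x_1,\ldots,x_n]/I_q$ with $I_q$ generated by the quantum elementary symmetric polynomials, represent $\sigma_w$ by the quantum Schubert polynomial obtained by quantizing the standard elementary monomial expansion, and reduce the theorem to straightening $(x_1+\cdots+x_r)\,\mathfrak S_w^q$ modulo $I_q$. The classical ($q=0$) part is indeed just Monk's formula, as you say.

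The gap is that the entire content of the theorem is concentrated in the step you defer to a ``lemma'': that reduction modulo $I_q$ produces exactly one quantum correction for each $\tau_{c,d}$ with $c\le r<d$ and $\ell(w\tau_{c,d})=\ell(w)-2(d-c)+1$, with coefficient $1$ and $q$-monomial $q_c\cdots q_{d-1}$, and nothing else. Your degree bookkeeping only shows that a term carrying $q_c\cdots q_{d-1}$ must be indexed by a permutation whose length drops by $2(d-c)-1$; it does not identify that permutation as $w\tau_{c,d}$, does not exclude other permutations of the same length, and does not rule out cancellation or multiplicity. In \cite{FGP} this is precisely where the work lies (via the orthogonality of quantum Schubert polynomials and a careful analysis of the quantization map), and ``a finite collection of base cases'' does not substitute for it. Your suggested alternative route through Peterson's theorem and the affine homology Pieri rule is closer in spirit to the present paper, but note that the corresponding statement here (Conjecture~\ref{con:amf}, the affine Monk formula) is only conjectural in this paper, so that path is not currently a proof either.
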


We have found that the idea of horizontal strong strips extends to 
include combinatorics of the flag Gromov-Witten invariants and the quantum 
Monk rule.  
Peterson asserted that $QH^*(G/P)$ of a flag variety is, up to localization, 
a quotient of 
the homology $H_*(\Gr_G)$ of the affine Grassmannian 
$\Gr_G$ of $G$ (proven in \cite{LStoda}). As a consequence, 
the Gromov-Witten invariants arise as homology Schubert structure 
constants of $H_*(\Gr_G)$.  
The identification of $\langle u,v,w\rangle_d$ 
with the coefficients in
\begin{equation}
\xi_\mu\,\xi_\lambda = \sum_\nu c_{\mu,\lambda}^\nu\,\xi_\nu
\end{equation}
was made explicit in \cite{LM:flag}.  The identification hinges on a
correspondence between permutations in $S_n$ and certain partitions
defined by
\begin{equation}
\sh:
w\mapsto \lambda \quad\text{for}\quad
\lambda_i'=\binom{n-i}{2}+inv_i(w_0w)\,,
\end{equation}
where $\lambda'$ is the partition obtained by
reflecting the shape of $\lambda$ about the line $y=x$,
the inversion $inv_i(u)$ is the number of
$u_j<u_i$ for $i<j$, and $w_0=[n,n-1,\ldots,1]$ is the
permutation of maximal length in $S_n$.

\begin{theorem}\cite{LM:flag}
\label{the:gw2klr}
For $u,v,w\in S_n$ and $d\in\mathbb N^{n-1}$,
\begin{equation}
\label{gw2klr}
\langle u,w,v\rangle_d = 
c_{\sh(u),\sh(w)}^{\eta}
\,,
\end{equation}
where $\eta$ is obtained by
adding $\binom{n+1-i}{2}-(n-i+1)d_i+(n-i)d_{i-1}$
cells to column $i$ of $\sh(v)$, for $1\leq i<n$.
\end{theorem}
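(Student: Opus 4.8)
The plan is to deduce the identity from Peterson's comparison isomorphism between the quantum cohomology of $Fl_n$ and the homology of $\Gr$, whose existence is asserted just before the theorem and proven in \cite{LStoda}. After inverting the Schubert classes attached to translation elements, there is an algebra isomorphism $\Psi$ from a localization of $H_*(\Gr)$ onto a localization of $QH^*(Fl_n)$ that carries each affine homology Schubert class to a quantum Schubert class scaled by a monomial in the parameters $q_1,\ldots,q_{n-1}$. The whole theorem follows once this correspondence of basis elements is made completely explicit and the two product expansions are compared coefficient by coefficient.

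First I would set up the dictionary on indices. Writing $\tilde S_n \cong S_n \ltimes Q^\vee$ with $Q^\vee$ the coroot lattice, every affine Grassmannian element---equivalently, every $n$-core---factors uniquely as a finite permutation times a translation $t_\beta$. The map $\sh$ is designed to extract the finite data: the column-length recipe $\lambda_i' = \binom{n-i}{2} + inv_i(w_0 w)$ records the inversion statistics of $w_0 w$, while the degree vector $d \in \mathbb N^{n-1}$ corresponds to the translation $\beta = \sum_i d_i \alpha_i^\vee$, identified with a curve class in $H_2(Fl_n)$. Under this dictionary I would verify that the affine homology class $\xi_\eta$ of the $n$-core attached to the pair $(v,\beta)$ satisfies $\Psi(\xi_\eta) = q^{d}\,\sigma_v$ up to the localization. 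The content of the column-adding recipe---add $\binom{n+1-i}{2} - (n-i+1)d_i + (n-i)d_{i-1}$ cells to column $i$ of $\sh(v)$---is then just this $n$-core written through its column heights: the constant staircase term $\binom{n+1-i}{2}$ realizes the classical ($d=0$) embedding of the finite flag cohomology into the affine homology, and the $d$-linear terms record the shift induced by the coroot translation $t_\beta$.

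With the dictionary in place, I would apply $\Psi$ to the homology structure expansion $\xi_{\sh(u)}\,\xi_{\sh(w)} = \sum_\nu c^\nu_{\sh(u),\sh(w)}\,\xi_\nu$ and match it against the quantum product \eqref{qschubin}. Since $\Psi$ is a ring homomorphism, the left-hand side maps, up to a common localization factor, to $\sigma_u * \sigma_w$; extracting the coefficient of $q^d \sigma_v$ on the quantum side and of $\xi_\eta$ on the homology side then forces $\langle u,w,v\rangle_d = c^\eta_{\sh(u),\sh(w)}$. The localization factors on the two sides are inverse to one another and cancel in a coefficient-by-coefficient comparison in the same basis.

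The main obstacle is the bookkeeping around the localization and the precise matching of the $q$-monomial with the translation part of $\eta$. One must check that inverting the translation Schubert classes neither creates nor destroys any of the terms being compared---that is, every pair $(v,d)$ contributing to the quantum product corresponds to a genuine $n$-core $\eta$ of the asserted shape, and conversely each $\xi_\eta$ appearing in the homology product decodes to an admissible pair $(v,d)$. Establishing the column-height formula as the exact image of $t_\beta$ under $\mfa$, together with the sign pattern $-(n-i+1)d_i + (n-i)d_{i-1}$ coming from how a coroot translation shifts successive columns, is a finite but delicate computation in $\tilde S_n$; this is where the real work lies, the remainder being a transport of structure along $\Psi$.
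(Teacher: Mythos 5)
The paper gives no proof of this theorem: it is imported verbatim from \cite{LM:flag}, and the surrounding text already tells you the route --- Peterson's assertion that $QH^*(G/P)$ is a localized quotient of $H_*(\Gr_G)$, proven in \cite{LStoda}. Your proposal follows exactly that route (transport the structure constants through the Peterson--Lam--Shimozono isomorphism, decompose $\tilde S_n\cong S_n\ltimes Q^\vee$, and decode the translation part of the $n$-core as the $d$-dependent column shifts), so it is the same approach as the source the paper cites; the only caveat is that what you defer as ``a finite but delicate computation'' --- verifying that the column recipe $\binom{n+1-i}{2}-(n-i+1)d_i+(n-i)d_{i-1}$ is the exact image of the pair $(v,t_\beta)$ under $\mfa$, including the $w_0$-twist relating the coefficient of $q^d\sigma_{w_0v}$ to the class indexed by $\sh(v)$ --- is precisely the content of \cite{LM:flag}, so your write-up is a correct plan rather than a complete argument.
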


The image of $S_n$ under the map $\sh$ is the set of $n!$ partitions 
$\mathcal P^n_\square = \{ \lambda: \square/\lambda =\text{vertical strip}\}$,
where the partition $\square=(n-1,n-2^2,\ldots,1^{n-1})$.  
This foreshadows that \textit{$(n-1)$-rectangles}, 
the shapes $R_r=(r^{n-r})$ with $n-r$ rows of length $r$, 
play a role in the combinatorics of quantum cohomology
of flag varieties as they have in various contexts of affine 
Schubert calculus (e.g. \cite{LLM,[LMfil],Mag,LStoda,BBTZ}).
At the root, for any partition $\lambda \in \mathcal P^n$ and $1\leq r<n$,
\begin{equation}
\label{krec}
\xi_{\lambda\cup R_r}=\xi_{R_r}\xi_{\lambda}
\,.
\end{equation}
Since a defining subset of Gromov-Witten invariants
is given by \eqref{simplegw}, looking closely at $\sh(s_r)$ 
reveals the role of these shapes in the combinatorics of 
quantum cohomology.  To be precise, it was shown in \cite{LM:flag} that
for any $v,w\in S_n$ and $1\leq r<n$, 
\begin{equation}
\label{thm:simpleklr}
\langle s_r,v,w\rangle_d= c_{R_r',\sh(w)}^{\eta\cup\sh(v)\cup R_r}\,,
\end{equation}
where the $i$th column of $\eta$ is
$(n-i)d_{i-1}-(n+1-i)d_i$ and
$R'_r$ is the shape obtained by deleting the corner box from $R_r$.
In particular, Monk's classical formula is determined by
$c_{R_r',\sh(w)}^{\sh(v)\cup R_r}$.
Therefore, for any $u,v,w\in S_n$, all Gromov-Witten invariants 
$\langle u,v,w\rangle_d$ can be computed from
the set
\begin{equation}
\label{simplelr}
\left\{c_{R_r',\lambda}^{\nu} :
1\leq r<n,\, \lambda\in\mathcal P^n_\square,\,\text{and}\, \nu_1<n
\right\}\,.
\end{equation}

The $\eta\cup \sh(v)\cup R_r$ in \eqref{thm:simpleklr} 
suggests that the formula for these invariants is 
related to elements covered by the generic translation of
$\lambda$ defined by 
$$R(r,\lambda)=\core(\lambda\cup R_r)\,.
$$

%

\begin{conjecture}
\label{con:amf}
(Affine Monk formula)
For $1\leq r<n$ and partition  $\lambda$ with $\lambda_1<n$,
\begin{equation}
\label{affmonk}
\xi_{R_r'} \,\xi_{\lambda} = 
\sum_{\core(\nu)\lessdot_B \,R(r,\lambda)}
\xi_{\nu} \,,
\end{equation}
where $\core(\nu)_i < R(r,\lambda)_i$ for
some $i$ such that $(\lambda\cup R_r)_i=r$.
\end{conjecture}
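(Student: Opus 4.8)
\emph{Proof strategy.} Two complementary routes present themselves, and the plan is to run them in tandem. The first is conceptual and explains where the formula comes from: by the Peterson-type identification \eqref{thm:simpleklr} (a specialization of Theorem~\ref{the:gw2klr}), setting \(\lambda=\sh(w)\) gives \(c_{R_r',\sh(w)}^{\eta\cup\sh(v)\cup R_r}=\langle s_r,v,w\rangle_d\), so each structure constant \(c_{R_r',\lambda}^{\nu}\) with \(\lambda\in\mathcal P^n_\square\) is one of the simple Gromov--Witten invariants governed by the quantum Monk formula \eqref{quantummonk}. Since \eqref{quantummonk} is a multiplicity-free sum over transpositions \(\tau_{a,b}\) with \(a\le r<b\) (classical, \(d=0\)) and \(q\)-weighted \(\tau_{c,d}\) with \(c\le r<d\) (quantum, \(d\ne 0\)), this immediately yields that \(\xi_{R_r'}\xi_\lambda\) is multiplicity-free and identifies its support for \(\lambda\) in the finite set \(\mathcal P^n_\square\). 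The first task is then to read this transposition data through \(\sh\) and the rectangular translation \(R(r,\lambda)=\core(\lambda\cup R_r)\): I would show that the classical transpositions produce exactly the covers \(\core(\nu)\lessdot_B R(r,\lambda)\) whose removed ribbon meets a row \(i\) with \((\lambda\cup R_r)_i=r\) (the condition \(a\le r<b\) being precisely what pins the cover to the rectangular \(R_r\)-block), while the quantum transpositions account for the remaining covers, with the exponent vector \(d\) read off from which columns of \(R(r,\lambda)\) the removed ribbon occupies---this is the sense in which ``the \(q\)-parameters are readily extracted from the shape \(\nu\)''.

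To reach all of \(\mathcal P^n\) (the identification above only covers the finite subset \(\mathcal P^n_\square\)), the plan is a second, self-contained route by induction on \(|\lambda|\), using that \(\Lambda_{(n)}\) is generated by the Pieri classes \(h_m=\xi_{c_{0,m}}\). Writing \(\Phi(\lambda)=\sum \xi_\nu\) for the conjectured right-hand side, it suffices to prove the base case together with the combinatorial Pieri-compatibility
\[
h_1\,\Phi(\lambda^-)=\sum_{\lambda\,\gtrdot\,\lambda^-}\Phi(\lambda),
\]
since then the weak Pieri rule \eqref{weakpieri}, associativity, and the identity \(h_1(\xi_{R_r'}\xi_{\lambda^-})=\xi_{R_r'}(h_1\xi_{\lambda^-})\) force \(\xi_{R_r'}\xi_\lambda=\Phi(\lambda)\) after disentangling the weak-cover relations by a standard triangularity argument. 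The natural vehicle for proving this identity is the ribbon strong strips of Definition~\ref{rstrongstrip}: each strong cover into \(R(r,\lambda)\) is refined by a ribbon tiling, and I would establish an analogue of Theorem~\ref{prop:main} identifying ribbon strong strips into \(R(r,\lambda)\) with cyclically decreasing factorization data, exactly as horizontal strong strips were matched with cyclically decreasing words there. The base case \(\lambda=\emptyset\) reduces to checking that \(R_r\) (which is an \(n\)-core, its largest hook being \(n-1\)) has the single strong cover \(R_r'\lessdot_B R_r\) meeting the column condition, and the rectangle relation \eqref{krec} should fall out as the ``corner-restored'' limit.

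The hard part will be the compatibility of the rectangular translation \(R(r,\cdot)=\core(\cdot\cup R_r)\) with both the strong-cover operation and the Pieri action; equivalently, matching the length-decreasing quantum transpositions \(\tau_{c,d}\) of \eqref{quantummonk} with boundary strong covers of \(R(r,\lambda)\). The difficulty is that \(\lambda\mapsto\core(\lambda\cup R_r)\) is not simply the addition of \(R_r\): the core operation redistributes cells around the boundary, so a weak cover \(\lambda^-\lessdot\lambda\) does not translate to a local move on \(R(r,\lambda)\), and a length drop of \(2(d-c)-1\) on the permutation side becomes a single \(\lessdot_B\)-step whose ribbon wraps the rim of \(R(r,\lambda)\). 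I expect the key lemma to be a bijection, for fixed \(R(r,\lambda)\), between strong covers \(\core(\nu)\lessdot_B R(r,\lambda)\) satisfying the column condition and the transposition pairs \((\tau,d)\) of \eqref{quantummonk}, under which the occupied columns of the removed ribbon recover \(d\); proving that this map is well defined and injective---so that distinct transpositions never yield the same \(\nu\) with the same \(d\)---is the crux, after which summing \eqref{quantummonk} through \eqref{thm:simpleklr} gives \eqref{affmonk} on \(\mathcal P^n_\square\) and the induction of the second route propagates it to all of \(\mathcal P^n\). As an additional check, when \(r=n-1\) the multiplier \(R_r'=(n-2)\) is the single row \(c_{0,n-2}\), and the formula should collapse to Corollary~\ref{thm:Pieristrong}.
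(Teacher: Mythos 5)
The first thing to say is that the paper does not prove this statement: it is stated as Conjecture~\ref{con:amf} and left open. The authors only remark that the \emph{expansion} of $\xi_{R_r'}\xi_\lambda$ can be derived from the non-commutative $k$-Schur results of [BSS]; the actual content of the conjecture is the clean characterization of the support as strong covers of $R(r,\lambda)=\core(\lambda\cup R_r)$ meeting the row condition, and that dictionary is exactly what is not established anywhere in the paper. So there is no paper proof to compare yours against, and your submission should be judged as an attempted proof of an open statement.

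Judged that way, what you have is a plausible plan with the decisive step missing, and both of your routes bottleneck at the same missing step. In Route~1, the quantum Monk formula \eqref{quantummonk} together with \eqref{thm:simpleklr} and \eqref{krec} does give you, for $\lambda\in\mathcal P^n_\square$, a multiplicity-free expansion of $\xi_{R_r'}\xi_\lambda$ indexed by transpositions $\tau_{a,b}$ and $q$-weighted $\tau_{c,d}$; but translating ``$a\le r<b$ with $\ell(w\tau_{a,b})=\ell(w)+1$'' and ``$\ell(w\tau_{c,d})=\ell(w)-2(d-c)+1$'' into ``$\core(\nu)\lessdot_B R(r,\lambda)$ with $\core(\nu)_i<R(r,\lambda)_i$ for some $i$ with $(\lambda\cup R_r)_i=r$'' \emph{is} the conjecture. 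You name this bijection as ``the key lemma'' and ``the crux'' but do not construct it, and the difficulty you correctly identify (that $\core(\cdot\cup R_r)$ redistributes cells around the rim, so a length drop of $2(d-c)-1$ must become a single $\lessdot_B$-step whose ribbon wraps the boundary) is precisely where a proof would have to live. Route~2 does not circumvent this: the inductive identity $h_1\,\Phi(\lambda^-)=\sum_{\lambda\gtrdot\lambda^-}\Phi(\lambda)$ requires understanding how the set of admissible strong covers of $R(r,\lambda)$ varies as $\lambda$ moves up by a weak cover, i.e.\ how $\core(\cdot\cup R_r)$ interacts with adding a box --- the same unaddressed combinatorics --- and the ``standard triangularity argument'' for disentangling the weak covers is asserted rather than set up (you would need an order in which the system $\{h_1(\xi_{R_r'}\xi_{\lambda^-})\}$ determines each $\xi_{R_r'}\xi_\lambda$ individually). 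Your consistency checks (the base case, the collapse to Corollary~\ref{thm:Pieristrong} at $r=n-1$, where indeed $\xi_{R_{n-1}'}=\xi_{c_{0,n-2}}$ and the admissible covers are the horizontal strong $1$-strips) are correct and worth having, but they do not close the gap.
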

Note that the expansion \eqref{affmonk} can be derived from results in
\cite{BSS} that determine the expansion of a \textit{non-commutative $k$-Schur function} 
indexed by $R_r'$ in terms of words in the affine nilCoxeter algebra $\mathbb{A}$.

\begin{example}
\label{ex:affmonk}
For $n=5$, $\lambda=(3,2,1,1)$, and $R_3'=(3,2)$, 
term $\xi_\nu$ occurs in the expansion of $\xi_{R_3'}\xi_\lambda$ 
when $\nu$ is a partition where $\mfc(\nu) \lessdot_B R(3,\lambda)$ and 
$\mfc(\nu)_i < R(3,\lambda)_i$ for some $i \in \{1,2,3\}$.
\bigskip

\begin{tabular}{c|c|c|c}
\hspace{-0.3in}
${\text{\Large\tableau*[pbY]{\cr \cr & \cr & & \cr & & & & \cr & & & & &}}} 
\lessdot_B
{\text{\Large\tableau*[pbY]
{ &  \bl &  \bl &  \bl \cr \cr & \cr & & & \tf \cr & & & & \cr & & & & & & \tf }}}$
&
${\text{\Large\tableau*[pbY]{\cr  \cr & \cr & & \cr & & \cr & & & & & & }}} 
\lessdot_B
{\text{\Large\tableau*[pbY]
{ &  \bl &  \bl &  \bl \cr \cr & \cr & & & \fr[w,l,t,r] \cr & & & \fr[w,l,b] & \fr[w,t,b,r] \cr & & & & & & }}}$
&
${\text{\Large\tableau*[pbY]{\cr \cr \cr & & & \cr & & & \cr & & & & & &}}} 
\lessdot_B
{\text{\Large\tableau*[pbY]
{ &  \bl &  \bl &  \bl \cr \cr & \tf \cr & & & \cr & & & & \tf \cr & & & & & & }}}$
&
$\xcancel{{\text{\Large\tableau*[pbY]{ \cr & \cr & & & \cr & & & & \cr & & & & & &}}} 
\lessdot_B {\text{\Large\tableau*[pbY]
{ \tf &  \bl &  \bl & 
 \bl \cr 
\cr & \cr & & &\cr & & & & \cr & & & & & &}}} }$	\\
$\nu=(3,3,2,2,1,1)$\quad & $\nu=(4,2,2,2,1,1)$\quad  & $\nu=(3,3,3,1,1,1)$ \quad &
\end{tabular}

\medskip

\noindent
Relation~\eqref{krec} then gives the terms in the expansion of 
$\xi_{R_3'}\xi_{\lambda\cup R_r}$ for any $R_r$.
In particular, 
\label{ribbonstripsconjex}
\begin{equation}
\label{kschurprod}
\xi_{R_3'}\,\xi_{\lambda\cup R_2} = 
\xi_{(3,3,2,2,1,1)\cup R_2}
+ \xi_{(4,2,2,2,1,1)\cup R_2}
+ \xi_{(3,3,3,1,1,1)\cup R_2}\,.
\end{equation}
Since $\sh([4,2,5,3,1])=\lambda\cup R_2\in \mathcal P_\square^n$,
this matches the quantum Monk expansion
by Equation~\eqref{thm:simpleklr}:
$$
\sigma_{s_3}\,*\,\sigma_{[4,2,5,3,1]} = 
\sigma_{[4,3,5,2,1]} +
q_3 \,\sigma_{[4,2,3,5,1]} +
q_3q_4\,\sigma_{[4,2,1,3,5]}
\,.
$$
%
%
%
%
\end{example}

\subsection{Ribbon strong strips}

When $r=n-1$, Conjecture~\ref{con:amf} reduces to a special case 
of the expansion given in Corollary~\ref{thm:Pieristrong}.  The
terms are defined by horizontal strong 1-strips, which are in fact
the elements $\nu$ covered by $R(n-1,\lambda)$ where 
$\core(\nu)_1<R(n-1,\lambda)_1$.
Horizontal strong strips of generic length $1\leq b<n-1$
describe the expansion of $\xi_{(n-1-b)}\xi_\lambda$.
We thus turn to the more general expansion,
for $1\leq b<r<n$ and partition  $\lambda \in \mathcal P^n$,
\begin{equation}
\label{rectconj}
\xi_{(r^{n-1-r},r-b)} \,\xi_\lambda = \sum_{\nu\in \mathcal B_{r,b,\lambda}}
\xi_\nu
\,,
\end{equation}
as a guide to characterize a larger family of
strong strips associated to the general translation $R(r,\lambda)$.

While horizontal strong strips are certain shapes differing 
from $R(n-1,\lambda)$ by a horizontal strip, the more general picture 
involves shapes that differ from  $R(r,\lambda)$
by a {\it horizontal ribbon strip}, a
sequences of shapes
$\nu= \nu^{(0)}\subset\nu^{(1)}\subset\cdots\subset\nu^{(m)}$
such that $\nu^{(i)}/\nu^{(i-1)}$ is comprised of ribbons
whose heads lie above a cell in $\nu$ (or in the bottom row),
for all $1\leq i\leq m$.  Rather than requiring that bottom
row lengths are increasing as we did for horizontal strong 
strip, we now require that a ribbon tail lies in a specified set 
of columns.  
For $1\leq r<n$ and a partition $\lambda$ with $\lambda_1<n$,
let $\eta=\lambda\cup R_r$.  
Let $m$ be the highest row of $\eta$ that has length $r$
and denote the set of $r$ columns containing the
last $r$ cells in row $m$ of $\core(\eta)$ by
$col_r(\lambda)$.


\begin{definition}
\label{rstrongstrip}
Given $n$-cores $\lambda$ and $\nu$ and $1\leq r<n$,
the pair $(\lambda,\nu)$ is a ribbon strong strip 
with respect to $r$ if there is a horizontal ribbon
strip
$$
\nu=\nu^{(0)}\lessdot_B\nu^{(1)}\lessdot_B\cdots\lessdot_B\nu^{(m)}=
R(r,\lambda)
$$
with a ribbon tail of $\nu^{(i)}/\nu^{(i-1)}$ 
lying in $col_r(\lambda)$ for all $i>0$.
Its length is defined to be $m$.
\end{definition}
The expansion \eqref{rectconj} can be derived from results in
\cite{BSSgen} that determine the expansion of a \textit{non-commutative $k$-Schur function} 
indexed by $(r^{n-1-r},b)$ in terms of words in the affine nilCoxeter algebra $\mathbb{A}$.
We instead conjecture that the expansion is simply the sum over $\nu$ 
such that $(\core(\lambda),\core(\nu))$ 
is a ribbon strong strip of length $b$ with respect to $r$.

\begin{example}
\label{ex:rect}
For $n=5$ and $\lambda = (4,2)$, the ribbon strong strips 
of length 2 with respect to $r=3$ are
\vspace{-0.03in}
$$
\begin{tabular}{c|c}
{\Large\tableau*[pbY]{ \cr \cr & & & & \cr & & & & & & & &}} $\lessdot_B$ 
{\Large\tableau*[pbY]{ \bl {\color{red}{\downarrow}} & \bl {\color{red}{\downarrow}} & \bl {\color{red}{\downarrow}} \cr \bl \cr & \fr[w,l,t,r] \cr & \fr[w,l,b,r] \cr & & & & \cr & & & & & & & &}} $\lessdot_B$ 
{\Large\tableau*[pbY]{ \bl {\color{red}{\downarrow}} & \bl {\color{red}{\downarrow}} & \bl {\color{red}{\downarrow}} \cr \bl \cr & \cr & & \tf \cr & & & & \cr & & & & & & & &}}
& 
{\Large\tableau*[pbY]{ \cr & \cr & & & \cr & & & & & & &}} $\lessdot_B$ 
{\Large\tableau*[pbY]{ \bl {\color{red}{\downarrow}} & \bl {\color{red}{\downarrow}} & \bl {\color{red}{\downarrow}} \cr \bl \cr & \tf \cr & \cr & & & & \tf \cr & & & & & & & &\tf }} $\lessdot_B$ 
{\Large\tableau*[pbY]{ \bl {\color{red}{\downarrow}} & \bl {\color{red}{\downarrow}} & \bl {\color{red}{\downarrow}} \cr \bl \cr & \cr & & \tf \cr & & & & \cr & & & & & & & &}} \\ \hline \\[-0.08in] 
{\Large\tableau*[pbY]{ & & \cr & & \cr & & & & & &}} $\lessdot_B$ 
{\Large\tableau*[pbY]{ \bl {\color{red}{\downarrow}} & \bl {\color{red}{\downarrow}} & \bl {\color{red}{\downarrow}} \cr \bl \cr \tf \cr & & \cr & & & \tf \cr & & & & & & & \tf}} $\lessdot_B$ 
{\Large\tableau*[pbY]{ \bl {\color{red}{\downarrow}} & \bl {\color{red}{\downarrow}} & \bl {\color{red}{\downarrow}} \cr \bl \cr & \tf \cr & & \cr & & & & \tf \cr & & & & & & & & \tf}}
&
{\Large\tableau*[pbY]{ \cr & \cr & & & \cr & & & & & & &}} $\lessdot_B$ 
{\Large\tableau*[pbY]{ \bl {\color{red}{\downarrow}} & \bl {\color{red}{\downarrow}} & \bl {\color{red}{\downarrow}} \cr \bl \cr \cr & & \tf \cr & & & \cr & & & & & & &}} $\lessdot_B$ 
{\Large\tableau*[pbY]{ \bl {\color{red}{\downarrow}} & \bl {\color{red}{\downarrow}} & \bl {\color{red}{\downarrow}} \cr \bl \cr & \tf \cr & & \cr & & & & \tf \cr & & & & & & & & \tf}}
\end{tabular}
$$
Conjecturally, this gives the expansion
$$
\xi_{{\text{\large\tableau*[pbY]{ \cr & &}}}} \xi_{{\text{\large\tableau*[pbY]{ & \cr & & &}}}} = \xi_{{\text{\large\tableau*[pbY]{ \cr \cr & & & \cr & & & }}}} + \xi_{{\text{\large\tableau*[pbY]{ & & \cr & & \cr & & & }}}} + \xi_{{\text{\large\tableau*[pbY]{ \cr & \cr & & \cr & & & }}}}.
$$
\end{example}

\begin{conjecture}
Given $n$-cores $\lambda$ and $\nu$ and $1\leq r<n$,
the pair $(\lambda,\nu)$ is a ribbon strong strip 
with respect to $r$ if and only if
there exists a strong strip 
\begin{equation}
\label{srchain}
\nu=\nu^{(0)}\lessdot_B\nu^{(1)}
\lessdot_B \cdots \lessdot_B \nu^{(m)}= R(r,\lambda)
\end{equation}
with a ribbon tail of $\nu^{(i)}/\nu^{(i-1)}$ 
lying in $col_r(\lambda)$ for all $i>0$.
\end{conjecture}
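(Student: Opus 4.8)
The plan is to establish this equivalence by mirroring the proof of Proposition~\ref{prop:ss2rss}, which is precisely the case $r=n-1$: there $col_{n-1}(\lambda)$ is the block of $n-1$ rightmost columns of the bottom row of $R(n-1,\lambda)$, and the requirement that a ribbon tail lie in this block is exactly the content bound $c_1\geq\lambda_1$. The forward implication is immediate from the definitions. If $(\lambda,\nu)$ is a ribbon strong strip with respect to $r$, then by Definition~\ref{rstrongstrip} its defining chain is already a saturated chain of Bruhat covers carrying a ribbon tail in $col_r(\lambda)$ at every step, hence it is in particular a strong strip of the required form. Thus the whole content lies in the reverse implication, where one must show that imposing only the tail condition on a strong strip to $R(r,\lambda)$ forces the horizontality condition, namely that the head of every ribbon in every skew $\nu^{(i)}/\nu^{(i-1)}$ lies in the bottom row or directly above a cell of $\nu=\nu^{(0)}$.

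For the reverse direction I would first analyze the core geometry of the translate $R(r,\lambda)=\core(\lambda\cup R_r)$. The crucial structural lemma to prove is the analog of the observation driving Proposition~\ref{prop:ss2rss}: letting $m$ be the highest row of $\eta=\lambda\cup R_r$ of length $r$, the $r$ cells occupying $col_r(\lambda)$ in row $m$ of $R(r,\lambda)$ lie at the top of their columns, and more generally the columns in $col_r(\lambda)$ are capped in $R(r,\lambda)$ in a way that renders extremal any cell of $R(r,\lambda)$ lying in those columns weakly above row $m$. Establishing this requires tracking how coring interacts with the inserted rectangle $R_r$; the relation $\xi_{\lambda\cup R_r}=\xi_{R_r}\xi_{\lambda}$ from \eqref{krec} together with the length and residue bookkeeping of Lemma~\ref{lem:wRkw} (applied along the rows of length $r$) should control the degree and the residue pattern down this column block.

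With this geometry in hand, I would proceed step by step along the chain. By Lemma~\ref{lem:rib} and Lemma~\ref{lem:tau}, each cover $\nu^{(i-1)}\lessdot_B\nu^{(i)}$ adds all addable copies of one fixed ribbon, whose tails share a residue and whose heads share a residue. The tail condition pins one copy's tail into the prescribed column block $col_r(\lambda)$; combined with the containment $\nu^{(i)}\subset R(r,\lambda)$ and the capping property above, this should force the head of the lowest copy to be supported, i.e.\ in the bottom row or resting on $\nu$. Property~\ref{thecoreprop}, applied to the equal-residue extremal cells at the heads of the several copies, then propagates the "cell below" condition upward, so that every copy in the skew acquires a supported head; this is exactly the horizontal ribbon strip condition. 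As in the proof of Theorem~\ref{prop:main}, I would simultaneously record that the tiling of $R(r,\lambda)/\nu$ into the prescribed ribbons is unique, so that the two descriptions match bijectively.

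The step I expect to be the main obstacle is the geometric analysis of $\core(\lambda\cup R_r)$ around $col_r(\lambda)$. Unlike the case $r=n-1$, where $R(n-1,\lambda)=(\lambda_1+n-1,\lambda)$ arises by a single row-prepend and the only cells of content exceeding $\lambda_1-1$ sit in the bottom row, the translate $R(r,\lambda)$ is a genuine core of the padded shape $\lambda\cup R_r$, so $col_r(\lambda)$ need not lie in a single row and the ribbons involved may have height greater than one. Pinning down exactly which cells of $R(r,\lambda)$ are extremal along this block, and proving that a tail landing there forces the full ribbon and all of its copies to be horizontally supported, is where the difficulty concentrates; the higher-row stacking of ribbons, which is absent in the horizontal-strip setting of Proposition~\ref{prop:ss2rss}, is precisely what makes the upward propagation via Property~\ref{thecoreprop} delicate.
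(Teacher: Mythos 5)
First, a point of calibration: the paper does not prove this statement at all --- it is stated as a conjecture and left open, so there is no proof of record to compare yours against. Your proposal therefore has to stand on its own, and as written it is a roadmap rather than a proof; the central step is deferred at exactly the point where you yourself flag the main obstacle.

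Two concrete gaps. First, the forward implication is not ``immediate from the definitions'' under the paper's own terminology: a \emph{strong strip} is a saturated chain \emph{together with} an increasing content vector of marked heads. In the $r=n-1$ case of Proposition~\ref{prop:ss2rss} this vector comes for free because every marked head sits in the bottom row and the bottom rows strictly lengthen; for general $r$ the heads of a horizontal ribbon strip may sit in many different rows (resting on cells of $\nu$), and you give no argument that their contents can be ordered increasingly along the chain. If instead you read ``strong strip'' as merely a saturated Bruhat chain, then the forward direction is indeed trivial, but you should say which reading you are using, since it changes what the reverse direction must deliver. Second, and more seriously, the reverse direction rests entirely on the unproved ``capping'' lemma about the geometry of $R(r,\lambda)=\core(\lambda\cup R_r)$ along $col_r(\lambda)$, and on an upward propagation of supported heads via Property~\ref{thecoreprop}. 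Property~\ref{thecoreprop} propagates the ``has a cell above it'' condition from a north-west extremal cell to a south-east one of the same residue; what you need is that \emph{every} copy of the ribbon of Lemma~\ref{lem:tau} has its head supported on $\nu^{(0)}$, and the copies added at step $i$ sit on $\nu^{(i-1)}$, not on $\nu$, so the horizontality condition of Definition~\ref{rstrongstrip} (heads above cells of $\nu=\nu^{(0)}$ or in the bottom row) does not follow from step-by-step support without an additional inductive argument. Even the length-one case, which the paper does prove, already requires Proposition 9 of \cite{[LMrec]} to relocate ribbon copies across row $m$; the general case needs a genuine induction on top of that, and your proposal does not supply it. Until the geometric lemma about which cells of $\core(\lambda\cup R_r)$ are extremal in the columns of $col_r(\lambda)$ is actually proved, the argument does not close.
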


\begin{proposition}
Given $n$-cores $\lambda$ and $\nu$,
$(\lambda,\nu)$ is a horizontal strong strip
if and only if $(\lambda,\nu)$ is a ribbon strong strip 
with respect to $n-1$.
\end{proposition}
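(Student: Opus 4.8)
The plan is to show that for $r=n-1$ Definition~\ref{rstrongstrip} collapses onto Definition~\ref{hstrongstrip}, so the two notions describe the same strong chains from $\nu$ to $R(n-1,\lambda)$. The first step is a bookkeeping computation. Since $R_{n-1}=((n-1)^1)$ is a single row, unwinding the definition of $col_{n-1}(\lambda)$ through the bijection $\mfc$ shows that for an $n$-core $\lambda$ one has $\core(\mfc^{-1}(\lambda)\cup R_{n-1})=(\lambda_1+n-1,\lambda)=R(n-1,\lambda)$ and $col_{n-1}(\lambda)=\{\lambda_1+1,\dots,\lambda_1+n-1\}$; that is, $col_{n-1}(\lambda)$ is exactly the set of columns occupied by the $n-1$ rightmost cells of the bottom row of $R(n-1,\lambda)$. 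I would then record the elementary fact already used in the proof of Proposition~\ref{prop:ss2rss}: because row two of $R(n-1,\lambda)$ has length $\lambda_1$, every column $c>\lambda_1$ of $R(n-1,\lambda)$ meets it in a cell \emph{only} in the bottom row. Consequently a ribbon whose tail lies in $col_{n-1}(\lambda)$ has its northwesternmost cell in the bottom row, and since a ribbon proceeds weakly southeast from its tail, the whole ribbon lies in the bottom row. This gives the dictionary making the two conditions comparable: ``a ribbon tail of $\nu^{(i)}/\nu^{(i-1)}$ lies in $col_{n-1}(\lambda)$'' is equivalent to ``$\nu^{(i)}/\nu^{(i-1)}$ contains a bottom-row ribbon,'' which is equivalent to ``the bottom row of $\nu^{(i)}$ is strictly longer than that of $\nu^{(i-1)}$.''

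For the forward implication, suppose $(\lambda,\nu)$ is a horizontal strong $m$-strip. Lemma~\ref{lem:hstrip} furnishes the defining chain together with height-one ribbons $S^1,\dots,S^m$ lying in the bottom row of $R(n-1,\lambda)/\lambda$, with $\nu^{(i)}/\nu^{(i-1)}$ equal to the union of all removable copies of $S^i$. The lowest copy of each $S^i$ sits in the bottom row, so its tail lies in $col_{n-1}(\lambda)$, giving the tail condition. Every ribbon in the step has height one; the bottom-row copy has its head in the bottom row, and each higher copy populates the part of the horizontal strip $R(n-1,\lambda)/\lambda$ lying above the bottom row --- hence directly over a cell of $\lambda\subseteq\nu$ --- so its head lies above a cell of $\nu$. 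Thus the chain is a horizontal ribbon strip and $(\lambda,\nu)$ is a ribbon strong strip with respect to $n-1$.

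For the reverse implication I would avoid re-deriving $\lambda\subset\nu$ by hand and instead route through Theorem~\ref{prop:main}. Given a ribbon strong strip with respect to $n-1$, each step's distinguished ribbon lies entirely in the bottom row by the dictionary above, and these bottom-row ribbons are adjacent and extend the bottom row rightward. Applying the map $\psi$ and repeating the algebraic computation from the forward half of Theorem~\ref{prop:main} --- which uses only Lemma~\ref{lem:rib}, the adjacency of these bottom-row ribbons, and Lemma~\ref{lem:wRkw}, and so does not presuppose $\lambda\subset\nu$ --- expresses $w_\nu w_\lambda^{-1}=\tau_{a_m,a_{m-1}}\cdots\tau_{a_1,x}(s_{x-1}\cdots s_{x+1})=s_{j_1}\cdots s_{j_{n-1-m}}$ as a reduced cyclically decreasing word. (The inequality on $a_m$ that replaces the appeal to $\lambda\subset\nu$ in Theorem~\ref{prop:main} comes directly from the tail condition: the first added bottom-row ribbon starts in column $\nu_1+1\in col_{n-1}(\lambda)$, forcing $\nu_1\ge\lambda_1$ and hence $a_m\ge\lambda_1$ in the total order $x+1<\cdots<x-1$.) Since the chain has $m$ strong covers, $\ell(w_\nu)=\ell(w_{R(n-1,\lambda)})-m=\ell(w_\lambda)+n-1-m$ by Lemma~\ref{lem:wRkw}, which matches the reducedness of the $\psi$-word and so supplies the length-additivity hypothesis $\ell(w_\nu)=\ell(w_\lambda)+\ell(w_\nu w_\lambda^{-1})$. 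Theorem~\ref{prop:main} then yields that $(\lambda,\nu)$ is a horizontal strong strip, in particular recovering $\lambda\subset\nu$ for free.

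The main obstacle is the opening reconciliation: $col_{n-1}(\lambda)$ and $R(n-1,\lambda)$ are defined in the quantum section through the $k$-bounded-to-core map applied to $\mfc^{-1}(\lambda)\cup R_r$, whereas Definition~\ref{hstrongstrip} works intrinsically with the core $\lambda$. Checking that the two recipes agree when $r=n-1$, so that $col_{n-1}(\lambda)$ really is the block of rightmost bottom-row columns, is the one genuinely computational point; once it is in place, everything else is a translation between ``bottom-row ribbon,'' ``tail in $col_{n-1}(\lambda)$,'' and ``strictly growing bottom row,'' plus the small verification (illustrated by the height-one copies of $S^i$) that the higher ribbon copies in the forward direction meet the head condition defining a horizontal ribbon strip.
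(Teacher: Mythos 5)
The pivotal step of your argument --- the identification $col_{n-1}(\lambda)=\{\lambda_1+1,\dots,\lambda_1+n-1\}$, i.e.\ the last $n-1$ columns of the \emph{bottom} row of $R(n-1,\lambda)$ --- is not what Definition~\ref{rstrongstrip}'s surrounding text prescribes, and you yourself flagged this reconciliation as the one genuinely computational point. The definition takes the \emph{highest} row $m$ of $\eta=\kbnd(\lambda)\cup R_{n-1}$ having length $n-1$ and records the columns of the last $n-1$ cells of row $m$ of $\core(\eta)=R(n-1,\lambda)$; writing $p$ for the number of parts of $\kbnd(\lambda)$ equal to $n-1$, this is row $p+1$, which coincides with the bottom row only when $p=0$. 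For instance, with $n=3$ and $\lambda=(2)$ one has $\kbnd(\lambda)=(2)$, $\eta=(2,2)$, $R(2,\lambda)=(4,2)$, and $col_2(\lambda)=\{1,2\}$ (the columns of the second row), not $\{3,4\}$. Your ``dictionary'' therefore does not follow from the stated justification: a ribbon whose tail lies in $col_{n-1}(\lambda)$ sits in row $p+1$, where your observation that columns beyond $\lambda_1$ meet only the bottom row says nothing; and in the forward direction the bottom-row copy of $S^i$ supplied by Lemma~\ref{lem:hstrip} does not have its tail in $col_{n-1}(\lambda)$ when $p>0$. Since both implications, including the inequality $\nu_1\ge\lambda_1$ you use to invoke Theorem~\ref{prop:main}, rest on this identification, the proof as written has a genuine gap.

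The equivalence you want --- ``some tail of $\nu^{(i)}/\nu^{(i-1)}$ lies in $col_{n-1}(\lambda)$'' iff ``the bottom row of $\nu^{(i)}$ is strictly longer than that of $\nu^{(i-1)}$'' --- is true, but establishing it is exactly where the work lies. The last $n-1$ cells of each of rows $1,\dots,p+1$ of $R(n-1,\lambda)$ all lie at the top of their columns and carry the same residues $\lambda_1,\dots,\lambda_1+n-2\pmod n$; because a strong cover inserts or deletes \emph{all} removable copies of one fixed ribbon (Lemmas~\ref{lem:rib} and~\ref{lem:tau}, together with Property~\ref{thecoreprop}), a removable copy with tail among those cells of row $p+1$ forces a height-one copy in the bottom row, and conversely a bottom-row ribbon of the chain forces a copy in row $p+1$; one also reads off $\lambda\subset\nu$ from the residues of the tail of the first added ribbon. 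That translation-by-residue argument is what is missing from your write-up. If you insert it, your idea of routing the converse through $\psi$ and Theorem~\ref{prop:main} is a workable alternative to the direct iteration, but it is not a shortcut around the $col_{n-1}(\lambda)$ computation.
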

\begin{proof}
Let $p$ be the number of rows of length $n-1$ in $\kbnd(\lambda)$
and thus the bottom $p+1$ rows of $\kbnd(\lambda)\cup R_{n-1}$ have 
length $n-1$.  Note by definition of $\core$ 
that the last $n-1$ cells in rows $1,\ldots,p+1$ of
$\core(\kbnd(\lambda)\cup R_{n-1})$ lie at the top
of a column and 
have residues $\lambda_1,\lambda_1+1,\ldots,\lambda_1+(n-2)$ (mod $n$).  
Further, $col_{n-1}(\lambda)$ is defined by taking the 
last $n-1$ columns in row $p+1$.

Assume $(\lambda,\nu)$ is a horizontal strong strip.
Lemma~\ref{lem:hstrip} implies that $\nu^{(j)}/\nu^{(j-1)}$ consists 
of all copies of $S^j$ that can be removed from $\nu^{(j)}$ where
$S^j$ is a removable ribbon in the bottom row of $\nu^{(j)}$.
For $j=m$, the discussion in the previous paragraph
implies that a copy of $S^m$ lies in row $p+1$.
By iteration, a copy of $S^j$ (and in particular, its tail)
lies in the last $n-1$ columns of row $p+1$ for all $j=1,\ldots,m$.

On the other hand, consider a chain of $n$-cores
$\nu=\nu^{(0)}\lessdot_B\nu^{(1)}\lessdot_B\cdots\lessdot_B\nu^{(m)}=
R(n-1,\lambda)$ where the tail of a ribbon $S^j$ in $\nu^{(j)}/\nu^{(j-1)}$ 
lies in one of the last $n-1$ columns in row $p+1$ of $R(n-1,\lambda)$.
Since the number of cell in $S^j$ is smaller than $n$ and
there are $n-1$ cells at the top of a column in row $p$,
$S^m$ must have height one.  Therefore, it can be removed from
every row $1,\ldots,p+1$ of $R(n-1,\lambda)$.
By iteration, there is a copy of $S^j$ in the bottom row of 
$(n-1+\lambda_1,\lambda)$ for $j=1,\ldots,m$.
Since the tail of $S^1$ is on of the last $n-1$ cells
in row $p+1$ of residue $\lambda_1,\ldots,\lambda_1+(n-2)\mod n$,
$\lambda\subset\nu$.
\end{proof}

\begin{proposition}
For $1\leq r<n$ and for $n$-cores $\lambda$ and $\nu$,
a ribbon strong strip $(\lambda,\nu)$ with respect to $r$
has length one 
if and only if $\nu\lessdot_B R(r,\lambda)$ and 
$\nu_i< R(r,\lambda)_i$ for some $i$ such that
$(\lambda\cup R_r)_i=r$.
\end{proposition}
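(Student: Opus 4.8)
The plan is to unwind the definition and reduce everything to a single clean equivalence. Since a length‑one ribbon strong strip is a chain with one step, it is exactly a single strong cover \(\nu\lessdot_B R(r,\lambda)\) together with (a) the horizontal‑ribbon‑strip (head) condition on \(R(r,\lambda)/\nu\) and (b) the requirement that a ribbon tail lie in \(col_r(\lambda)\). First I would dispose of (a): by Lemma~\ref{lem:rib} the skew \(R(r,\lambda)/\nu\) is a union of copies of one fixed ribbon, and for each copy either its head sits in the bottom row or the cell directly below the head lies in \(\nu\). The latter is automatic, since the below‑head cell is not removed (else the head would not be southeasternmost), hence lies in \(\nu\) unless the partition condition is violated; and no other copy can have shrunk that row past the head's column, because two copies share a tail/head residue and a quick residue count contradicts the no‑\(2\times 2\) shape of a ribbon. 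With (a) free, the proposition collapses to: for \(\nu\lessdot_B R(r,\lambda)\), a ribbon tail lies in \(col_r(\lambda)\) if and only if \(\nu_i<R(r,\lambda)_i\) for some \(i\) with \((\lambda\cup R_r)_i=r\).

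Next I would fix notation for the rectangle band. Writing \(\eta=\lambda\cup R_r\), the rows of \(\eta\) equal to \(r\) form a contiguous block whose top row is \(p\), and \(col_r(\lambda)\) is the set of columns of the last \(r\) cells of row \(p\) of \(\core(\eta)=R(r,\lambda)\); the rows on the right‑hand side are exactly the core rows indexed by this band. The key structural input I would establish is a hook‑length fact: by definition of \(\core\), row \(p\) of \(R(r,\lambda)\) carries exactly \(r\) cells of hook‑length \(<n\), namely its last \(r\) cells (this is precisely why they single out \(col_r(\lambda)\)), and a short count then forces \(R(r,\lambda)_{p+1}\ge R(r,\lambda)_p-r\). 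Thus the columns of \(col_r(\lambda)\) are exactly the columns in which row \(p\) can be shortened. Alongside this I would record the residues along the right edge of the band and invoke Property~\ref{thecoreprop} for extremal cells of equal residue, together with Lemma~\ref{lem:tau}, which guarantees that a single cover deletes \emph{all} removable copies of its ribbon.

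For the forward direction (some band row shrinks \(\Rightarrow\) tail in \(col_r(\lambda)\)), I would argue that if a band row shrinks then, because all copies are removed and the band is residue‑homogeneous, a copy of the ribbon is also removed from the top band row \(p\); the inequality \(R(r,\lambda)_{p+1}\ge R(r,\lambda)_p-r\) then pins the tail of that copy among the last \(r\) cells of row \(p\), i.e.\ in \(col_r(\lambda)\). For the converse, if a tail sits in \(col_r(\lambda)\), the core constraint (no hook of length \(n\)) forbids the ``lone'' corner removals that would otherwise shrink only non‑band rows, and Property~\ref{thecoreprop} applied to the band's extremal cells forces the deletion to shorten a row of length \(r\); in the small cases this is exactly the statement that the only validly removable ribbons meeting \(col_r(\lambda)\) are those shrinking a band row.

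I expect the main obstacle to be precisely this structural analysis of the removable ribbons near the band. Unlike the already‑treated case \(r=n-1\), the last \(r\) cells of row \(p\) need not lie at the tops of their columns and the ribbons may have height greater than one, so the coupling between ``a length‑\(r\) row shrinks'' and ``the tail lands in \(col_r(\lambda)\)'' is enforced by the \(n\)-core condition rather than by naive row‑length geometry. The crux is to prove that validity as an \(n\)-core cover makes a ribbon removable from one band row persist up to row \(p\) with its tail in the designated columns, while no cover deleting only cells outside the band can place a tail in \(col_r(\lambda)\); Property~\ref{thecoreprop} together with the hook‑length count of the band is the tool I would lean on to make this rigorous.
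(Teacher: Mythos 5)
Your reduction is sound: for a single strong cover the ``heads lie above a cell of $\nu$ or in the bottom row'' condition is automatic, so the proposition does collapse to the equivalence, for $\nu\lessdot_B R(r,\lambda)$, between \emph{some ribbon tail of $R(r,\lambda)/\nu$ lies in $col_r(\lambda)$} and \emph{$\nu_i<R(r,\lambda)_i$ for some $i$ with $(\lambda\cup R_r)_i=r$}. That is exactly the equivalence the paper proves. The problem is that neither implication of this equivalence is actually established in your write-up, and you concede as much (``I expect the main obstacle to be precisely this structural analysis\dots the crux is to prove\dots''). Concretely: in one direction you assert that ``because all copies are removed and the band is residue-homogeneous, a copy of the ribbon is also removed from the top band row,'' and that the inequality $R(r,\lambda)_{p+1}\ge R(r,\lambda)_p-r$ ``pins the tail'' among the last $r$ cells of that row --- neither claim is argued, and the second is stated too strongly, since $col_r(\lambda)$ is a set of \emph{columns} and the relevant tail need not lie in the top band row at all (the paper explicitly allows it as high as row $m+(n-1-r)$). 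In the other direction, ``the core constraint forbids the lone corner removals\dots in the small cases this is exactly the statement that\dots'' is a restatement of what must be shown, not a proof of it.

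The missing ingredient is the structural fact the paper imports as Proposition~9 of \cite{[LMrec]}: from an extremal cell of a fixed residue one can locate another extremal cell of the same residue in a prescribed region of the core, after which Property~\ref{thecoreprop} assembles a removable copy of the same ribbon there and Lemma~\ref{lem:tau} guarantees that this copy is also deleted by the cover. The paper invokes this once in each direction --- to push a ribbon whose cells all lie in the columns of $col_r(\lambda)$ above row $m$ down into the rows $m-(n-1-r),\dots,m$ of length $r$, and conversely to transport a tail sitting outside $col_r(\lambda)$ to an extremal cell of the same residue inside those columns. Your proposed substitutes (the hook-length count of the band and an appeal to Property~\ref{thecoreprop} ``to make this rigorous'') point at the right tools but are never carried out, so both directions remain assertions; as written the proposal identifies the crux of the proof without supplying it.
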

\begin{proof}
For $\eta\in\mathcal P^n$, let $\lambda=\core(\eta)$ and 
let $m$ be the highest row of length $r$ in $R_r\cup \eta$.
Note that the highest cell of $R(r,\lambda)$ in the leftmost 
column of $col_r(\lambda)$ lies in a row no higher than row 
$m+(n-1-r)$ and that rows $m,m-1,\ldots,m-(n-1-r)$ of 
$R_r\cup \eta$ have length $r$.

Given $(\lambda,\nu)$ is a ribbon strong strip with respect to $r$
of length one, $\nu\lessdot_B R(r,\lambda)$ and the tail of a
ribbon $S\subset R(r,\lambda)/\nu$ lies in a column of $col_r(\lambda)$.
Suppose the head of $S$ lies in row $a$.  If $a\leq m$,
then $\nu\lessdot_B R(r,\lambda)$ and 
$\nu_i< R(r,\lambda)_i$ for some $i$ such that
$(\lambda\cup R_r)_i=r$.  When $a>m$, all cells of $S$ 
lie in colums of $col_r(\lambda)$ and therefore in rows
between $m+1$ and $m+n-1-r$.  Proposition 9~\cite{[LMrec]} 
ensures that a removable copy of $S$ also lies in rows 
$m-(n-1-r),\ldots,m-1,m$, and the forward direction thus follows 
from Lemma~\ref{lem:tau}.

On the other hand, consider $n$-cores $\lambda$ and $\nu$ such that
$\nu_i \lessdot_B R(r,\lambda)_i$ for some $i$ where $(\lambda \cup R_r)_i = r$.  
In particular, there is a ribbon $S\subset R(r,\lambda)/\nu$ containing 
at least one cell in row $i$.  If the tail $t$ of $S$ is not in a column 
of $col_r(\lambda)$ then $t$ lies in row $i<m$.  By Proposition 9~\cite{[LMrec]}, 
there is an extremal cell $\tilde{t}$ of the same residue as $t$
that lies in a column of $col_r(\lambda)$.  Consider the subset $\tilde S$ 
of extremal cells in $R(r,\lambda)$ that is formed by taking 
all extremal cells between $\tilde t$ (as the highest) and a cell
$\tilde h$ of the same residue as the head of $S$.
Since $S$ can be removed from $R(r,\lambda)$, its head
lies at the end of its row. Property~\ref{thecoreprop} then implies
that $\tilde h$ is at the end of its row and thus
$\tilde S$ is a removable ribbon. By Lemma~\ref{lem:tau},
$\tilde S\subset R(r,\lambda)/\nu$ proving the claim.
\end{proof}

\bibliographystyle{alpha}
\bibliography{avi}
\label{sec:biblio}

\end{document}